\newcolumntype{L}[1]{>{\raggedright\let\newline\\\arraybackslash\hspace{0pt}}m{#1}}
\newcolumntype{C}[1]{>{\centering\let\newline\\\arraybackslash\hspace{0pt}}m{#1}}
\newcolumntype{R}[1]{>{\raggedleft\let\newline\\\arraybackslash\hspace{0pt}}m{#1}}
\newtheorem{theorem}{Theorem}[section]
\newtheorem{lemma}[theorem]{Lemma}
\newtheorem{proposition}[theorem]{Proposition}
\newtheorem{definition}[theorem]{Definition}
\newenvironment{proof}{{\flushleft \emph{Proof}:}}{\hfill$\blacksquare$}
\newcommand{\g}{\frakg}
\newcommand{\h}{\frakh}
\newcommand{\X}{\frakX}
\newcommand{\hn}{\frakn}
\newcommand{\euc}{\frake}
\newcommand{\R}{\bbR}
\newcommand{\M}{\calM}
\newcommand{\N}{\calN}
\newcommand{\E}{\calE}
\newcommand{\Vol}{\text{Vol}}
\newcommand{\dVol}{\textup{d}\text{Vol}}
\newcommand{\Volume}{\textup{d}\text{Vol}_\g}
\newcommand{\Emph}[1]{{\bfseries #1}}
\newcommand{\tM}{\tilde{\calM}}
\newcommand{\tE}{\tilde{\E}}
\newcommand{\vp}{\varphi}
\newcommand{\e}{\varepsilon}
\newcommand{\fTN}{f^*T\N}
\newcommand{\nabfTN}{\nabla^{\fTN}}
\newcommand{\Imm}{\operatorname{Imm}}
\newcommand{\wc}{\rightharpoonup}
\newcommand{\cofop}{\operatorname{cof}}
\renewcommand{\O}{\operatorname{O}}
\newcommand{\SO}{\operatorname{SO}}
\newcommand{\II}{\operatorname{II}}
\newcommand{\rank}{\operatorname{rank}}
\newcommand{\dist}{\operatorname{dist}}
\newcommand{\Hom}{\operatorname{Hom}}
\newcommand{\id}{{\operatorname{Id}}}
\newcommand{\image}{{\operatorname{Image}}}
\newcommand{\ip}[1]{\langle #1 \rangle}
\newcommand{\txi}{\tilde{\xi}}
\def\Xint#1{\mathchoice
{\XXint\displaystyle\textstyle{#1}}%
{\XXint\textstyle\scriptstyle{#1}}%
{\XXint\scriptstyle\scriptscriptstyle{#1}}%
{\XXint\scriptscriptstyle\scriptscriptstyle{#1}}%
\!\int}
\def\XXint#1#2#3{{\setbox0=\hbox{$#1{#2#3}{\int}$ }
\vcenter{\hbox{$#2#3$ }}\kern-.6\wd0}}
\newcommand{\dashint}{\Xint-}
\newcommand{\textand}{\quad\text{ and }\quad}
\newcommand{\Textand}{\qquad\text{ and }\qquad}
\newcommand{\brk}[1]{\left(#1\right)}          
\newcommand{\BRK}[1]{\left\{#1\right\}}        
\newcommand{\beq}{\begin{equation}}
\newcommand{\eeq}{\end{equation}}
\newcommand{\limn}{\lim_{n\to\infty}}
\newcommand{\secref}[1]{Section~\ref{#1}}
\newcommand{\thmref}[1]{Theorem~\ref{#1}}
\newcommand{\propref}[1]{Proposition~\ref{#1}}
\newcommand{\calE}{{\mathcal E}}
\newcommand{\calM}{{\mathcal M}}
\newcommand{\calN}{{\mathcal N}}
\newcommand{\calU}{{\mathcal U}}
\newcommand{\calV}{{\mathcal V}}
\newcommand{\frakX}{\mathfrak{X}}
\newcommand{\frake}{\mathfrak{e}}
\newcommand{\frakg}{\mathfrak{g}}
\newcommand{\frakh}{\mathfrak{h}}
\newcommand{\frakn}{\mathfrak{n}}
\newcommand{\frakx}{\mathfrak{x}}
\newcommand{\bbR}{{\mathbb R}}
\numberwithin{equation}{section}
\begin{document}

\title{Stability of isometric immersions of hypersurfaces}

\author{Itai Alpern\footnote{Einstein Institute of Mathematics, The Hebrew University, Jerusalem, Israel
}
\and Raz Kupferman\footnotemark[1] 
\and   Cy Maor\footnotemark[1] 
}
\date{}
\maketitle

\begin{abstract}
We prove a stability result of isometric immersions of hypersurfaces in Riemannian manifolds, with respect to $L^p$-perturbations of their  fundamental forms: 
For a manifold $\M^d$ endowed with a reference metric and a reference shape operator, we show that a sequence of immersions $f_n:\M^d\to\N^{d+1}$, whose pullback metrics and shape operators are arbitrary close in $L^p$ to the reference ones, converge to an isometric immersion having the reference shape operator.
This result is motivated by elasticity theory and generalizes a previous result \cite{AKM22} to a general target manifold $\N$, removing a constant curvature assumption.
The method of proof differs from that in \cite{AKM22}: it extends a Young measure approach that was used in codimension-0 stability results, together with an appropriate relaxation of the energy and a regularity result for immersions satisfying given fundamental forms.
In addition, we prove a related quantitative (rather than asymptotic) stability result in the case of Euclidean target, similar to \cite{CMM19b} but with no a-priori assumed bounds.
\end{abstract}

{\bfseries Keywords}: Rigidity,
Riemannian manifolds, Isometric immersions, Non-Euclidean elasticity

{\footnotesize{
\tableofcontents
}}

\section{Introduction}

\paragraph{Background: rigidity in codimension 0}
In 1850, Liouville proved that isometries in $\R^d$ are rigid \cite{Lio50}: if $\Omega \subset \R^d$ is open and connected and $f\in C^1(\Omega ; \R^d)$, and if for every $x\in \Omega$, $df(x)\in \SO(d)$, then, in fact, $f$ is a rigid motion, $f(x) = Qx+b$ for some $Q\in \SO(d)$ and $b\in \R^d$.
This theorem has been generalized over the years in a number of ways:
\begin{itemize}
\item \Emph{Regularity}: Reshetnyak showed that the theorem holds for Lipschitz maps \cite{Res67b}. 
This is essentially a regularity result, as the proof proceeds by showing that such Lipschitz maps are harmonic, hence smooth.
(Reshetnyak's theorem can be reformulated as follows: if a Lipschitz map is a.e. orientation-preserving and pulls backs a smooth metric, then it is smooth.)
\item \Emph{Asymptotic stability}: Reshetnyak also showed that this rigidity is stable: if $\dist(df_n, \SO(d)) \to 0$ in $L^p$ for some $p\in [1,\infty)$, then, modulo a subsequence and translations, $f_n\to Qx+b$ in $W^{1,p}(\Omega;\R^d)$ \cite{Res67}.
\item \Emph{Quantitative stability}: Friesecke, James and M\"uller (FJM) (\cite{FJM02b}, see also \cite[Section~2.4]{CS06}) proved a quantitative version of this theorem: for $p\in (1,\infty)$, there exists $C=C(\Omega,p)$ such that there exists for every $f\in W^{1,p}(\Omega;\R^d)$ a rigid map $\bar{f}(x) = Qx+b$, such that
\[
\|f-\bar{f}\|_{W^{1,p}(\Omega;\R^d)} \le C \|\dist(df, \SO(d))\|_{L^p(\Omega)}.
\]
\end{itemize}
There are many generalizations in other directions (e.g., for conformal maps, multiple energy wells, incompatible fields), which are beyond the scope of this short discussion.
In the context of Riemannian geometry, Liouville's and Reshetnyak's theorems generalize to maps between two compact oriented Riemannian manifolds $(\M,\g)$ and $(\N,\h)$ of the same dimension. In this context we denote by $\SO(\g_q,\h_{q'})$ the space of orientation-preserving isometries $T_q\M\to T_{q'}\N$ (or simply by $\SO(\g,\h)$, when no confusion may arise).
\begin{itemize}
\item \Emph{Rigidity}: The following well-known fact can be thought of as a generalization of Liouville's theorem: if $f\in C^2(\M;\N)$ satisfies $df_q \in \SO(\g_q,\h_{f(q)})$ for every $q\in \M$, then $f$ is rigid, in the sense that it is determined by its value and the value of its differential at a single point:
Every $q\in\M$ has an open neighborhood $U\ni q$, such that 
\[
f|_U = \exp^\h_{f(q)} \circ df_q \circ (\exp^\g_q)^{-1},
\]
where $\exp^\g$ and $\exp^\h$ are the exponential maps in $\M$ and $\N$.
\item \Emph{Regularity}: If $f:\M\to \N$ is Lipschitz and satisfies $df \in \SO(\g,\h)$ almost everywhere, then $f$ is a smooth isometric immersion and the previous statement applies.
This result (or variants of it) has been proved several times, using various techniques \cite{Res78,Res94,LP11,LS14,KMS19}.
\item \Emph{Asymptotic stability}: 
It was shown in \cite{KMS19}  that if $f_n\in W^{1,p}(\M;\N)$ satisfy $\dist(df_n,\SO(\g,\h))\to 0$ in $L^p(\M)$, then (modulo a subsequence) $f_n$ converges strongly in $W^{1,p}$ to a smooth isometric immersion $f:\M\to \N$ (in particular, such  an isometric immersion exists).
\end{itemize}
A generalization of the FJM quantitative stability result to a Riemannian setting is yet missing, and even its formulation as a conjecture is not obvious.
A particular case concerning mappings between round spheres appear in \cite[Theorem~3.2]{CLS22}; a stronger version of that result can in fact be deduced from the Euclidean result and will be proved in a forthcoming paper \cite{KM24}.

The above results have direct relevance to elasticity theory: 
In the Euclidean settings $\|\dist(df, \SO(d))\|_{L^p}^p$ is a prototypical model for the elastic energy of a strained elastic solid, hence these theorems relate the smallness of the elastic energy to being close to a zero energy state; essentially, any rigorous derivation of a low-energy limit in elasticity uses these results.
The Riemannian case arises in non-Euclidean elasticity, which is an elastic theory for pre-stressed bodies (see, e.g., \cite{KES07,ESK09a,ESK13,LM22}).  
The asymptotic stability result in \cite{KMS19}, for example, implies that (as expected by physicists) if $\M$ cannot be immersed isometrically in $\N$,
then the infimal elastic energy is positive, and the elastic body cannot release all of it stresses by, say, forming microstructures. 

\paragraph{Main result: asymptotic stability of isometric immersions in codimension-1}
In this paper we are concerned with codimension-1 versions of the above results.
The classical theorem in this context (for a Euclidean target) is the fundamental theorem of surface theory (see, e.g., \cite{Ten71}):
\begin{quote}
Let $(\M,\g)$ be an oriented, connected, simply-connected, compact $d$-dimensional Riemannian manifolds with Lipschitz boundary. 
Let $S$ by a smooth symmetric $(1,1)$ tensor field on $\M$. 
If $\g$ and $S$ satisfy the Gauss-Codazzi compatibility conditions, then there exists a smooth isometric immersion $f:\M\to\R^{d+1}$ having shape operator $S$. 
This immersion is unique modulo a composition with a Euclidean rigid map.
\end{quote}

We recall that for an immersion $f:\M\to \R^{d+1}$, we can define the \emph{Gauss map} $\hn_f:\M \to T\R^{d+1}\simeq \R^{d+1}$, where $\hn_f(q)$ is the unique unit vector such that $(df_q(u_1),\ldots,df_q(u_d),\hn_f(q))$ is an oriented basis of $\R^{d+1}$ for any oriented basis $(u_1,\ldots,u_d)$ of $T_q(\M)$.
The \emph{second fundamental form} of an immersion is a $(2,0)$ tensor which is given by $\II_f = -df^{T}\nabla \hn_f : T\M\times T\M \to \R$; it contains the same information as the \emph{shape operator} $S_f:T\M\to T\M$, a (1,1) tensor implicitly defined via $\nabla \hn_f = -df \circ S_f$ (which is well defined since the image of $\nabla \hn_f$ is normal to $\hn_f$, and thus in the image of $df$).
Thus, given a metric $\g$ and a (1,1) tensor $S$, $f:\M\to\R^{d+1}$ is an isometric immersion having a shape operator $S$ if
\beq\label{eq:isometric_immersion_Euc}
df\in \O(\g,\euc) \textand \nabla\hn_f = -df\circ S,
\eeq
where $\euc$ is the Euclidean metric, and $\O(\g_q,\euc)$ is the set of isometric linear maps $T_q\M \to \R^{d+1}$.

The last clause of the fundamental theorem of surface theory is, again, a rigidity statement: An immersion $f:\M\to\R^{d+1}$ satisfying \eqref{eq:isometric_immersion_Euc} is determined by the values of $f(q)$ and $df_q$ at any $q\in \M$.
This rigidity statement generalizes to any target $(d+1)$-dimensional manifold $(\N,\h)$, by replacing in \eqref{eq:isometric_immersion_Euc} $\euc$ with $\h$ and $\nabla$ with $\nabla^{f^*T\N}$, the pullback connection of the Levi-Civita connection of $(\N,\h)$, which is the natural derivative of vector fields in $\N$ along $f$, i.e., maps that take $p\in \M$ to a vector in $T_{f(p)}\N$.\footnote{The uniqueness of an isometric immersion $f:\M\to \N$ having a given shape operator $S$ and given initial data $f(q)$ and $df_q$, follows from integrating along geodesics of $\M$: Given $q'\in \M$, let $\gamma$ be a geodesic from $q$ to $q'$. Choose a parallel orthonormal frame $(u_1,\ldots,u_d)$ along $\gamma$, with $u_1 = \dot \gamma$. 
Then the image of $\gamma$ and the frame under $f$ is obtained uniquely by solving the system $\frac{D}{dt}e_i = S_i^1 \hn$, $\frac{D}{dt}\hn = -S_1^i e_i$, where $e_i$ is the image of $u_i$, $\hn$ is their normal, and $\frac{D}{dt}$ is the covariant derivative in $(\N,\h)$. }

The main focus of this article is the asymptotic stability of isometric immersions.
The codimension-0 property of being $L^p$-close to an orientation-preserving isometry is replaced by being $L^p$-close to first and second fundamental forms. 
For $f:\M\to \N$ we define
\beq
\E_p(f) = 
\|\dist^p(df,\O(\g,\h))\|_{L^p(\M)}^p + \|\nabla^{f^*T\N} \hn_f + df\circ S\|_{L^p(\M)}^p.
\label{eq:Etotal}
\eeq 
The first term, denoted $\E_p^S(f)$, is called a \Emph{stretching energy} and the second term, denoted $\E_p^B(f)$, is called a \Emph{bending energy}.
As these name suggest, this energy is motivated by the elastic theory of thin bodies.
There, one often compares the second fundamental forms ($(2,0)$ tensors) rather than the shape operators ($(1,1)$ tensors); from a physical point of view these energies are equivalent, and using the shape operator is more convenient from a calculus of variations point of view. 
See \cite{AKM22} for further details and implications to elasticity theory.

The natural space on which the energy \eqref{eq:Etotal} is defined and finite is the space of \Emph{$p$-Sobolev immersions},
\beq
\label{eq:imm_p}
\Imm_p(\M;\N) = \{f\in W^{1,p}(\M;\N) ~:~ \text{$\rank df = d$ a.e. and $\hn_f\in W^{1,p}(\M;T\N)$}\},
\eeq
where $\hn_f$ is the unit normal vector field in $\N$ along $f$, which is defined a.e.
Note that for such non-smooth maps, the pull-back bundle $f^*T\N$ is a non-smooth bundle; we therefore define $\nabla^{f^*T\N} \hn_f$ using a connector operator, as explained in the next section.

Our main result is the following:
\begin{theorem}
\label{thm:main}
Suppose that there exists a sequence of $p$-immersions $f_n \in \Imm_p(\M;\N)$, $p\ge 1$, satisfying 
\[
\limn \E_p(f_n) \to 0.
\] 
Then there exists a subsequence of $f_n$ converging in $W^{1,p}(\M;\N)$ to a smooth isometric immersion $f:(\M,\g) \to (\N,\h)$. 
Furthermore, $\hn_{f_n} \to \hn_f$ in $W^{1,p}(\M;T\N)$, and the shape operator of the limit equals the reference shape operator, $\nabfTN\hn_f = - df\circ S$.
\end{theorem}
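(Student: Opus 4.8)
The plan is to run a compactness–lower-semicontinuity argument in the spirit of the Young measure approach to codimension-0 rigidity, combined with the regularity result for immersions with prescribed fundamental forms. First I would establish the uniform bounds: from $\limn \E_p(f_n)=0$ and the structure of $\E_p^S$, the differentials $df_n$ are bounded in $L^p$ (since $\O(\g,\h)$ is uniformly bounded), so $f_n$ is bounded in $W^{1,p}(\M;\N)$; likewise $\E_p^B(f_n)\to 0$ together with $\|df_n\|_{L^p}$ bounded and $|\hn_{f_n}|_\h=1$ gives a bound on $\nabla^{f_n^*T\N}\hn_{f_n}$ in $L^p$, hence $\hn_{f_n}$ is bounded in $W^{1,p}(\M;T\N)$. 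Passing to a subsequence, $f_n \wc f$ in $W^{1,p}$ and (after a further subsequence) $f_n\to f$ strongly in $L^p$ and a.e., and similarly $\hn_{f_n}\wc \nu$ in $W^{1,p}$ with $\hn_{f_n}\to\nu$ a.e.; by compact Sobolev embedding one gets strong $L^p$ convergence of both sequences and convergence of the normalization $|\nu|_\h=1$ a.e.

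Next I would identify the limit. Using the a.e. convergence $f_n\to f$ and lower semicontinuity of the (convex, or polyconvex after the appropriate relaxation) stretching functional $f\mapsto \|\dist^p(df,\O(\g,\h))\|_{L^p}^p$ under weak $W^{1,p}$ convergence — this is exactly where the Young-measure relaxation of the energy alluded to in the introduction is needed, to handle the non-convexity of $\O(\g,\h)$ and to rule out oscillations — one concludes $\dist(df,\O(\g,\h))=0$ a.e., i.e. $df\in\O(\g,\h)$ a.e., so $f$ pulls back $\h$ to $\g$ and in particular $\rank df=d$ a.e. Then $\nu$ must be (up to sign, which is fixed by orientation and a.e. convergence) the Gauss map $\hn_f$ of $f$ in the weak sense. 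For the bending term I would show that $\nabla^{f_n^*T\N}\hn_{f_n} + df_n\circ S \to 0$ strongly in $L^p$ forces, in the limit (interpreting the pullback connection via the connector operator as in the next section, and using strong $L^p$ convergence of $df_n$ and $\hn_{f_n}$ plus weak convergence of their derivatives), the identity $\nabla^{f^*T\N}\hn_f = -df\circ S$ in the weak/distributional sense.

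At this point I would invoke the regularity result for immersions satisfying given fundamental forms (stated as available earlier in the paper): a $W^{1,p}$ immersion $f$ with $df\in\O(\g,\h)$ a.e. and weak shape operator equal to the smooth tensor $S$ is in fact a smooth isometric immersion, and the rigidity statement from the fundamental theorem of surface theory (generalized to the target $\N$, as recalled in the excerpt) applies. Finally I would upgrade weak to strong convergence: since $df_n\in\O(\g,\h)$ in the limit and $\dist(df_n,\O(\g,\h))\to 0$ in $L^p$, and similarly for $\hn_{f_n}$, one gets $|df_n|_\g^p \to |df|_\g^p$ in $L^1$ (the pointwise norms are controlled by the distance to $\O(\g,\h)$, whose elements have fixed norm), which combined with weak $W^{1,p}$ convergence and uniform convexity of $L^p$ (for $p>1$; for $p=1$ one argues via equi-integrability and Vitali, using that the limit is smooth) yields $f_n\to f$ strongly in $W^{1,p}(\M;\N)$; the same argument with the bending term gives $\hn_{f_n}\to\hn_f$ strongly in $W^{1,p}(\M;T\N)$.

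The main obstacle I expect is the lower-semicontinuity/no-oscillation step for the stretching energy: $\O(\g,\h)$ is a non-convex set (a sphere-like manifold of isometries), so one cannot directly use weak lower semicontinuity of $\|\dist^p(\cdot,\O(\g,\h))\|_{L^p}^p$; this is precisely why a Young-measure formulation and an appropriate relaxed energy are needed, and making that relaxation interact correctly with the pullback-bundle structure of the bending term (which couples $df_n$ and $\hn_{f_n}$ nonlinearly through $\nabla^{f_n^*T\N}$) is the delicate technical heart of the argument. A secondary difficulty is handling the non-smooth pullback bundle $f_n^*T\N$ uniformly in $n$ via the connector operator so that the limiting identities make sense, and treating the endpoint $p=1$ where uniform convexity of $L^p$ fails.
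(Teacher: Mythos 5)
There is a genuine gap at the heart of your argument: the step where you pass from $\E_p^S(f_n)\to 0$ and $f_n\wc f$ to $df\in\O(\g,\h)$ a.e. You correctly flag this as the main obstacle, but you do not supply the mechanism, and ``lower semicontinuity of the (convex, or polyconvex after the appropriate relaxation) stretching functional'' is not it: the integrand $A\mapsto\dist^p(A,\O(\g,\h))$ acts on non-square maps $T_q\M\to T_{f(q)}\N$, and no minors/polyconvexity structure is directly available for the Stiefel-type set $\O(\g,\h)$. The paper's resolution is to augment the differential with the normal, forming $Df\oplus\hn_f$ (more generally $Df_\xi\oplus\xi$ for an arbitrary director field $\xi$) and measuring its distance to $\SO(G,\h)$, a set of orientation-preserving isometries between spaces of equal dimension $d+1$. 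Only then do the classical tools apply: weak continuity of $\cofop$ and $\det$ shows that the Young measure is supported on $\SO(G,\euc)$ and that its barycenter, which is the weak limit $R\circ(Df_\xi\oplus\xi)$, itself lies in $\SO(G,\euc)$; strict convexity of $\SO$ then collapses the measure to a Dirac mass, and it is this collapse (via a truncated test function) -- not merely convergence of norms -- that the paper uses to upgrade to strong convergence. Without this device your limit identification does not go through.

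A second omission is the case $p\le d$ (which includes the physically central case $p=2$): Sobolev maps into manifolds are then not continuous, so the local-coordinate/Young-measure argument cannot be run directly. The paper handles this by a Lipschitz truncation of the normal fields, and this is the actual reason the energy is relaxed from $\Imm_p(\M;\N)$ to all of $W^{1,p}(\M;T\N)$: truncation destroys the immersion property, so one must work with an energy defined on arbitrary director fields. Your reading of the relaxation as a convexification of the stretching integrand misses both of its purposes. The remaining ingredients of your outline (uniform bounds, identification of the bending term via the connector, invoking the regularity theorem, a Radon--Riesz-type upgrade for $p>1$) are consistent with the paper, but they rest on the unproven core step.
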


This result was obtained in \cite{AKM22} for the case where $(\N,\h)$ has constant sectional curvature.
The proof there was essentially based on reducing the problem to the codimension-0 problem, by ``thickening" $(\M,\g)$ into a $(d+1)$-dimensional manifold, $(\M\times[-h,h],G)$ for some $h>0$, extending $f$ into a map $F: \M\times[-h,h]\to \N$, and using the non-Euclidean version \cite{KMS19} of Reshetnyak's stability theorem in codimension-0.
This approach has difficulties generalizing beyond constant sectional curvature, since we exploited in constant curvature the fact that the metric $G$ is uniquely determined by $\g$ and $S$, independently of the maps $f_n$. 

We overcome this by using a different approach. 
As a first step, we need the following regularity theorem:
\begin{theorem}
\label{thm:reg}
Let $f\in \Imm_\infty(\M;\N)$
such that the first fundamental form $f^*\h$ and the shape operator $S_f$, implicitly defined by\footnote{
As in the Euclidean target case, the shape operator $S_f:T\M\to T\M$, is well defined by the definition of $\hn_f$ as a unit vector normal to the image of $df$. 
Note that linear maps in $T\M$ are canonically identified with $(1,1)$ tensors, i.e., as sections of $T^*\M\otimes T\M$.
}
\beq\label{eq:S_f}
\nabla^{f^*T\N} \hn_f = -df\circ S_f
\eeq
are smooth.
Then, $f$ is smooth.
\end{theorem}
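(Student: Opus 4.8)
The plan is to work locally and bootstrap regularity for the pair $(f, \hn_f)$ viewed as a map into $\N \times \N$ (or, more precisely, into the unit sphere bundle of $T\N$) that solves an elliptic PDE system. First I would fix a point $q_0 \in \M$ and choose local coordinates $x^1,\dots,x^d$ on a neighborhood $U$ of $q_0$, together with a local orthonormal frame and normal coordinates on the target $\N$ around $f(q_0)$. Since $f \in \Imm_\infty$, the map $f$ is Lipschitz and $df$ has full rank a.e.; the defining relation $\nabla^{f^*T\N}\hn_f = -df \circ S_f$ expresses the first derivatives of $\hn_f$ algebraically in terms of $df$ and the (smooth) tensor $S_f$, while the hypothesis that $f^*\h$ is smooth controls $df$ up to a pointwise (a.e.) rotation. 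The key structural observation is that the Gauss map equation, together with $df \in \O(\g,\h)$-type information coming from $f^*\h$ being prescribed, forces $f$ to satisfy a second-order PDE: differentiating $\hn_f \perp \operatorname{image} df$ and using the Gauss and Weingarten relations yields an equation of the form $\Delta_\g f = (\text{quadratic in } df) + (\text{lower order, smooth coefficients})$, i.e. $f$ is (locally) a solution of the harmonic map system for the metric $\g$ on the domain and $\h$ on the target, with an inhomogeneity built from $\g$, $S_f$, $\hn_f$ and $df$. This is the codimension-1 analogue of the classical fact that an isometry pulling back a smooth metric is harmonic hence smooth.

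Concretely, I would proceed in the following steps. \emph{Step 1:} Write out the first structure equations. From $f^*\h = \g$ (smooth) we get that $\g_{ij} = \h(\partial_i f, \partial_j f)$ in coordinates, so $|df|$ is bounded and bounded below; combined with $\hn_f$ being a unit normal, the frame $(\partial_1 f,\dots,\partial_d f, \hn_f)$ is a bounded, uniformly nondegenerate (a.e.) frame along $f$. \emph{Step 2:} Express $\nabla^{f^*T\N}_{\partial_i}\partial_j f$ in this frame: its tangential part is $\Gamma^k_{ij}(\g)\,\partial_k f$ (because $f^*\h = \g$ forces the induced Levi-Civita connection to be that of $\g$ — the Gauss formula), and its normal part is $\II_f(\partial_i,\partial_j)\hn_f$, where $\II_f$ is recovered from $S_f$ and $\g$, hence smooth. \emph{Step 3:} Trace over $i,j$ with $\g^{ij}$ to obtain
\[
\g^{ij}\nabla^{f^*T\N}_{\partial_i}\partial_j f \;=\; \g^{ij}\Gamma^k_{ij}(\g)\,\partial_k f \;+\; \big(\g^{ij}\II_f(\partial_i,\partial_j)\big)\hn_f,
\]
which, written in target normal coordinates, is precisely $\Delta_\g f^\alpha = -\Gamma^\alpha_{\beta\gamma}(\h)(f)\,\g^{ij}\partial_i f^\beta \partial_j f^\gamma + (\text{smooth})^\alpha$, a harmonic-map-type system with $W^{1,\infty}$ solution and smooth coefficients and inhomogeneity (the inhomogeneity involves $\hn_f$, which is a priori only $W^{1,\infty}$, so at this stage I treat it as an $L^\infty$ right-hand side). \emph{Step 4:} Apply elliptic $L^p$ regularity: $f \in W^{1,\infty}$ and the equation has bounded measurable right-hand side, so $f \in W^{2,p}_{loc}$ for all $p<\infty$, hence $f \in C^{1,\alpha}_{loc}$. \emph{Step 5:} Now feed this improved regularity of $f$ back into the Gauss map equation $\nabla^{f^*T\N}\hn_f = -df\circ S_f$: since $df$ is now $C^{0,\alpha}$ and $S_f$ is smooth, $\hn_f \in C^{1,\alpha}_{loc}$, so the inhomogeneity in the $f$-equation improves, giving $f \in C^{2,\alpha}_{loc}$. \emph{Step 6:} Iterate (Schauder bootstrap): each gain in regularity of $\hn_f$ feeds a gain in regularity of $f$ via the harmonic-map equation, and each gain in $df$ feeds a gain in $\hn_f$ via the first-order Gauss map equation. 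This closes the loop and yields $f \in C^\infty_{loc}$; since $q_0$ was arbitrary, $f$ is smooth on $\M$.

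The main obstacle I anticipate is \emph{Step 1--Step 3 done rigorously in the low-regularity setting}: the pull-back bundle $f^*T\N$ is only a $W^{1,\infty}$ bundle, so the connector/connection-operator formalism from the next section of the paper must be used carefully to make sense of $\nabla^{f^*T\N}_{\partial_i}\partial_j f$ and to justify that its tangential component really equals the Levi-Civita connection of $\g = f^*\h$ — this is the Gauss formula, which requires knowing $f^*\h$ is $C^1$ (which it is, being smooth) but also requires a careful distributional argument since second derivatives of $f$ do not yet exist classically. A clean way around this is to derive the harmonic-map equation first in the weak (distributional) form directly from the identities $\h(\partial_i f,\partial_j f) = \g_{ij}$ and $\h(\partial_i f, \hn_f) = 0$, $\h(\hn_f,\hn_f)=1$, differentiating these in the sense of distributions (legitimate since all factors are $W^{1,\infty}$) to extract the weak Euler--Lagrange system, and only then invoke elliptic regularity. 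Once the weak harmonic-map system with smooth coefficients is in hand, the bootstrap in Steps 4--6 is routine. A secondary, minor point is ensuring the ellipticity constant in Step 4 is uniform, which follows from the two-sided bounds on $df$ coming from $f^*\h = \g$ being smooth (hence, on compact $\M$, with $\g$ uniformly elliptic).
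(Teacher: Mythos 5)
Your Steps 4--6 (the Schauder/Sobolev bootstrap once a second-order system with integrable right-hand side is in hand) are sound and correspond roughly to the second half of the paper's argument. The genuine gap is in Steps 1--3: deriving the weak harmonic-map system $\Delta_\g f = -\Gamma(\h)(f)(\nabla f,\nabla f)+(\text{smooth})$ from data that is only $W^{1,\infty}$. Your proposed fix --- differentiate the identities $\h_{\alpha\beta}(f)\partial_i f^\alpha\partial_j f^\beta=\g_{ij}$ and $\h_{\alpha\beta}(f)\partial_i f^\alpha\hn_f^\beta=0$ in the sense of distributions --- does not close it. The normal component of the would-be Gauss formula can indeed be extracted this way, because the pairing of the first-order distribution $\partial_i\partial_j f^\alpha$ with a \emph{Lipschitz} coefficient such as $\h_{\alpha\beta}(f)\hn_f^\beta$ is well defined (integrate by parts onto the test function). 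But the tangential components, which are exactly what the Laplace--Beltrami trace requires, force you to pair $\partial_i\partial_j f^\alpha$ against $\h_{\alpha\beta}(f)\partial_k f^\beta$, which is only in $L^\infty$; that pairing is not defined, and the Koszul-type cancellations your Step 2 relies on cannot be justified at this regularity. This is precisely the obstruction that in codimension $0$ is overcome by the Piola identity ($\cofop df$ is a null Lagrangian, hence distributionally divergence-free for any Sobolev map, which is what supplies the tangential information and yields weak harmonicity); the paper explicitly notes that a codimension-$1$ analogue of the Piola identity is not available and flags the ``direct elliptic regularity'' route as an open question.

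The paper circumvents this by a different mechanism: it first upgrades $f$ from $W^{1,\infty}$ to $W^{2,2}$ by embedding $\N$ isometrically into $\R^D$, thickening $\M$ to $\M\times(-h,h)^{D-d}$, extending $f$ using $\hn_f$ and a normal frame of $\iota(\N)$, verifying that the extension has $O(h^2)$ bending energy, and invoking the thin-body compactness theorem of \cite{KS14} (which rests on the FJM rigidity estimate \cite{FJM02b}). Only with $f\in W^{2,2}$ in hand does the algebraic bootstrap through the structure equations (via the Leibniz rule in $W^{1,p}\cap L^\infty$) go through. To salvage your outline you would need either to supply a codimension-1 Piola-type identity, or to insert an independent argument producing $f\in W^{2,2}$ first.
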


Note that if $f$ was in $W^{2,p}$, this result would follow by bootstrapping the expressions for the second derivatives in terms the frame induced by $df$ and $\hn_f$.
However, we only know that $f\in W^{1,\infty}$ and that $\hn_f\in W^{1,\infty}$.
The improved regularity is then deduced from compactness of low energy configurations of thin bodies of arbitrary dimension and codimension \cite{KS14}, which, in turn, uses the FJM Euclidean rigidity estimate.
It is interesting whether one can deduce Theorem~\ref{thm:reg} directly from elliptic regularity, as done in codimension-0, using an adaptation of the Piola identity to codimension-1 (see \cite{KMS19}).

Given this regularity result,  it is sufficient, in order to prove Theorem~\ref{thm:main}, to prove that if $\E_p(f_n) \to 0$, then $f_n$ converges in $W^{1.p}$ to a zero energy map.
To this end, we deploy an approach using Young measures, first suggested for the proof of Reshetnyak's asymptotic stability theorem in \cite{JK89}, and then developed in \cite{KMS19} for proving the non-Euclidean equivalent result.
Since in our case, as in \cite{KMS19}, the target space of $f_n$ is not a vector space, one needs to work in local coordinates in order to use the Young measure limit. 

As such, this approach works for $p>d$, since only then maps in $W^{1,p}(\M;\N)$ are guaranteed to be localizable.
In \cite{KMS19}, the lower integrability regime $p\le d$ (which includes the most important case from the elasticity point of view, $p=2$), is obtained by first approximating $f_n$ by uniformly Lipschitz maps $\tilde{f}_n$.
A similar use of Lipschitz truncation for overcoming the localization problem of Sobolev maps between manifolds was done also in \cite{KM21}.

This approach, however, does not work directly on the energy $\E_p$, since it is not clear how to approximate immersions in $\Imm_p(\M;\N)$ by Lipschitz immersions (the immersion property is lost in the truncation process).
To overcome this difficulty, we first relax the energy $\E_p$ into an energy $\tE_p: W^{1,p}(\M;T\N) \to [0,\infty)$ (see Definition~\ref{def:relax_en}), which is defined and finite over all vector fields in $W^{1,p}(\M;T\N)$, and not only for vector fields that are perpendicular to their projection on $\N$, as in $\Imm_p(\M;\N)$.
From an elasticity point of view, the energy $\tE_p$ is essentially an energy for director fields, rather than configurations of the elastic body.
Thus, we prove a stronger version of Theorem~\ref{thm:main} for the energy $\tE_p$ (Theorem~\ref{thm:main2}) by applying Young measures for the case $p>d$, and precede this analysis with a Lipschitz truncation for $p\le d$, to get to the localizable regime.

\paragraph{Additional quantitative stability result:}
In the appendix, we include a short quantitative stability result, in the spirit of the FJM estimate:

We show in Theorem~\ref{thm:FJM_codim1} that if $\N = \R^{d+1}$, $p\in (1,\infty)$, and if the metric $\g$ of $\M$ and shape operator $S$ are compatible (i.e., if there exists an isometric immersion $\M\to \R^{d+1}$ with shape operator $S$), then for any $f\in \Imm_p(\M;\R^{d+1})$ there exists an isometric immersion $f_0:\M\to \R^{d+1}$ with shape operator $S$ such that
\[
\|f-f_0\|_{W^{1,p}}^p + \|\hn_f - \hn_{f_0}\|_{W^{1,p}}^p \le C \E_p(f),
\]
where $C>0$ is a constant independent of $f$.
In particular this implies Theorem~\ref{thm:main} for a Euclidean target space, under the assumption of compatibility.
Similar results, comparing the left-hand side of two immersions with an energy that measures stretching (discrepancy of the first fundamental forms) and bending (discrepancy of the second fundamental form), appear in \cite{CMM19b} (in particular Theorem~4.3).
These results, however, require $f$ to be in $\Imm_q(\M;\R^{d+1})$ for $q\ge \min\{p,2\}$, and, more importantly, requires a-priori uniform bounds on the first an second fundamental forms; Theorem~\ref{thm:FJM_codim1} does not require such a priori bounds, and also admits a simpler proof.
Additionally, Theorem~\ref{thm:FJM_codim1} can be generalized to director fields, using the relaxed energy $\tE_p$ (Definition~\ref{def:relax_en}), whereas the result of \cite{CMM19b} cannot.
The difference is mainly because of the different structures of the stretching and bending energy (in \cite{CMM19b}, the bending energy measures a discrepancy in the second fundamental forms rather than in the shape operators).
This can be seen as another evidence that the energy $\E_p$ is the natural ``stretching plus bending" energy from a calculus of variations point of view.

\begin{figure}
\label{fig:table}
\begin{center}
\begin{tabular}{ | L{3cm} | L{2cm} | L{2.7cm} | L{3cm} | L{2.5cm} |}
\hline
& \multicolumn{2}{ c |}{Codimension-0} 
& \multicolumn{2}{ c |}{Codimension-1} 
\\
\cline{2-5}
& Euclidean & Riemannian & Euclidean & Riemannian \\
\hline
Rigidity & \cite{Lio50} & ``Folklore" & Fundamental theorem of surfaces \cite{Ten71} & ``Folklore" \\
\hline
Regularity & \cite{Res67b} & \cite{Res78,Res94,LS14,KMS19} & \cite{AKM22} & Theorem~\ref{thm:reg} \\
\hline
$L^p$ Asymptotic Stability & \cite{Res67} &  \cite{KMS19} & \cite{AKM22} & Theorem~\ref{thm:main} \\
\hline
$L^p$ Quantitative Stability & \cite{FJM02b} & Constant curvature $\kappa>0$  \cite{CLS22,KM24} & 
\cite{CMM19b}, Theorem~\ref{thm:FJM_codim1}  & \\
\hline
\end{tabular}
\end{center}
\caption{A summary of the results presented in the introduction.
This list is not comprehensive: there are many other results on regularity of isometries (e.g., \cite{Har58,CH70,Tay06}), of isometric immersions without assumptions on the second fundamental form (e.g., \cite{Pak04,MP05,Hor11,LP13,JP17,HV18}), and of stability of immersions in Euclidean setting in various topologies, stronger than the $L^p$ stability discussed here \cite{Cia03,CM16,CM19,CMM20}.}
\label{fig:1}
\end{figure}

\paragraph{Structure of the paper}
In Section~\ref{sec:imm_p}, we review the definitions of Sobolev spaces between manifolds and prove some basic properties, in particular for Sobolev vector fields. We also introduce coordinate representations of the energy and the main fields in Section~\ref{sec:coordinates}.
In Section~\ref{sec:main_pf}, we define the relaxed energy (Section~\ref{sec:relaxed}) and prove Theorem~\ref{thm:main}, modulo the smoothness result (Theorem~\ref{thm:reg}), which we proof in Section~\ref{sec:smoothness}.
Finally, we prove in Appendix~\ref{sec:FJM_codim1} the codimension-1 Euclidean stability result. 
Figure~\ref{fig:1} displays a table in which our results are put in context with the existing literature.

\paragraph{Notations} 
Let $(V,\g)$ and $(W,\h)$ be oriented inner-product spaces. We denote the inner-products by $\ip{\cdot,\cdot}_\g$ and $\ip{\cdot,\cdot}_\h$. 
We  denote the corresponding norms by $|\cdot|$, unless the notations $|\cdot|_\g$ and $|\cdot|_\h$ help readability.  
We denote by $\Hom(V,W)$ the space of linear maps from $V$ to $W$; we denote by $\O(\g,\h)\subset\Hom(V,W)$ the subset of orthogonal maps; if $V$ and $W$ have equal dimensions, we denote by $\SO(\g,\h)\subset \O(\g,\h)$ the set of orientation-preserving orthogonal maps. 
We denote the norm on $\Hom(V,W)$ induced by $\g$ and $\h$ by $|\cdot|$, rather than by the more cumbersome notation $|\cdot|_{\g,\h}$. For a set $K\subset \Hom(V,W)$ and $A\in\Hom(V,W)$, we denote by $\dist(A,K)$ the distance (with respect to the norm $|\cdot|$) between the element $A$ and the set $K$. 
These notation carry on naturally if $V$ and $W$ are replaced by vector bundles over a manifold (in particular to the tangent bundle endowed with a Riemannian metric).

For a manifold $\M$, we denote by $\Omega^k(\M)$ the space of $k$-forms on $\M$. For a vector bundle $E\to \M$, we denote by $\Gamma(E)$ the space of sections of $E$, and by $\Omega^k(\M;E)$ the space of $E$-valued $k$-forms on $\M$. Likewise, we denote by $L^p\Gamma(E)$, $L^p\Omega^k(\M;E)$, $W^{1,p}\Gamma(E)$ and $W^{1,p}\Omega^k(\M;E)$ the corresponding spaces having $L^p$- and $W^{1,p}$-regularity; the corresponding norms are induced by norms on $T\M$ and $E$. Finally, for a Riemannian manifold $(\M,\g)$, $\Volume$ denotes the Riemannian volume form.
More notations are introduced in the next section in the context of mappings between manifolds.

\section{Sobolev maps between manifolds}
\label{sec:imm_p}

This work is concerned with Sobolev maps between Riemannian manifolds. 
Let $(\M,\g)$ and $(\X,\frakx)$ be Riemannian manifolds of arbitrary dimensions, with $\M$ compact, possibly with boundary, and $\X$ without boundary (in the sequel, $(\X,\frakx)$ is either a compact Riemannian manifold $(\N,\h)$, or its non-compact tangent space $(T\N,S^\h)$, where the metric $S^\h$ is described below). 
For smooth $f:\M\to \X$, we denote by $Df:T\M\to T\X$ the tangent map, acting as
\[
Df : (q,\xi) \mapsto (f(q), df_q(\xi))
\qquad q\in\M, \,\,\, \xi\in T_q\M,
\]
which is a linear bundle morphism covering $f$ (e.g., \cite[Chap.~2]{Sau89}).
The map $Df$ should be distinguished from the differential $df\in\Gamma(T^*\M\otimes f^*T\X)$, acting as
\[
df_q : \xi \mapsto df_q(\xi),
\] 
which  is a linear bundle map; $df$ is the pullback of $Df$, where as common, $f^*T\X$ denotes a vector bundle over $\M$, with the canonical identification $(f^*T\X)_q \simeq T_{f(q)}\X$. 

For $p\ge1$, the space of Sobolev maps $W^{1,p}(\M;\X)$ can be defined intrinsically, along with a notion of a weak derivative (see Convent and van Schaftingen \cite{CVS16} for a recent account). 
An equivalent extrinsic (and more common) definition is given by using Nash's embedding theorem, introducing an isometric embedding $\iota:(\X,\frakx)\to(\R^D,\euc)$, for some $D$ large enough. Then,
\[
W^{1,p}(\M;\X) = \{f:\M\to \X ~:~ \iota\circ f \in W^{1,p}(\M;\R^D)\}.
\]
This space inherits the strong and weak topologies of $W^{1,p}(\M;\R^D)$, and it is independent of the embedding $\iota$.
Moreover, $\iota\circ W^{1,p}(\M;\X)$ is a weakly closed subset of $W^{1,p}(\M;\R^D)$, and in particular bounded sequences have weakly-convergent subsequences for $p\in (1,\infty)$. 
That is, if $f_n \in W^{1,p}(\M;\X)$ satisfies that $\iota\circ f_n$ is bounded in $W^{1,p}(\M;\R^D)$, then there exists a subsequence and an $f \in W^{1,p}(\M;\X)$ such that $f_n\wc f$ in $W^{1,p}(\M;\X)$, which by definition means that $\iota\circ f_n \wc \iota\circ f$ in $W^{1,p}(\M;\R^D)$.
Even though $df$ has to be interpreted as a weak derivative, it still holds that $df_q$ is a linear map from $T_q\M$ to $T_{f(q)}\X$ for almost every $q\in\M$. 
Furthermore, the chain rule holds a.e., $D(\iota\circ f) = D\iota\circ Df$ \cite[Prop.~1.8]{CVS16}.

Our choice of working with the tangent map $Df$ rather than the differential $df$ is due to the following:
When $f$ is a Sobolev map, the pullback bundle $f^*T\X$ is a not a smooth vector bundle, and consequently $df$ is a bundle map into a non-smooth vector bundle; this is in contrast with the tangent map $Df$, which is a map between two smooth vector bundles, $T\M$ and $T\X$.
The price to pay is that $Df$ is not a linear vector bundle map, but rather a linear bundle morphism.

Note that, if $\iota:(\X,\frakx)\to(\R^D,\euc)$ is an isometric embedding, then by our convention,
\[
D\iota : T\X\to \R^D\times \R^D,
\qquad
D\iota:(q,\xi) \mapsto (\iota(q),d\iota_q(\xi)),
\]
which is an embedding of $T\X$ into Euclidean space (though not necessarily isometric),
whereas
\[
d\iota: T\X\to \R^D,
\qquad
d\iota:(q,\xi) \mapsto d\iota_q(\xi)
\]
is not an embedding.
For $f\in W^{1,p}(\M,\X)$,
\[
D(\iota\circ f) = D\iota \circ Df : T\M\to  \R^D\times \R^D,
\]
is an immersion if $f$ is an immersion, whereas
\[
d(\iota\circ f) = d\iota \circ Df : T\M\to  \R^D
\]
is not.
In this context, $T^*\M\otimes T\X$ is a vector bundle over $\M\times \X$, and the latter can be viewed as a fiber bundle over $\M$. 
A section of $T^*\M\otimes T\X\to\M$ is a map $\zeta:\M\to T^*\M\otimes T\X$, such that for $q\in\M$, $\zeta(q)$ is a linear map from $T_q\M$ to $T_{\pi(\zeta(q))}\X$. 
Moreover, we interpret $D\iota\circ \zeta: \M\to \R^D\times (T^*\M\otimes\R^D)$, i.e., $D\iota$ only acts here on the $T\X$ component.
We define
\[
L^p(\M;T^*\M\otimes T\X) = \{\zeta :\M\to T^*\M\otimes T\X ~:~ D\iota\circ \zeta \in L^p(\M;\R^D\times T^*\M\otimes\R^D)\}.
\]
Finally, for $\xi:\M\to T\X$, we denote
\[
|\xi|_\frakx = |d\iota\circ\xi|_\euc,
\]
i.e., for $q\in \M$, the norm only captures the linear part of the mapping $\xi(q)\in T_{\pi(\xi(q))}\X$.

When $p>\dim\M$, one can equivalently define $W^{1,p}(\M;\X)$ as the set of continuous functions, which upon composition with charts in $\M$ and $\X$ are in $W^{1,p}(\R^{\dim \M};\R^{\dim \X})$; strong convergence in $W^{1,p}(\M;\X)$ is equivalent to strong convergence in every coordinate chart \cite[Lemmata B.5, B.7]{Weh04}.

\paragraph{Sobolev vector fields} 

Given two compact Riemannian manifolds $(\M,\g)$ and $(\N,\h)$, we apply the above construction to Sobolev maps from $\M$ into vector fields in $\N$, i.e., to maps $\xi: \M\to T\N$.
To this end, we need to introduce a Riemannian metric on $T\N$.

The Riemannian metric $\h$ on $T\N$ induces in a canonical way a Riemannian metric on $T\N$.  
Specifically, there exists a one-to-one correspondence between an affine connection $\nabla$ on $T\N$ and a \Emph{connector operator}, $K: TT\N\to T\N$, such that for $q\in\N$ and $s\in T_q\N$,
\[
K_s : T_s T\N\to T_q\N.
\]
The connector induces the covariant derivative via
\[
\nabla_s \eta = K^\h \circ D\eta(s) 
\qquad
\text{for every $\eta\in\Gamma(T\N)$ and $s\in T\N$}.
\]
Denote by $x\in \R^{\dim \N}$ coordinates on $\N$; by choosing a local frame field in this coordinate patch, we obtain local coordinates $(x,v)\in  \R^{2\dim \N}$ of $T\N$.
We denote the associated coordinates on $TT\N$ by $(x,v,w,s)\in \R^{4\dim \N}$.
In these coordinates, the projection $\pi:T\N\to \N$ is given by $\pi(x,v) = x$, its derivative $D\pi:TT\N\to T\N$ by $D\pi(x,v,w,s) = (x,w)$, and the connector $K$ is given by 
\[
K(x,v,w,s)=(x, s+\Gamma(x)[v,w]),
\]
where $\Gamma:\R^{\dim \N}\to \operatorname{Bil}(\R^{\dim\N})$ is a smooth map into the space of bilinear maps on $\R^{\dim\N}$ (representing the Christoffel symbols).
In the following, we will always take the Levi-Civita connection $\nabla^\h$, and its corresponding connector $K^\h$.

The map $D\pi\times K^\h :TT\N\to T\N\times_\N T\N$ is a linear bundle isomorphism (covering the projection $\pi:T\N\to\N$), turned into an isometry by introducing the \Emph{Sasaki metric} $S^\h$ on $TT\N$ \cite{Sas58}, 
\beq\label{eq:sasaki}
\ip{V,W}_{S^\h} = \ip{D\pi(V),D\pi(W)}_\h + \ip{K^\h(V),K^\h(W)}_\h.
\eeq

By choosing an isometric embedding $\iota:(T\N,S^\h) \to (\R^D,\euc)$, where $\euc$ is the Euclidean metric,
that is,
\[
\ip{d\iota(V),d\iota(W)}_\euc = \ip{V,W}_{S^\h}
\qquad
V,W\in TT\N,
\]
we can define the Sobolev space $W^{1,p}(\M;T\N)$ as above. 

The zero section $\zeta\in\Gamma(T\N)$ is an isometric embedding of $(\N,\h)$ into  $(T\N,S^\h)$: indeed, for $q\in\N$ and $v,w\in T_q\N$, 
\[
\begin{split}
\ip{v,w}_{\zeta^\#S^\h} &= \ip{D\zeta(v),D\zeta(w)}_{S^\h} \\
&=  \ip{D\pi\circ D\zeta(v),D\pi \circ D\zeta(v)}_\h + \ip{K^\h\circ D\zeta(v),K^\h\circ D\zeta(v)}_\h \\
&= \ip{v,w}_\h,
\end{split}
\]
where the first equality is the definition of the pullback metric $\zeta^\# S^\h$, and in the last passage we used the fact that $D\pi\circ D\zeta = \id_{T\N}$ and $K^\h\circ D\zeta =0$. 
Thus, $\jmath: \N\to\R^D$ given by $\jmath = \iota\circ \zeta$ is an isometric embedding of $(\N,\h)$ into $(\R^D,\euc)$ (although for our uses one can choose any other isometric embedding of $\N$ into Euclidean space).

For a map $\xi:\M\to T\N$ we denote $f_\xi = \pi \circ \xi : \M\to \N$. 
Note that by the definition of the Sasaki metric,
\beq
|D\xi|^2 = |Df_\xi|^2 + |K^\h\circ D\xi|^2,
\label{eq:Dxi}
\eeq
or equivalently,
\beq
|d\iota\circ D\xi|^2 = |d\jmath\circ D f_\xi|^2 + |d\jmath\circ K^\h\circ D\xi|^2.
\label{eq:Dxi_coord}
\eeq
(Note that $d\jmath:T\N\to\R^D$ rather than $\iota:T\N\to\R^D$ is the linear isometry.)
The following lemma asserts that strong and weak convergence of  a sequence $\xi_n$ implies the corresponding convergence of $f_{\xi_n}$:

\begin{lemma}\label{lem:f_convergence}
Let $p\in[1,\infty)$.
If $\xi \in W^{1,p}(\M;T\N)$, then $f_\xi \in W^{1,p}(\M;\N)$.
Furthermore, if $\xi_n \wc \xi$ in $W^{1,p}(\M;T\N)$, then $f_{\xi_n} \wc f_\xi$ in $W^{1,p}(\M;\N)$, and similarly for strong convergence.
\end{lemma}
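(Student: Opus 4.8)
The plan is to reduce everything to the extrinsic definitions via the embedding $\iota:(T\N,S^\h)\to(\R^D,\euc)$ and the induced embedding $\jmath = \iota\circ\zeta:(\N,\h)\to(\R^D,\euc)$, and then exploit the pointwise Sasaki splitting \eqref{eq:Dxi_coord}. First I would note that $f_\xi = \pi\circ\xi$, and observe that $\pi$ is a fixed smooth map with $\pi = \zeta^{-1}$ on the zero section; however, it is cleaner to work with the factorization $d\jmath\circ Df_\xi = D\pi$-component of $d\iota\circ D\xi$ appearing in \eqref{eq:Dxi_coord}. Concretely, since $\jmath$ is an isometric embedding of $\N$, membership $f_\xi\in W^{1,p}(\M;\N)$ is by definition equivalent to $\jmath\circ f_\xi\in W^{1,p}(\M;\R^D)$; and $\jmath\circ f_\xi = \iota\circ\zeta\circ\pi\circ\xi$. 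The key point is that $\zeta\circ\pi:T\N\to T\N$ (zero section composed with projection) is a smooth map of $T\N$ into itself with uniformly bounded differential on any compact piece — but $T\N$ is noncompact, so I must be slightly careful: I would instead argue pointwise from \eqref{eq:Dxi_coord}, which gives $|d\jmath\circ Df_\xi| \le |d\iota\circ D\xi|$ a.e. Combined with the fact that $\iota\circ\xi\in L^p$ forces $\jmath\circ f_\xi = (\text{smooth Lipschitz-on-bounded-sets retraction})\circ(\iota\circ\xi)$ to be in $L^p$ (here one uses that $\zeta\circ\pi$ composed with $\iota$ is globally Lipschitz as a map $\mathbb R^D\to\mathbb R^D$ after noting the relevant images, or simply that $|\jmath\circ f_\xi|$ is controlled because $f_\xi$ takes values in the compact $\N$, so $\jmath\circ f_\xi$ is bounded), we get $\jmath\circ f_\xi\in W^{1,p}(\M;\R^D)$, i.e. $f_\xi\in W^{1,p}(\M;\N)$.

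For the convergence statements, suppose $\xi_n\wc\xi$ in $W^{1,p}(\M;T\N)$, meaning $\iota\circ\xi_n\wc\iota\circ\xi$ in $W^{1,p}(\M;\R^D)$. I want $\jmath\circ f_{\xi_n}\wc\jmath\circ f_\xi$ in $W^{1,p}(\M;\R^D)$. The bound \eqref{eq:Dxi_coord} shows $\jmath\circ f_{\xi_n}$ is bounded in $W^{1,p}$ (its $W^{1,p}$ norm is dominated by that of $\iota\circ\xi_n$ plus the uniform bound coming from $\N$ compact), so after passing to a subsequence it converges weakly in $W^{1,p}$ to some limit; by Rellich–Kondrachov it converges strongly in $L^p$ (and a.e. along a further subsequence). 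On the other hand $\iota\circ\xi_n\to\iota\circ\xi$ strongly in $L^p$ (again Rellich), hence a.e. along a subsequence, and since $\jmath\circ f_{\xi_n} = (\iota\circ\zeta\circ\pi\circ\iota^{-1})(\iota\circ\xi_n)$ with $\iota\circ\zeta\circ\pi\circ\iota^{-1}$ continuous on $\iota(T\N)$, we get $\jmath\circ f_{\xi_n}\to\jmath\circ f_\xi$ a.e., identifying the weak $W^{1,p}$ limit as $\jmath\circ f_\xi$. A subsequence-of-every-subsequence argument upgrades this to weak convergence of the full sequence. For strong convergence: if $\iota\circ\xi_n\to\iota\circ\xi$ strongly in $W^{1,p}$, then from \eqref{eq:Dxi_coord} one has $|d\jmath\circ Df_{\xi_n} - d\jmath\circ Df_\xi|$ controlled — this needs a small argument since \eqref{eq:Dxi_coord} is an identity for the norms of $D\xi_n$, not a Lipschitz bound for $f_\xi$ in terms of $\xi$; I would instead differentiate the relation $Df_{\xi_n} = D\pi\circ D\xi_n$ directly: $d(\jmath\circ f_{\xi_n}) = (D\jmath\circ D\pi)(D\xi_n)$ and $D\jmath\circ D\pi\circ(D\iota)^{-1}:\iota(T\N)\to\Hom(\R^{\dim\M},\R^D)$ is smooth, so a standard chain-rule-plus-dominated-convergence argument for compositions of $W^{1,p}$ maps with smooth maps having derivative bounded along the (precompact in $C^0$, since $p>\dim\M$ is not assumed — so one truncates or uses the $L^p$ a.e. convergence of gradients) images yields strong $W^{1,p}$ convergence of $\jmath\circ f_{\xi_n}$.

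The main obstacle is the noncompactness of $T\N$: all the "composition with a smooth map preserves $W^{1,p}$ and its topologies" lemmas are stated for compact targets, so I cannot black-box them and must instead peel off, at the pointwise level, the two components of the Sasaki metric. The identity \eqref{eq:Dxi_coord} is exactly the tool for this — it says the differential of $f_\xi$ is literally one of the two orthogonal blocks of the differential of $\xi$, so no genuine analysis is needed beyond the pointwise inequality $|d\jmath\circ Df_\xi|\le|d\iota\circ D\xi|$ together with the boundedness of $\jmath\circ f_\xi$ (because $f_\xi$ lands in the compact manifold $\N$). Once that is in hand, all the weak/strong convergence claims follow from the definitions of the topologies on Sobolev spaces between manifolds as traces of the $\R^D$-topologies, plus Rellich–Kondrachov and the usual subsequence argument.
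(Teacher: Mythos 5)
Your proposal is correct and follows essentially the same route as the paper: reduce to the extrinsic $\R^D$ picture via $\iota$ and $\jmath$, use the Sasaki splitting \eqref{eq:Dxi_coord} for the pointwise domination $|Df_\xi|\le|D\xi|$ together with compactness of $\N$ to get membership and $W^{1,p}$-boundedness, identify the weak limit through a.e.\ convergence and a subsequence-of-every-subsequence argument, and obtain strong convergence from a.e.\ convergence of the differentials plus the generalized dominated convergence theorem with dominating functions $|D\xi_n|^p$.
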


\begin{proof}
Let $\iota:(T\N,S^\h)\to(\R^D,\euc)$ and $\jmath : (\N,\h)\to (\R^D,\euc)$ be defined as above. 
Let $\xi\in W^{1,p}(\M;T\N)$. Since $\N$ is compact, the fact that $f_\xi\in W^{1,p}(\M;\N)$ follows from \eqref{eq:Dxi} (or equivalently, \eqref{eq:Dxi_coord}).

Let $\xi_n \wc \xi$ in $W^{1,p}(\M;T\N)$, i.e., $\iota\circ\xi_n \wc \iota\circ \xi$ in $W^{1,p}(\M;\R^D)$.
By \eqref{eq:Dxi_coord} and the compactness of $\N$,  $\jmath \circ f_{\xi_n}$ is bounded in $W^{1,p}(\M;\R^D)$ (and equi-integrable if $p=1$), and thus $\jmath\circ f_{\xi_n}$ weakly converges to some $F\in W^{1,p}(\M;\R^D)$ (modulo a subsequence). 
We need to show that $F = \jmath\circ f_\xi$.
Since weak $W^{1,p}$-convergence implies strong $L^p$-convergence, we can move to a subsequence and obtain that $\iota\circ\xi_n \to \iota\circ\xi$ almost everywhere, and thus, for that same subsequence, $\jmath\circ f_{\xi_n}\to \jmath\circ f_\xi$ almost everywhere, which by the uniqueness of the limit implies that $F=\jmath\circ f_\xi$.
As we could have started with any subsequence of $\xi_n$ and obtain the result for a sub-subsequence, the claim holds for the whole sequence.

Next assume that  $\xi_n \to \xi$ strongly in $W^{1,p}(\M;T\N)$.
Since, a fortiori, $\xi_n \wc  \xi$ in $W^{1,p}(\M;T\N)$, it follows from the previous clause that 
$f_{\xi_n} \wc f_\xi$ in $W^{1,p}(\M;\N)$, or equivalently, that $\jmath\circ f_{\xi_n}\wc \jmath\circ f_\xi$ in $W^{1,p}(\M;\R^D)$.
We need to show that $d(\jmath\circ f_{\xi_n})\to d(\jmath\circ f_\xi)$ in $L^p(\M;T^*\M\otimes\R^D)$.
It is sufficient to prove this for a subsequence, and thus we can assume, without loss of generality, that $D\iota \circ D\xi_n \to D\iota \circ D\xi$ almost everywhere. 
Since $D\iota$ is an embedding, it follows that $D\xi_n \to D\xi$ almost everywhere (as maps from $\M\to T^*\M\otimes TT\N$).
In particular, $D\pi\circ D\xi_n\to D\pi \circ D\xi$ almost everywhere (as maps $\M \to T^*\M\otimes T\N$), 
i.e., $D f_{\xi_n} \to D f_{\xi_n}$ almost everywhere, and thus also $d(\jmath\circ f_{\xi_n})\to d(\jmath\circ f_\xi)$.
To complete the proof it remains to show that 
\beq\label{eq:Df_n_to_Df}
\limn \int_\M |Df_{\xi_n}|^p\, \Volume = \int_\M |Df_\xi|^p\, \Volume.
\eeq
Since  $D f_{\xi_n} \to D f_{\xi_n}$ almost everywhere we also have that $|Df_{\xi_n}|^p \to |Df_\xi|^p$ almost everywhere,
and since 
\[
|Df_{\xi_n}|^p \le |D\xi_n|^p
\Textand
|Df_\xi|^p \le |D\xi|^p,
\]
and since it follows from the strong $W^{1,p}$-convergence of $\xi_n$ that
\[
\limn \int_\M |D\xi_n|^p\, \Volume = \int_\M |D\xi|^p\, \Volume,
\]
the limit \eqref{eq:Df_n_to_Df} follows from
a refinement of the dominated convergence theorem (e.g., \cite[Theorem 1.20]{EG15}).
\end{proof}

The following lemma, which will be needed in the sequel, addresses a situation where $\xi_n\wc\xi$ in $W^{1,p}$ along with $K^\h \circ D\xi_n$ converging strongly in $L^p$. 
The fact that the limit is, as one would expect, $K^\h \circ D\xi$ is somewhat non-trivial, since the two converging sequences have to be interpreted with respect to two different maps into $\R^D$. 
Since this suffices in the application below, we assume that $p$ is large enough:

\begin{lemma}\label{lem:connector_convergence}
Let $p>\dim \M$, and let $\iota$ and $\jmath$ be defined as above. Assume that $\xi_n \wc \xi$ in $W^{1,p}(\M;T\N)$.
Assume further that $d\jmath \circ K^\h\circ D\xi_n \to d\jmath \circ T$ in $L^p(\M;T^*\M\otimes\R^D)$ for some map $T:\M \to \Hom(T\M,T\N)$.
Then $T = K^\h \circ D\xi$ a.e.
\end{lemma}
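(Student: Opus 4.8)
The idea is to combine the weak convergence $\xi_n \wc \xi$ (which controls $Df_{\xi_n}$ and $D\pi \circ D\xi_n$) with the strong convergence hypothesis on $d\jmath \circ K^\h \circ D\xi_n$, and pass to an almost-everywhere convergent subsequence where both limits are explicit. Since $p > \dim \M$, Sobolev maps between manifolds are localizable: by \cite[Lemmata B.5, B.7]{Weh04} strong $W^{1,p}$-convergence is equivalent to strong convergence in coordinate charts, and weak convergence implies local uniform convergence of the maps themselves together with strong $L^p$ convergence of derivatives along a subsequence. So first I would fix a finite atlas of $\M$ by charts and a compatible atlas of $T\N$ (using coordinates $(x,v)$ as in \eqref{eq:sasaki} and the associated $(x,v,w,s)$ on $TT\N$), and reduce everything to a statement in local coordinates.

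\textbf{Key steps.} (1) From $\xi_n \wc \xi$ in $W^{1,p}(\M;T\N)$ and $p > \dim\M$, pass to a subsequence so that $\xi_n \to \xi$ locally uniformly and $D\iota \circ D\xi_n \to D\iota\circ D\xi$ a.e.; in particular, in local coordinates, writing $\xi_n \leftrightarrow (x_n(\cdot), v_n(\cdot))$, we get $x_n \to x$, $v_n \to v$ locally uniformly and $(dx_n, dv_n) \to (dx, dv)$ a.e. In particular $D\pi \circ D\xi_n = Df_{\xi_n} \to Df_\xi = D\pi \circ D\xi$ a.e. (2) Using the coordinate formula $K^\h(x,v,w,s) = (x,\, s + \Gamma(x)[v,w])$, write $K^\h \circ D\xi_n$ in coordinates as $(x_n,\, dv_n + \Gamma(x_n)[v_n, dx_n])$. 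Now $d\jmath$ is, fiberwise over each point of $\N$, the \emph{linear isometry} $T_qN \to \R^D$ in the Sasaki-vs-Euclidean sense, and the hypothesis says $d\jmath \circ K^\h\circ D\xi_n \to d\jmath\circ T$ strongly in $L^p$; passing to a further subsequence, this convergence is a.e. (3) At a.e. point $q$ where all the a.e. convergences hold, evaluate: the base point of $K^\h \circ D\xi_n(q)$ converges to $f_\xi(q)$ (by step 1), so the relevant fiber $T_{f_{\xi_n}(q)}\N$ converges to $T_{f_\xi(q)}\N$, and since $d\jmath$ varies continuously and is a fiberwise isomorphism, the strong a.e. convergence of $d\jmath\circ K^\h \circ D\xi_n(q)$ forces $K^\h\circ D\xi_n(q)$ itself to converge (as an element of $T^*_q\M \otimes T\N$) — to some limit whose $d\jmath$-image is $d\jmath\circ T(q)$, hence equal to $T(q)$. (4) On the other hand, by step 1 and the explicit coordinate formula, $K^\h\circ D\xi_n(q) = (x_n, dv_n + \Gamma(x_n)[v_n,dx_n])(q) \to (x, dv + \Gamma(x)[v,dx])(q) = K^\h \circ D\xi(q)$, because $\Gamma$ is smooth and $x_n, v_n, dx_n, dv_n$ all converge at $q$. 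Comparing the two limits gives $T(q) = K^\h\circ D\xi(q)$ for a.e. $q$, which is the claim. A subsequence-of-subsequence argument removes the passages to subsequences, since the limit $K^\h\circ D\xi$ is independent of them.

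\textbf{Main obstacle.} The delicate point is step 3: reconciling two convergences that live over \emph{different} maps into $\R^D$. The sequence $d\jmath\circ K^\h\circ D\xi_n$ is a genuine sequence in the fixed vector space $\R^D$ and its strong $L^p$ limit is given; but to identify the limit intrinsically we must know that the a.e. limit of the $\R^D$-vectors "comes from" $T\N$ over the correct base point — i.e., that no "drift" along the (nonlinear, merely immersed) embedding $D\iota$ of the $v$-fibers occurs. This is exactly where $p > \dim\M$ is used: it guarantees $x_n \to x$ and $v_n \to v$ \emph{uniformly} (not merely in $L^p$), so the fibers $T_{f_{\xi_n}(q)}\N \hookrightarrow \R^D$ converge uniformly as subspaces, and $d\jmath$ restricted to them converges uniformly to the linear isometry $d\jmath|_{T_{f_\xi(q)}\N}$; composing with the a.e. convergence of the $\R^D$-vectors then legitimately yields a.e. convergence of $K^\h\circ D\xi_n$ as a section of $T^*\M\otimes T\N$. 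Once that is set up carefully, matching it against the independent coordinate computation in step 4 is routine. Everything else — the passages to subsequences, the use of \eqref{eq:Dxi_coord} to get the needed $W^{1,p}$ bounds, the smoothness of $\Gamma$ — is standard.
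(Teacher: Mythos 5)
There is a genuine gap in your Step (1), and it propagates to Step (4). You assert that from $\xi_n \wc \xi$ in $W^{1,p}(\M;T\N)$ with $p>\dim\M$ one can pass to a subsequence along which $D\iota\circ D\xi_n \to D\iota\circ D\xi$ almost everywhere (equivalently, in coordinates, $(dx_n,dv_n)\to(dx,dv)$ a.e.), and you describe this as ``weak convergence implies \ldots strong $L^p$ convergence of derivatives along a subsequence.'' This is false, and it is not what the cited Wehrheim lemmata say: weak $W^{1,p}$ convergence with $p>\dim\M$ gives uniform convergence of the maps themselves (Morrey plus compactness of the embedding into $C^0$), but only \emph{weak} $L^p$ convergence of the derivatives; no subsequence of $\cos(nx)$ converges a.e.\ to its weak limit $0$. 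Consequently Step (4) --- the claim that $K^\h\circ D\xi_n(q)\to K^\h\circ D\xi(q)$ for a.e.\ $q$ because ``$x_n, v_n, dx_n, dv_n$ all converge at $q$'' --- does not go through. You are then left with only one computation of the limit (Step (3) identifies the a.e.\ limit as $T$, and that part is fine), with nothing to compare it to.

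The repair is to replace the pointwise comparison by a weak-limit comparison, which is what the paper does. In coordinates, $K^\h\circ D\xi_n=(x_n,\,Dv_n+\Gamma(x_n)[v_n,Dx_n])$, and one shows this converges \emph{weakly} in $L^p$ to $(x,\,Dv+\Gamma(x)[v,Dx])=K^\h\circ D\xi$: this needs only $Dx_n\wc Dx$ and $Dv_n\wc Dv$ weakly in $L^p$ together with the \emph{uniform} convergence $(x_n,v_n)\to(x,v)$, since the product of a uniformly convergent factor with a weakly $L^p$-convergent factor converges weakly (write $a_nb_n-ab=(a_n-a)b_n+a(b_n-b)$). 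This is precisely where $p>\dim\M$ is used. On the other hand, the hypothesis $d\jmath\circ K^\h\circ D\xi_n\to d\jmath\circ T$ strongly in $L^p$, combined with your (correct) Step (3) argument that $D\iota$ is an embedding so the convergence transfers back to the fibers, gives $K^\h\circ D\xi_n\to T$ a.e.\ along a subsequence; an a.e.\ limit and a weak $L^p$ limit of the same (bounded) sequence must coincide, whence $T=K^\h\circ D\xi$ a.e. Your treatment of the ``two different embeddings'' issue is sound; the missing idea is that the second limit must be computed weakly, not pointwise.
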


\begin{proof}
Since $p>\dim\M$, $\xi_n \to \xi$ uniformly, hence we can work with local coordinates:
Let $X:\Omega \subset \R^{\dim \M} \to \M$ and $Y: \tilde\Omega \subset \R^{\dim \N} \to \N$ be local coordinate systems, and 
denote $(x_n , v_n) = (DY)^{-1} \circ \xi_n \circ X$, and similarly $(x,v) = (DY)^{-1} \circ \xi \circ X$, both maps $\Omega\to \tilde\Omega\times\R^{\dim\N}$.
The uniform convergence $\xi_n\to\xi$ implies that $(x_n,v_n) \to (x,v)$ uniformly.
The boundedness of $\xi_n$ in $W^{1,p}(\M;T\N)$ implies the boundedness of the coordinate expressions $(x_n,v_n)$ in $W^{1,p}(\Omega, \R^{2\dim \N})$, hence $(x_n,v_n) \wc (x,v)$ in $W^{1,p}(\Omega, \R^{2\dim \N})$ to $(x,v)$.
In these coordinates, 
\[
D\xi_n = (x_n,v_n, Dx_n[\cdot], Dv_n[\cdot]): \Omega \to (\tilde\Omega\times\R^{\dim\M}) \times \Hom(\R^{\dim\N}, \R^{2\dim \N}), 
\]
hence,
\[
K^\h \circ D\xi_n = (x_n,Dv_n[\cdot] + \Gamma(x_n)[v_n,Dx_n[\cdot]]).
\]
Since $Dv_n \wc Dv$ and $Dx_n \wc Dx$ in $L^p$, $\Gamma$ is smooth and $(x_n,v_n)\to (x,v)$ uniformly, we obtain that
\[
(x_n,Dv_n[\cdot] + \Gamma(x_n)[v_n,Dx_n[\cdot]]) \wc  (x,Dv[\cdot] + \Gamma(x)[v,Dx[\cdot]])
\]
in $L^p(\Omega;\tilde\Omega\times\Hom(\R^{\dim\M},\R^{\dim\N}))$.
We has thus proved that $K^\h \circ D\xi_n \wc K^\h \circ D\xi$ weakly in $L^p$ in every coordinate patch.

Add now the fact that $dj \circ K^\h\circ D\xi_n \to dj \circ T$ strongly in $L^p$.
By moving to subsequences, we can assume that $dj \circ K^\h\circ D\xi_n \to dj \circ T$ a.e.
Since $Dj:\N\to \R^{2K}$ is an embedding, it follows that $K\circ D\xi_n \to T$ a.e.
Therefore, in coordinates, for almost every $p\in \Omega$,
\[
(x_n(p),D_pv_n[\cdot] + \Gamma(x_n(p))[v_n(p),D_px_n[\cdot]]) \to T(p)[\cdot].
\]
Thus $T = K \circ D\xi$ a.e. in every coordinate patch, hence a.e.
\end{proof}

The following is an immediate application of \cite[Prop.~D.1]{AKM22}:

\begin{proposition}
\label{prop:D1}
Let $p>\dim\M$. Then, $\xi_n\to \xi$ in $W^{1,p}(\M;T\N)$ if and only if $f_{\xi_n}\to f_\xi$ in $W^{1,p}(\M;\N)$, and for some isometric embedding $\jmath:\N\to\R^D$, 
\[
d\jmath\circ \xi_n \to d\jmath\circ \xi 
\qquad
\text{in $L^p(\M;\R^D)$}, 
\]
and
\[
d\jmath\circ K^\h\circ D\xi_n \to d\jmath\circ K^\h \circ D\xi 
\qquad
\text{in $L^p(T^*\M;\R^D)$}.
\]
\end{proposition}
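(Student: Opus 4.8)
The plan is to obtain this equivalence as a direct instance of the abstract convergence criterion \cite[Prop.~D.1]{AKM22}; below I describe the mechanism in the present, concrete setting. The two structural facts I will lean on are the Sasaki splitting \eqref{eq:Dxi}--\eqref{eq:Dxi_coord},
\[
|d\iota\circ D\xi|^2 = |Df_\xi|^2 + |K^\h\circ D\xi|^2 \qquad \text{a.e.\ on }\M,
\]
and the observation (recorded after \eqref{eq:Dxi_coord}) that $d\jmath$, rather than $\iota$, is the \emph{fiberwise} linear isometry, so that $|d\jmath\circ\eta|_\euc = |\eta|_\h$ for any $\eta:\M\to T\N$ and, more generally, $|d\jmath\circ K^\h\circ D\xi| = |K^\h\circ D\xi|$. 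Recall that, by definition, $\xi_n\to\xi$ in $W^{1,p}(\M;T\N)$ means $\iota\circ\xi_n\to\iota\circ\xi$ in $L^p(\M;\R^D)$ together with $d\iota\circ D\xi_n\to d\iota\circ D\xi$ in $L^p(\M;T^*\M\otimes\R^D)$. Since $p>\dim\M$, every $W^{1,p}(\M;T\N)$-bounded sequence subconverges uniformly and its values remain in a fixed compact subset of $T\N$; on such compact sets $\iota$, $\jmath$, $D\iota$, $D\jmath$ are homeomorphisms onto their images, which is what lets me move a.e.\ convergence freely between the various Euclidean representations, exactly as in Lemmata~\ref{lem:f_convergence} and \ref{lem:connector_convergence}.

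For the forward implication, assume $\xi_n\to\xi$ in $W^{1,p}(\M;T\N)$. Then $f_{\xi_n}\to f_\xi$ in $W^{1,p}(\M;\N)$ by Lemma~\ref{lem:f_convergence}, in particular $Df_{\xi_n}\to Df_\xi$ in $L^p$; and $d\jmath\circ\xi_n\to d\jmath\circ\xi$ (in fact uniformly) because $\xi_n\to\xi$ uniformly and $d\jmath$ is continuous. Passing to a subsequence, $\iota\circ\xi_n\to\iota\circ\xi$ and $d\iota\circ D\xi_n\to d\iota\circ D\xi$ a.e., hence $D\iota\circ D\xi_n\to D\iota\circ D\xi$ a.e., hence (as $D\iota$ is an embedding) $D\xi_n\to D\xi$ a.e., and therefore $d\jmath\circ K^\h\circ D\xi_n\to d\jmath\circ K^\h\circ D\xi$ a.e. To promote this last convergence to $L^p$ I would invoke the refinement of the dominated convergence theorem \cite[Theorem~1.20]{EG15}: by the Sasaki splitting, $|d\jmath\circ K^\h\circ D\xi_n|^p = |K^\h\circ D\xi_n|^p\le |d\iota\circ D\xi_n|^p =: g_n$, with $g_n\to |d\iota\circ D\xi|^p$ a.e.\ and $\int_\M g_n\,\Volume\to\int_\M |d\iota\circ D\xi|^p\,\Volume$ by the assumed $W^{1,p}$-convergence. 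A standard subsequence-of-a-subsequence argument then returns the conclusion for the full sequence.

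For the converse, assume the three convergences on the right; I would first extract uniform boundedness of $\xi_n$ (which is not assumed). Indeed, $d\jmath\circ\xi_n\to d\jmath\circ\xi$ in $L^p$ bounds $|\xi_n|_\h = |d\jmath\circ\xi_n|$ in $L^p$, and since $\iota$ is an isometric embedding and the vertical segment from the zero section to $\xi_n(q)$ has $S^\h$-length $|\xi_n(q)|_\h$, one gets $|\iota\circ\xi_n - \jmath\circ f_{\xi_n}| \le |\xi_n|_\h$, so $\iota\circ\xi_n$ is $L^p$-bounded; likewise, by the Sasaki splitting $|d\iota\circ D\xi_n|^2 = |Df_{\xi_n}|^2 + |d\jmath\circ K^\h\circ D\xi_n|^2$ is $L^p$-bounded. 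Thus $\iota\circ\xi_n$ is bounded in $W^{1,p}(\M;\R^D)$, and since $p>\dim\M$ a subsequence converges uniformly; along it $\xi_n\to\xi$ uniformly. Now the coordinate computation in the proof of Lemma~\ref{lem:connector_convergence} applies: with $(x_n,v_n)$ the local coordinate representation of $\xi_n$, we have $x_n\to x$ and $v_n\to v$ uniformly, $Dx_n\to Dx$ in $L^p$ (from $f_{\xi_n}\to f_\xi$ in $W^{1,p}$), and $Dv_n + \Gamma(x_n)[v_n,Dx_n]\to Dv + \Gamma(x)[v,Dx]$ in $L^p$ (from $d\jmath\circ K^\h\circ D\xi_n\to d\jmath\circ K^\h\circ D\xi$ in $L^p$); since $\Gamma(x_n)[v_n,Dx_n]\to\Gamma(x)[v,Dx]$ in $L^p$ by the uniform convergence of $(x_n,v_n)$, this forces $Dv_n\to Dv$ in $L^p$. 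As $\iota\circ\xi_n$ and $d\iota\circ D\xi_n$ are, in these coordinates, smooth functions of $(x_n,v_n)$ and linear in $(Dx_n,Dv_n)$ with coefficients smooth in $(x_n,v_n)$, the convergences just established give $\iota\circ\xi_n\to\iota\circ\xi$ and $d\iota\circ D\xi_n\to d\iota\circ D\xi$ in $L^p$ on each coordinate patch, hence $\xi_n\to\xi$ in $W^{1,p}(\M;T\N)$ along the subsequence, and then for the full sequence.

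The main obstacle is organizational rather than deep. The two first-order objects $D\xi_n$ and $K^\h\circ D\xi_n$ are naturally recorded through \emph{different} Euclidean embeddings ($\iota$ vs.\ $\jmath$) and live in fibers over the moving base points $f_{\xi_n}(q)$, so reconciling them needs either the coordinate computation of Lemma~\ref{lem:connector_convergence} or the uniform-convergence/compactness afforded by $p>\dim\M$; and in the converse one must manufacture the missing uniform bound on $\xi_n$ out of the three convergences, via the Sasaki splitting and Rellich. Once a.e.\ convergence is transported freely between the Euclidean pictures, the only genuine analytic input is the refined dominated convergence theorem applied to the Sasaki splitting — which is precisely what \cite[Prop.~D.1]{AKM22} packages.
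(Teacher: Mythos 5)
Your argument is correct, but note that the paper does not actually prove this proposition: it is dispatched in one line as ``an immediate application of \cite[Prop.~D.1]{AKM22}'', so what you have written is a genuine self-contained derivation rather than a reproduction of an in-paper argument. Your route is exactly the one the surrounding material suggests: the forward implication recycles the a.e.-subsequence plus refined dominated convergence scheme of Lemma~\ref{lem:f_convergence}, with the Sasaki splitting supplying the dominating functions $|d\iota\circ D\xi_n|^p$, and the converse recycles the coordinate computation of Lemma~\ref{lem:connector_convergence} to disentangle $Dv_n$ from the Christoffel correction. What this buys is independence from the black-boxed reference at the cost of two small steps you should make explicit in the converse. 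First, the jump from ``a subsequence of $\iota\circ\xi_n$ converges uniformly'' to ``along it $\xi_n\to\xi$ uniformly'' requires identifying the limit: a priori the uniform limit is some continuous $u$, and one must check that $\eta=\iota^{-1}\circ u$ satisfies $\pi\circ\eta=f_\xi$ and $d\jmath\circ\eta=d\jmath\circ\xi$ a.e.\ (by passing to a further a.e.-convergent subsequence of the hypothesized $L^p$ limits), whence $\eta=\xi$ since a tangent vector is determined by its foot point and its image under the fiberwise injective map $d\jmath$, and since $\xi$ is continuous for $p>\dim\M$. Second, your standing claim that the values of a $W^{1,p}$-bounded sequence stay in a fixed compact subset of the non-compact manifold $T\N$ deserves a word: it follows, for instance, from the pointwise bound of the weak gradient of $q\mapsto|\xi_n(q)|_\h$ by $|K^\h\circ D\xi_n|$ together with Morrey's inequality, which confines the images to some $T^\rho\N$. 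Both points are routine, and the rest of the proposal (the $L^p$ bound on $\iota\circ\xi_n$ via the vertical-segment estimate, the domination argument, and the subsequence-of-subsequences closure) is sound.
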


\paragraph{Sobolev immersions}
Let $\dim\M=d$ and $\dim\N=d+1$,
and let $f\in W^{1,p}(\M;\N)$ satisfy $\rank df=d$ a.e. 
We may define a.e. a map $\hn_f\in L^\infty(\M;T\N$) covering $f$, such that $\hn_f(q)\in T_{f(q)}\N$ is a unit vector, normal to the image of $Df_q$, and for every oriented basis $(u_1,\dots,u_d)\subset T_q\M$, 
\[
(df_q(u_1),\dots,df_q(u_d),\hn_f(q))\subset T_{f(q)}\N
\]
is an oriented basis.
If $\hn_f \in W^{1,p}(\M;T\N)$, we say that $f$ belongs to the set of \Emph{$p$-Sobolev immersions} (this is the definition in  \eqref{eq:imm_p}). 
Since there is a one-to-one correspondence between $f$ and $\hn_f$, $\Imm_p(\M;\N)$ can be identified with a subset of 
$W^{1,p}(\M;T\N)$, and specifically a subset of $W^{1,p}(\M;S\N)$, where $S\N\subset T\N$ is the sphere bundle of $\N$. 
It is however not a closed subset, as the rank condition is not closed under $W^{1,p}$-convergence.

For $f\in\Imm_p(\M;\N)$, the total energy \eqref{eq:Etotal} is more rigorously written as
\beq
\E_p(f) = \int_\M \brk{\dist^p(Df,\O(\g,\h)) + |Df\circ S + K^\h\circ D\hn_f|^p}\,\Volume,
\label{eq:Etotal2}
\eeq
as this notation does not rely on non-smooth pullback bundles and their corresponding pullback connections.

\paragraph{Trivializations} 

Let $\xi_n\in W^{1,p}(\M;T\N)$ be a sequence converging to $\xi\in W^{1,p}(\M;T\N)$ uniformly (hence in particular, $f_{\xi_n} \to f_\xi$ uniformly).
The compactness of $\M$ and $\N$ implies the existence of finite open covers $\calU = \{U_i\}$ and $\calV = \{V_i\}$ of $\M$ and $\N$ such that $T\N|_{V_i}$ is a trivial bundle for every $i$, and such that $f_{\xi_n}(U_i)\subset V_i$ for $n$ large enough. 
For every such $U$ and $V$ (to simplify notations we omit the index $i$), let $R:V\to\SO(\h,\R^{\dim\N})$ be a smooth orthonormal frame (which may or may not be a coordinate frame), namely, for every $q\in V$ and $\eta,\xi\in T_q\N$,
\[
\ip{\eta,\xi}_\h = \ip{R \circ \eta,R \circ \xi}_{\euc}.
\]
Such a frame exists by the triviality of $T\N|_V$.
Then, restricting to $U$, 
\[
R \circ Df_{\xi_n}, R \circ Df_\xi \in L^p\Omega^1(U;\N\times \R^{\dim\N}),
\]
where $R$ acts on the vector part $df_{\xi_n}$ of $Df_{\xi_n}$.
Note that $R\circ Df_{\xi_n} - R\circ Df_\xi \ne R\circ(Df_{\xi_n} - Df_\xi)$; in fact, the right-hand side is not well-defined, as the images of $\xi_n$ and $\xi$ do not belong to the same fiber of $T\N$. 

\begin{proposition}
\label{prop:trivialization}
Assume that $p>\dim \M$. Then,
\[
f_{\xi_n} \to f_\xi 
\qquad\text{in $W^{1,p}(\M;\N)$}
\]
if and only if
\[
R\circ Df_{\xi_n} \to R \circ Df_\xi 
\qquad\text{in $L^p\Omega^1(U;\N\times\R^{\dim\N})$}
\]
for every $U\in\calU$.
Likewise, if 
\[
f_{\xi_n} \wc f_\xi 
\qquad\text{in $W^{1,p}(\M;\N)$}
\]
then
\[
R\circ Df_{\xi_n} \wc R \circ Df_\xi 
\qquad\text{in $L^p\Omega^1(U;\N\times\R^{\dim\N})$}
\]
for every $U\in\calU$.
\end{proposition}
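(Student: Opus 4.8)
The plan is to reduce both statements to the corresponding facts about $W^{1,p}(U;\R^D)$ via the Euclidean embedding $\jmath:\N\to\R^D$, exploiting the fact that $p>\dim\M$ puts us in the localizable regime where uniform convergence and coordinate-wise convergence are available. First I would fix $U\in\calU$ and $V\in\calV$ with $f_{\xi_n}(U)\subset V$ for $n$ large, and the smooth orthonormal frame $R:V\to\SO(\h,\R^{\dim\N})$. The key observation is that on $U$ we may write $R\circ Df_{\xi_n} = \tilde R(f_{\xi_n})\cdot d(\jmath\circ f_{\xi_n})$, where $\tilde R:V\to\Hom(\R^D,\R^{\dim\N})$ is the smooth bundle map that on $T_q\N\subset\R^D$ (via $d\jmath_q$) acts as $R_q$ and kills the normal directions; concretely $\tilde R(q) = R_q\circ(d\jmath_q)^{-1}\circ\Pi_q$ where $\Pi_q$ is orthogonal projection onto $d\jmath_q(T_q\N)$. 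Since $R$ and $\jmath$ are smooth and $\N$ is compact, $\tilde R$ is smooth and bounded with bounded derivatives on the compact set $\overline V$.

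Next, for the ``only if'' direction of the first equivalence: assume $f_{\xi_n}\to f_\xi$ in $W^{1,p}(\M;\N)$, i.e.\ $\jmath\circ f_{\xi_n}\to\jmath\circ f_\xi$ in $W^{1,p}(\M;\R^D)$. Then on $U$ we have $d(\jmath\circ f_{\xi_n})\to d(\jmath\circ f_\xi)$ in $L^p$, and since $p>\dim\M$ the maps $f_{\xi_n}\to f_\xi$ uniformly, so $\tilde R(f_{\xi_n})\to \tilde R(f_\xi)$ uniformly by continuity of $\tilde R$. A standard product estimate — write $\tilde R(f_{\xi_n})d(\jmath\circ f_{\xi_n}) - \tilde R(f_\xi)d(\jmath\circ f_\xi)$ as $(\tilde R(f_{\xi_n})-\tilde R(f_\xi))d(\jmath\circ f_{\xi_n}) + \tilde R(f_\xi)(d(\jmath\circ f_{\xi_n})-d(\jmath\circ f_\xi))$, bound the first term in $L^p$ by the sup-norm of $\tilde R(f_{\xi_n})-\tilde R(f_\xi)$ times the bounded $L^p$-norm of $d(\jmath\circ f_{\xi_n})$, and the second by the sup-bound on $\tilde R$ times the $L^p$-convergence — gives $R\circ Df_{\xi_n}\to R\circ Df_\xi$ in $L^p\Omega^1(U;\N\times\R^{\dim\N})$. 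For the ``if'' direction, assume $R\circ Df_{\xi_n}\to R\circ Df_\xi$ in $L^p$ for every $U\in\calU$. Since $R$ is a fiberwise isometry onto $\R^{\dim\N}$, $|Df_{\xi_n}| = |R\circ Df_{\xi_n}|$ pointwise, so this gives convergence of $|df_{\xi_n}|$ in $L^p(U)$; using that $d(\jmath\circ f_{\xi_n}) = d\jmath_{f_{\xi_n}}\circ Df_{\xi_n}$ and that $d\jmath$ is a fiberwise isometry, together with uniform convergence $f_{\xi_n}\to f_\xi$ (which we get for free: $R\circ Df_{\xi_n}\to R\circ Df_\xi$ in $L^p$ for all charts plus boundedness yields a bounded sequence in $W^{1,p}$, hence a uniformly convergent subsequence, and $f_\xi = f_{\xi_n}$ in the limit by Lemma~\ref{lem:f_convergence}; uniqueness of the limit promotes this to the full sequence), we recover $d(\jmath\circ f_{\xi_n})\to d(\jmath\circ f_\xi)$ in $L^p(U;\R^D)$. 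Patching over the finite cover $\calU$ with a partition of unity, and noting $\jmath\circ f_{\xi_n}\to\jmath\circ f_\xi$ in $L^p$ already follows from uniform convergence, gives $\jmath\circ f_{\xi_n}\to\jmath\circ f_\xi$ in $W^{1,p}(\M;\R^D)$, i.e.\ $f_{\xi_n}\to f_\xi$ in $W^{1,p}(\M;\N)$.

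For the weak statement, assume $f_{\xi_n}\wc f_\xi$ in $W^{1,p}(\M;\N)$, i.e.\ $\jmath\circ f_{\xi_n}\wc\jmath\circ f_\xi$ in $W^{1,p}(\M;\R^D)$. By compactness of the embedding $W^{1,p}\hookrightarrow C^0$ for $p>\dim\M$, $f_{\xi_n}\to f_\xi$ uniformly, so again $\tilde R(f_{\xi_n})\to\tilde R(f_\xi)$ uniformly. Then $R\circ Df_{\xi_n} = \tilde R(f_{\xi_n})\,d(\jmath\circ f_{\xi_n})$ is a product of a uniformly convergent sequence of smooth bounded bundle maps with an $L^p$-weakly convergent sequence; the product converges weakly in $L^p$ to $\tilde R(f_\xi)\,d(\jmath\circ f_\xi) = R\circ Df_\xi$ (test against $L^{p'}$: the uniform convergence lets one replace $\tilde R(f_{\xi_n})$ by $\tilde R(f_\xi)$ up to an error that vanishes, then use weak convergence of $d(\jmath\circ f_{\xi_n})$). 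This gives the claimed weak convergence on each $U\in\calU$.

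The main obstacle — and the only genuinely non-formal point — is the ``if'' direction of the norm statement: reconstructing $W^{1,p}$-convergence of $\jmath\circ f_{\xi_n}$ on all of $\M$ from the frame-convergence on each chart. The subtlety is that the frame identity is only local, so one must first extract uniform convergence to pin down the limit and identify the targets $T_{f_{\xi_n}(q)}\N$ with the limiting fiber in a controlled way (otherwise ``$R\circ Df_{\xi_n}\to R\circ Df_\xi$'' involves quantities living in different fibers of $T\N$, exactly the issue flagged in the paragraph preceding the proposition), and then patch. Here the hypothesis $p>\dim\M$ is essential: it is what makes $f_{\xi_n}\to f_\xi$ uniform and hence makes the chart bookkeeping legitimate. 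Everything else is the routine product-rule and partition-of-unity bookkeeping sketched above, of the same flavor as the proofs of Lemmas~\ref{lem:f_convergence} and \ref{lem:connector_convergence}.
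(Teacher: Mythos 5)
Your proposal is correct and follows essentially the same route as the paper: both rest on writing $R\circ Df_{\xi_n}$ as a smoothly footpoint-dependent linear factor applied to the differential, using the uniform convergence $f_{\xi_n}\to f_\xi$ (guaranteed by $p>\dim\M$ and indeed part of the standing hypothesis of the trivialization setup) to control that factor, and then running the standard product argument for strong/weak $L^p$ convergence as in Lemma~\ref{lem:connector_convergence}. The only difference is presentational — you phrase the decomposition via the global embedding $\jmath$ and the bundle map $\tilde R$ rather than in local coordinates — and your explicit treatment of the ``if'' direction (reconstructing $d(\jmath\circ f_{\xi_n})$ from the frame representation and patching over the finite cover) is a correct filling-in of what the paper leaves implicit.
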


\begin{proof}
Since $R\circ Df_{\xi_n}(x) = (f_{\xi_n}(x), R_{f_{\xi_n}(x)} \circ (df_{\xi_n})_x)$, and $R_q$ is a linear map depending smoothly on the footpoint, and the footpoints $f_{\xi_n}$ converge uniformly to $f_\xi$ by assumption, the proof follows from the same arguments as the one of Proposition~\ref{lem:connector_convergence}.
\end{proof}

\subsection{Coordinate representation of the energy}\label{sec:coordinates}

In this section, we present some of geometric constructs and the energy \eqref{eq:Etotal} in coordinates for the benefit of readers who are more used to this notation. 
For simplicity, assume that both $\M$ and $\N$ are covered by single coordinate charts, 
\[
x_\M:\M\to\R^d
\Textand
x_\N:\N\to\R^{d+1}.
\]
A map $f:\M\to\N$ is represented by a map $\R^d\supset x_\M(\M) \to \R^{d+1}$, 
\[
x \mapsto x_\N\circ f \circ x_\M^{-1},
\]
with components $f^\alpha$, where for the sake of clarity, we use Latin indexes for the body manifold $\M$, and Greek indexes for the space manifold $\N$.

Throughout this section, we will denote the projection of a vector bundle onto its base by $\pi$, where the type (e.g., $T\M\to\M$ or $TT\N\to T\N$) should be clear from the context.
The tangent bundle $T\M$ has coordinates $(x_{\M},\dot{x}_{\M}):T\M\to\R^d\times\R^d$,  where, with a slight abuse of notations which will be used repeatedly, $x_{\M} = x_\M\circ\pi$  and $\dot{x}_{\M} = dx_\M$; the metric $\g$ is represented by a matrix-valued function $x_\M(\M)\to \R^d\otimes\R^d$, with entries $\g_{ij}$, such that for $V \in T\M$,
\[
|V|^2 = \g_{ij}(x_{\M}(V))\, \dot{x}_{\M}^i(V) \dot{x}_{\M}^j(V),
\] 
with the Einstein summation convention over repeated indexes. 
Similarly, the tangent bundle $T\N$ has coordinates $(x_{\N},\dot{x}_{\N}):T\N\to\R^{d+1}\times\R^{d+1}$; the metric $\h$ is  represented by a matrix-valued function $x_\N(\N)\to\R^{d+1}\otimes\R^{d+1}$, with entries $\h_{\alpha\beta}$, so that for a vector $W\in T\N$,
\[
|W|^2 = \h_{\alpha\beta}(x_{\N}(W))\, \dot{x}_{\N}^\alpha(W) \dot{x}_{\N}^\beta(W).
\] 

For $f:\M\to\N$, the fiber of the pullback bundle $f^*T\N$ at a point $p\in\M$ is canonically identified with the fiber $T_{f(p)}\N$; its coordinates are $(x_{\M},\dot{x}_{\N}):f^*T\N\to\R^d\times\R^{d+1}$, where $x_{\M} = x_{\M}\circ\pi$ and $\dot{x}_{\N}$ 
is identified with $\dot{x}_\N: T\N\to\R^{d+1}$, via the canonical identification of fibers of $f^*T\N$ with fibers of $T\N$. The coordinate representation of $df\in\Gamma(f^*T\N)$ is the map $x_\M(\M)\to \Hom(\R^d,\R^{d+1})$ given by
$\partial_if^\alpha$, 
whereas the coordinate representation of $Df:T\M\to T\N$ is the map $x_\M(\M)\to \R^{d+1}\times\Hom(\R^d,\R^{d+1})$, given by $(f^\alpha,\partial_if^\alpha)$.

We denote by $\g^{1/2}: x_\M(\M)\to\Hom(\R^d,\R^d)$ and $\h^{1/2}:x_\N(\N)\to\Hom(\R^{d+1},\R^{d+1})$ the symmetric positive-definite square roots of the matrices $\g$ and $\h$, i.e.,
\[
\g_{ij} = \delta_{kl} (\g^{1/2})_i^k  (\g^{1/2})_j^\ell
\Textand
\h_{\alpha\beta} = \delta_{\gamma\eta} (\h^{1/2})_\alpha^\gamma  (\h^{1/2})_\beta^\eta.
\]
The stretching energy of $f:\M\to\N$ takes the form
\[
\E_p^S(f) = \int_{x_\M(\M)} \dist^p(Q(x),O(d)) \det(\g^{1/2}(x))\, dx,
\]
where
\[
Q^\alpha_i(x) = (\h^{1/2}(f(x)))_\beta^\alpha \, \partial_j f^\beta(x) \,(\g^{-1/2}(x))_i^j,
\]
or, in matrix form,
\[
Q(x) = h^{1/2}(f(x)) \circ df(x) \circ \g^{-1/2}(x),
\]
and the distance here is with respect to the Euclidean Frobenius norm. 

We proceed with the double tangent $TT\N$, which has coordinates
\[
(x_{\N},\dot{x}_{\N},v_{\N},\dot{v}_{\N}):TT\N\to\R^{d+1}\times\R^{d+1}\times\R^{d+1}\times\R^{d+1},
\]
where $x_{\N} = x_{\N}\circ\pi$, $\dot{x}_{\N} = \dot{x}_{\N}\circ \pi$, $v_{\N} = \dot{x}_{\N}\circ D\pi$ and $\dot{v}_{\N} = d\dot{x}_{\N}$.
That is, a vector in $T_{(x_\N,\dot{x}_\N)}T\N$ represents a direction of change, in $T\N$, from the point $(x_\N,\dot{x}_\N)$; the coordinate $v_{\N}$ represents the direction of change in the coordinate $x_\N$, and $\dot{v}_{\N}$ the direction of change in the coordinate $\dot{x}_{\N}$.

We further denote by $\Gamma: x_\N(\N)\to \Hom(\R^{d+1}\otimes\R^{d+1},\R^{d+1})$  the Christoffel symbols of the Levi-Civita connection, with coordinates $\Gamma^\alpha_{\beta\gamma}$; the connection map $K^\h:TT\N\to T\N$ is defined by,
\[
x_{\N}\circ K^\h = x_{\N}
\Textand
\dot{x}_{\N}\circ K^\h = \dot{v}_{\N} + \Gamma(x_{\N})[\dot{x}_{\N},v_{\N}].
\]

Let $\xi,Y\in \frakX(\N)$ be vector fields having coordinate representation $x_\N(\N)\to \R^{d+1}$,
\[
\xi^\alpha = \dot{x}_\N^\alpha\circ \xi \circ x_{\N}^{-1}
\Textand
Y^\alpha = \dot{x}_\N^\alpha\circ Y \circ x_{\N}^{-1}.
\]
The coordinate representation of $D\xi(Y):\N\to TT\N$ is the map $x_\N(\N) \to \R^{d+1}\times \R^{d+1}\times \R^{d+1}$, given by
\[
(Y^\alpha,\xi^\alpha, Y^\beta\, \partial_\beta\xi^\alpha).
\]
Thus, the coordinate representation of $K^\h\circ D\xi(Y):\N\to T\N$ is the map $x_\N(\N)\to\R^{d+1}$, given by
\[
Y^\beta\, \partial_\beta\xi^\alpha +  \Gamma^\alpha_{\beta\gamma}(x) Y^\beta \xi^\gamma,
\]
thus showing that indeed $K^\h\circ D\xi(Y) = \nabla_Y\xi$.

We proceed to write the bending energy in explicit form. The reference shape operator $S$ is represented by a map $x_\M(\M)\to\Hom(\R^d,\R^d)$ with indexes $S_i^j$. The unit normal $\hn_f$ is represented by a function $x_\M(\M)\to \R^{d+1}$, such that
\[
(\partial_1 f^\alpha(x),\dots,\partial_df^\alpha(x),\hn_f^\alpha(x))
\]
is an oriented basis for $\R^{d+1}$, and $\hn_f(x)$ is a unit vector normal to range of $\nabla f(x)$ with respect to the inner-product $\h_{\alpha\beta}(f(x))$.  
With this we have
\[
\E_p^B(f) = \int_{x_\M(\M)} \brk{\g^{ij}(x) \h_{\alpha\beta}(f(x)) A_i^\alpha(x) A_j^\beta(x)}^{p/2} \,\det(\g^{1/2}(x))\, dx,
\]
where 
\[
A_i^\alpha(x) = \partial_i \hn_f^\alpha(x) + \Gamma^\alpha_{\beta\gamma}(f(x)) \partial_i f^\beta(x) \hn_f^\gamma(x) + \partial_i f^\gamma(x) S_\gamma^\alpha(x),
\]
represents the discrepancy between the target shape operator $S$ and the shape operator of $f$ as defined in \eqref{eq:S_f}.

\section{Proof of \thmref{thm:main}}\label{sec:main_pf}

\subsection{The relaxed functional}\label{sec:relaxed}

Denote as above $d = \dim \M$.
Consider the vector bundle of rank $d+1$, $T\M\oplus \R\to\M$ endowed with the product metric 
\[
\ip{(v,s),(w,t)}_G = \ip{v,w}_\g + st,
\qquad\text{for $q\in\M$, $v,w\in T_q\M$ and $s,t\in\R$}.
\]
The orientation of $T\M\oplus\R$ is induced by the orientation of $T\M$: if $(u_1,\dots,u_d)$ is an oriented basis for $T_q\M$, then, $(u_1,\dots,u_d,1)$ is an oriented basis for $T_q\M\oplus\R$.

For $\xi\in W^{1,p}(\M;T\N)$ we introduce the map
\[
Df_\xi\oplus \xi \in L^p(\M;(T\M \oplus \R)^*\otimes T\N),
\] 
which is a linear bundle morphism covering $f_\xi$ defined a.e.\ by
\[
(Df_\xi\oplus \xi)_q(v,t) = d_qf_\xi(v) + t\, \xi(q) \in T_{f_\xi(q)}\N,
\]  
where $q\in\M$, $v\in T_q\M$ and $t\in \R$. That is, on the left-hand side we identify $\xi(q)\in T_{f_\xi(q)}\N$ with $\xi(q)\in\Hom(\R,T_{f_\xi(q)}\N)$.

\begin{definition}\label{def:relax_en}
The \Emph{relaxed energy functional} $\tE_p : W^{1,p}(\M;T\N)\to [0,\infty)$ is defined by
\[
\begin{split}
\tE_p(\xi)  &= \int_\M \dist^p(Df_\xi\oplus \xi,\SO(G,\h))\,\Volume + 
\int_\M |Df_\xi\circ S + K^\h\circ D\xi|^p\,\Volume \\
&\equiv \tE_p^S(\xi) + \tE_p^B(\xi).
\end{split}
\]
\end{definition}

\begin{proposition}
\label{prop:tE_p}
The functional $\tE_p$ is a relaxation of $\E_p$, in the sense that
\[
\E_p(f) = \tE_p(\hn_f) 
\qquad
\text{for all $f\in \Imm_p(\M,\N)$}.
\]
Furthermore $\tE_p^S(\xi) = 0$ if and only if $f_\xi \in \Imm_p(\M;\N)$, $\xi= \hn_{f_\xi}$ and $Df\in \O(\g,\h)$ a.e.
\end{proposition}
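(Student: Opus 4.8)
The plan is to reduce both assertions to a pointwise identity in linear algebra and then integrate. Fix $q\in\M$ and abbreviate $A:=d_qf_\xi\in\Hom(T_q\M,T_{f_\xi(q)}\N)$, $n:=\xi(q)$, and $\hat A:=A\oplus n$, so that $(Df_\xi\oplus\xi)_q=\hat A$; equip $V:=T_q\M$ and $W:=T_{f_\xi(q)}\N$ with the inner products $\g_q,\h_{f_\xi(q)}$ and their orientations, and $V\oplus\R$ with the product inner product $G_q$ and the induced orientation. Observe first that for $\xi=\hn_f$ one has $f_{\hn_f}=\pi\circ\hn_f=f$, so the bending integrands coincide, $\tE_p^B(\hn_f)=\E_p^B(f)$ (compare \eqref{eq:Etotal2}); hence the first claim reduces entirely to the stretching terms, while $\tE_p^B$ plays no role in the second claim.

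For the first claim I would prove, at every $q$ with $\rank d_qf=d$ (hence a.e., since $f\in\Imm_p$), the pointwise identity
\[
\dist\brk{d_qf,\O(\g_q,\h_{f(q)})}=\dist\brk{d_qf\oplus\hn_f(q),\SO(G_q,\h_{f(q)})}.
\]
Its mechanism: every $R\in\SO(G,\h)$ decomposes uniquely as $R=B\oplus m$ with $B:=R|_{V\oplus 0}\in\O(\g,\h)$ and $m:=R(0,1)$ a unit vector $\h$-orthogonal to $\image B$, chosen so that $B\oplus m$ preserves orientation; conversely every such pair gives an element of $\SO(G,\h)$. Computing the norm $|\cdot|$ in a $G$-orthonormal oriented basis of $V\oplus\R$ of the form $(e_1,\dots,e_d,1)$ yields $|\hat A-R|^2=|A-B|^2+|n-m|^2$, from which $\dist(\hat A,\SO(G,\h))\ge\dist(A,\O(\g,\h))$ is immediate. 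For the reverse inequality I would use the singular value decomposition of the injective map $A$: the minimum of $|A-B|$ over the compact set $\O(\g,\h)$ is attained at a $B^\star$ taking an oriented $\g$-orthonormal basis of $V$ to the $\h$-orthonormal frame of $\image A$ furnished by the SVD, so $\image B^\star=\image A$; the defining orientation convention of the Gauss map then forces the normal $m^\star$ associated with $B^\star\oplus m^\star\in\SO(G,\h)$ to equal $\hn_f(q)$, so the term $|n-m^\star|^2$ vanishes and $\dist(\hat A,\SO(G,\h))\le|\hat A-(B^\star\oplus m^\star)|=|A-B^\star|=\dist(A,\O(\g,\h))$. Raising the identity to the $p$-th power, integrating over $\M$, and adding the coinciding bending terms gives $\E_p(f)=\tE_p(\hn_f)$.

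For the second claim, $\tE_p^S(\xi)=0$ holds if and only if $(Df_\xi\oplus\xi)_q\in\SO(G_q,\h_{f_\xi(q)})$ for a.e.\ $q$ (the integrand is nonnegative and $\SO(G,\h)$ is closed). I would then read off what membership in $\SO(G,\h)$ means for a morphism of the special shape $A\oplus n$: restricting to $V\oplus 0$ forces $d_qf_\xi\in\O(\g_q,\h_{f_\xi(q)})$, in particular $\rank d_qf_\xi=d$; evaluating at $1$ forces $\xi(q)$ to be a unit vector $\h$-orthogonal to $\image d_qf_\xi$; and the orientation-preserving clause, together with the orientation convention on $T\M\oplus\R$, forces $\xi(q)=\hn_{f_\xi}(q)$. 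Since $\xi\in W^{1,p}(\M;T\N)$, \lemref{lem:f_convergence} gives $f_\xi\in W^{1,p}(\M;\N)$; combined with $\hn_{f_\xi}=\xi\in W^{1,p}(\M;T\N)$ and the a.e.\ rank condition just obtained, this yields $f_\xi\in\Imm_p(\M;\N)$ by \eqref{eq:imm_p}. Conversely, if $f_\xi\in\Imm_p$, $\xi=\hn_{f_\xi}$ and $Df_\xi\in\O(\g,\h)$ a.e., then at a.e.\ $q$ the map $d_qf_\xi\oplus\hn_{f_\xi}(q)$ sends the $G$-orthonormal oriented basis $(e_1,\dots,e_d,1)$ to the $\h$-orthonormal oriented basis $(d_qf_\xi(e_1),\dots,d_qf_\xi(e_d),\hn_{f_\xi}(q))$, hence lies in $\SO(G_q,\h_{f_\xi(q)})$, so $\tE_p^S(\xi)=0$.

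The analytic ingredients here---measurability of the integrands, the passage from the pointwise identity to the integral one, and the Sobolev regularity of $f_\xi$ and $\hn_{f_\xi}$---are routine given \lemref{lem:f_convergence}. The genuinely delicate point, which I regard as the heart of the argument, is the orientation bookkeeping inside the pointwise identity: checking that the unit normal $m^\star$ attached to the optimal isometry $B^\star$ is the Gauss-map normal $\hn_f(q)$ and not its opposite, and, symmetrically, that the orientation-preserving clause in $\SO(G,\h)$ pins $\xi(q)$ down to $\hn_{f_\xi}(q)$ rather than $-\hn_{f_\xi}(q)$.
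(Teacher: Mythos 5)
Your proposal is correct and follows essentially the same route as the paper: the paper also reduces the first claim to the pointwise linear-algebraic identity $\dist(Df\oplus\hn_f,\SO(G,\h))=\dist(Df,\O(\g,\h))$ and proves the second claim by restricting an element of $\SO(G,\h)$ to $T\M\oplus 0$ and evaluating it on $(0,1)$. The only difference is that the paper merely states the linear-algebra lemma without proof, whereas you supply a complete argument for it (the decomposition $R=B\oplus m$, the splitting $|\hat A-R|^2=|A-B|^2+|n-m|^2$, and the polar-decomposition/orientation check that the optimal $m^\star$ equals $\hn_f(q)$), all of which is correct.
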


\begin{proof}
We first prove the second claim. Let $\tE_p^S(\xi) = 0$, i.e., $Df_\xi\oplus\xi\in \SO(G,\h)$ a.e.
Let $v\in T\M$. Then,
\[
|Df_\xi(v)|_\h = |(Df_\xi\oplus\xi)(v,0)|_\h = |(v,0)|_G = |v|_\g, 
\]
thus proving that $Df_\xi \in \O(\g,\h)$. In particular, $Df_\xi$ has full rank a.e. Moreover, a.e., and for every $v\in T\M$,
\[
\ip{Df_\xi(v),\xi}_\h = \ip{(Df_\xi\oplus\xi)(v,0),(Df_\xi\oplus\xi)(0,1)}_\h = \ip{(v,0),(0,1)}_G = 0,
\]
whereas
\[
\ip{\xi,\xi}_\h = \ip{(Df_\xi\oplus\xi)(0,1),(Df_\xi\oplus\xi)(0,1)}_\h = \ip{(0,1),(0,1)}_G = 1,
\]
i.e., $\xi = \hn_{f_\xi}$ and since $\xi\in W^{1,p}(\M;T\N)$, it follows that $f_\xi\in \Imm_p(\M;\N)$.  The other direction is immediate.

The first claim follows from the next lemma.
\end{proof}

\begin{lemma}
For $f\in \Imm_p(\M,\N)$, the following identity holds,
\[
\dist(Df\oplus \hn_f,\SO(G,\h)) = \dist(Df,\O(\g,\h)).
\]
\end{lemma}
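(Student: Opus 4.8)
The plan is to compute both distances pointwise, at a single point $q\in\M$, by choosing a convenient decomposition of the target $T_{f(q)}\N$ and then reducing the problem to linear algebra of block-structured matrices.

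Fix $q$ and write $V = T_q\M$ with inner product $\g_q$, and $W = T_{f(q)}\N$ with inner product $\h_{f(q)}$. Since $f\in\Imm_p$, at almost every $q$ the differential $df_q:V\to W$ has rank $d$, and $\hn_f(q)$ is a unit vector orthogonal to $\image df_q$; so $W$ splits orthogonally as $W = \image df_q \oplus \R\hn_f(q)$. The linear map $A := Df_q\oplus\hn_f(q): V\oplus\R \to W$ sends $(v,t)\mapsto df_q(v) + t\,\hn_f(q)$, which in these adapted decompositions is block-diagonal: it restricts to $df_q:V\to\image df_q$ on the first summand and to the identity $\R\to\R\hn_f(q)$ on the second. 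The key observation is that $\SO(G_q,\h_{f(q)})$ consists exactly of the orientation-preserving isometries $V\oplus\R\to W$, and among these we may restrict attention — for the purpose of computing the distance to $A$ — to the ones that respect the same splitting, sending $\{0\}\oplus\R$ to $\R\hn_f(q)$ preserving orientation (hence acting as $+1$ there) and $V\oplus\{0\}$ isometrically onto $\image df_q$. Indeed, an arbitrary isometry $R$ can only be moved closer to $A$ by post-composing/replacing with its "best" block-respecting competitor, since in an orthonormal basis adapted to both splittings, $|A-R|^2$ decomposes into contributions from the diagonal blocks and off-diagonal blocks, and the off-diagonal contributions are minimized (to zero) by a block-diagonal $R$; a short argument with the Frobenius norm and the fact that $A$ itself is block-diagonal makes this precise.

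Granting that reduction, we get
\[
\dist(A,\SO(G_q,\h_{f(q)})) = \min_{\substack{R_0:V\to\image df_q\\ \text{orientation-preserving isometry}}} |df_q - R_0|,
\]
because the $\R$-block of $A-R$ contributes $|1-1|=0$. But the right-hand side is precisely $\dist(df_q,\O(\g_q,\h_{f(q)}))$: the orientation-preserving isometries $V\to\image df_q\subset W$ are exactly the maps in $\O(\g_q,\h_{f(q)})$ (an element of $\O(\g,\h)$ is an isometry $V\to W$, and by dimension count its image is a $d$-plane; once we know the minimizer over isometries whose image equals $\image df_q$ already realizes the overall minimum — which follows because for any other isometry $R'$ with different image, one can rotate its image to $\image df_q$ without increasing the distance to $df_q$, using again that $df_q$ has image exactly $\image df_q$). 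So both quantities equal $\min_{R_0}|df_q - R_0|$, and the identity follows; integrating the $p$-th power over $\M$ against $\Volume$ then yields $\tE_p^S(\hn_f)=\E_p^S(f)$, and combined with $\tE_p^B(\hn_f)=\E_p^B(f)$ (which is immediate, since $K^\h\circ D\hn_f$ is by definition the connector applied to $D\hn_f$, matching the bending term in \eqref{eq:Etotal2}) this gives $\E_p(f)=\tE_p(\hn_f)$, completing Proposition~\ref{prop:tE_p}.

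**The main obstacle** is the block-reduction step: justifying rigorously that, in minimizing $|A-R|$ over all orientation-preserving isometries $R:V\oplus\R\to W$, it suffices to consider those $R$ that map $\{0\}\oplus\R$ onto $\R\hn_f(q)$ (and hence, by orientation, act as the identity there). The clean way is to work in a fixed orthonormal basis of $V\oplus\R$ adapted to the splitting and an orthonormal basis of $W$ adapted to $\image df_q\oplus\R\hn_f(q)$; then $A$ is the matrix $\operatorname{diag}(M,1)$ where $M$ represents $df_q$, and $\dist(A,\SO)^2 = \min_R |{\operatorname{diag}(M,1)} - R|^2_{\mathrm{Fro}}$ over $R\in\SO(d+1)$. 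One then shows the minimizer can be taken block-diagonal $\operatorname{diag}(R_0,1)$ with $R_0\in\SO(d)$: writing $M = U\Sigma U'^{T}$ via SVD with $U,U'\in\SO(d)$ (the sign of one singular value being absorbed appropriately so that $\det\geq 0$), the optimal $R$ aligns with the polar factor of $\operatorname{diag}(M,1)$, whose polar factor is $\operatorname{diag}(\text{polar}(M),1)$ since the $1$-block is already positive. This is the one genuinely computational point; everything else is bookkeeping with orthogonal decompositions.
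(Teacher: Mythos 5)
Your proposal is correct and follows essentially the same route as the paper: the paper simply asserts the underlying linear-algebraic identity (for $A\oplus B$ with $\image A\perp\image B$, $B$ an isometry, and $A\oplus B$ orientation-preserving) and verifies its hypotheses, while you supply the proof of that identity via adapted orthonormal bases and the polar/SVD factorization of $\operatorname{diag}(M,1)$. One point to make explicit: the matrix $M$ representing $df_q:V\to\image df_q$ has $\det M>0$ automatically, because $\hn_f$ is defined precisely so that $Df\oplus\hn_f$ is orientation-preserving and the orientation on $\image df_q$ is the one induced by $\hn_f$; this positivity is what guarantees both that the polar factor of $\operatorname{diag}(M,1)$ lies in $\SO(d+1)$ and that $\dist(M,\SO(d))=\dist(M,\O(d))$ at the final step, so no ``absorbing the sign of a singular value'' is needed --- indeed, if $\det M$ could be negative the claimed identity would fail.
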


\begin{proof}
This is a linear-algebraic statement. 
Let $(V_1,\g_1)$, $(V_2,\g_2)$ and $(W,\h)$ be oriented $d$-, $k$- and $(d+k)$-dimensional inner-product spaces. Let $A\in\Hom(V_1,W)$ and $B\in\Hom(V_2,W)$ satisfy
\begin{enumerate}[itemsep=0pt,label=(\alph*)]
\item $A\oplus B \in \Hom(V_1\oplus V_2,W)$ is orientation-preserving (with respect with the natural orientation of $V_1\oplus V_2$).
\item $\image(A) =  (\image(B))^\perp$.
\item $B\in \O(\g_2,\h)$.
\end{enumerate}
Then, 
\[
\dist(A\oplus B,\SO(\g_1\oplus \g_2,\h)) = \dist(A,\O(\g_1,\h)).
\]
In the present case $k=1$, $A = Df$ and $B = \hn_f$.
Indeed, by definition, $Df\oplus \hn_f$ is orientation-preserving, $\hn_f$ is a unit vector, and $G$ is a product metric.
\end{proof}

The fact that $\E_p(f) = \tE_p(\hn_f)$  implies that \thmref{thm:main} is proved if we prove the following relaxed version:

\begin{theorem}
\label{thm:main2}
Suppose that there exists a sequence $\xi_n \in W^{1,p}(\M;T\N)$ satisfying 
\[
\limn \tE_p(\xi_n) \to 0.
\] 
Then there exists a subsequence of $\xi_n$ converging strongly in $W^{1,p}(\M;T\N)$ to a smooth limit $\xi$. Furthermore, 
$\tE_p(\xi)=0$, implying that $f_\xi\in\Imm_p(\M;\N)$ is a smooth isometric immersion, satisfying
$K^\h\circ D\hn_{f_\xi} = - Df_\xi\circ S$, i.e., $S$ is the shape operator of the limit.
\end{theorem}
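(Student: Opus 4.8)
The plan is to reduce everything to the localizable regime $p>\dim\M$ and there run a Young-measure argument in the spirit of \cite{JK89} and \cite{KMS19}, adapted to the relaxed energy $\tE_p$, extracting strong convergence of $Df_{\xi_n}$ from the stretching term alone and then treating the bending term separately.

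\emph{A priori bounds and equi-integrability.} First I would note that $\SO(G,\h)$ is a bounded subbundle of $(T\M\oplus\R)^*\otimes T\N$, so there is $C_0$ with $|A|\le\dist(A,\SO(G,\h))+C_0$ pointwise; hence for every measurable $E\subseteq\M$,
\[
\int_E|Df_{\xi_n}\oplus\xi_n|^p\,\Volume\le C\brk{\tE_p^S(\xi_n)+|E|}.
\]
Thus $|Df_{\xi_n}|^p$ and $|\xi_n|^p$ are bounded in $L^1$ and equi-integrable; from $|K^\h\circ D\xi_n|\le|Df_{\xi_n}\circ S+K^\h\circ D\xi_n|+|S|_\infty|Df_{\xi_n}|$ together with $\tE_p^B(\xi_n)\to0$, so is $|K^\h\circ D\xi_n|^p$, and by the Sasaki identity \eqref{eq:Dxi} the same holds for $|D\xi_n|^p=|Df_{\xi_n}|^p+|K^\h\circ D\xi_n|^p$; in particular $\xi_n$ is bounded in $W^{1,p}(\M;T\N)$. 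Passing to a subsequence, $\xi_n\wc\xi$ in $W^{1,p}(\M;T\N)$ (for $p=1$ using the equi-integrability for weak $L^1$-compactness), and by Rellich $\xi_n\to\xi$ in $L^p$.

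\emph{The case $p>\dim\M$.} Here $\xi_n\to\xi$ and $f_{\xi_n}\to f_\xi$ uniformly, so I work in the covers $\calU,\calV$ and trivializations of the ``Trivializations'' paragraph, and in a coordinate patch as in \secref{sec:coordinates}; since $f_{\xi_n}$ is bounded in $W^{1,p}$ and converges uniformly, its coordinate gradient generates (after a further subsequence) a gradient Young measure $\nu_x$ on $\R^{(d+1)\times d}$ with barycenter $df_\xi(x)$. The lower bound in the Young-measure representation of $\liminf\tE_p^S(\xi_n)$, combined with $\tE_p^S(\xi_n)\to0$ and the uniform convergence $\xi_n\to\xi$ (which lets me freeze the normal field in the integrand), forces for a.e.\ $x$
\[
\supp\nu_x\subseteq\widehat K_x:=\BRK{M:(M\oplus\xi(x))\in\SO(G_x,\h_{f_\xi(x)})};
\]
in particular $\widehat K_x\ne\emptyset$ a.e., which already gives $|\xi|_\h=1$ a.e. A short linear-algebra computation (using that $\xi(x)$ is now a fixed unit vector and $G$ is a product metric) shows that $\widehat K_x$ is, in suitable bases, $A_x\cdot\SO(d)\cdot B_x$ for a fixed isometric embedding $A_x\in\R^{(d+1)\times d}$ and a fixed invertible $B_x\in\R^{d\times d}$; in particular $\widehat K_x$ has no rank-one connections. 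The key step is the rigidity: for a.e.\ $x$, $\nu_x$ is a homogeneous gradient Young measure supported in $A_x\,\SO(d)\,B_x$, and absorbing $B_x$ by an affine change of the domain variable and $A_x^{T}$ on the left turns the generating gradients into gradients of $\R^d$-valued maps whose gradient Young measure is supported in $\SO(d)$, hence Dirac by the classical rigidity of $\SO(d)$-gradient Young measures (the argument of \cite{Res67}, or the null-Lagrangian/quasiconvexity argument used in \cite{KMS19}); undoing the reductions, $\nu_x=\delta_{df_\xi(x)}$. Therefore $Df_{\xi_n}\to Df_\xi$ in measure, which with the equi-integrability of the first step upgrades to $Df_{\xi_n}\to Df_\xi$ in $L^p$, and by \propref{prop:trivialization}, $f_{\xi_n}\to f_\xi$ in $W^{1,p}(\M;\N)$. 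For the bending term, $Df_{\xi_n}\circ S\to Df_\xi\circ S$ in $L^p$, so $\tE_p^B(\xi_n)\to0$ forces $d\jmath\circ K^\h\circ D\xi_n\to-d\jmath\circ Df_\xi\circ S$ in $L^p$ (using the uniformly convergent footpoints to make sense of the fibrewise isometry $d\jmath$), whence $K^\h\circ D\xi=-Df_\xi\circ S$ by \lemref{lem:connector_convergence}. Combining $f_{\xi_n}\to f_\xi$ in $W^{1,p}$, $\xi_n\to\xi$ uniformly, and this last convergence, \propref{prop:D1} gives $\xi_n\to\xi$ strongly in $W^{1,p}(\M;T\N)$; continuity of $\tE_p$ along strongly convergent sequences then yields $\tE_p(\xi)=0$.

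\emph{The case $p\le\dim\M$, and smoothness.} For $p\le\dim\M$ I would Lipschitz-truncate $\iota\circ\xi_n$ at a level $\lambda_n$ and compose with a tubular-neighbourhood retraction onto $T\N$ (as in \cite{KM21}) to obtain $\tilde\xi_n\in W^{1,\infty}(\M;T\N)$ agreeing with $\xi_n$ off a set $E_n$ with $|E_n|\to0$. Choosing $\lambda_n\to\infty$ slowly enough --- possible because $\tE_p(\xi_n)\to0$ and $|D\xi_n|^p$ is equi-integrable --- one gets, for a fixed $p'\in(\dim\M,\infty)$, both $\tE_{p'}(\tilde\xi_n)\to0$ (estimating $\dist^{p'}\lesssim(1+\lambda_n)^{p'-p}\dist^{p}$ off $E_n$ and $\dist^{p'}\lesssim(1+\lambda_n)^{p'}$ on $E_n$, and likewise for the bending integrand) and $\|\xi_n-\tilde\xi_n\|_{W^{1,p}}\to0$. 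Applying the previous case to $\tilde\xi_n$ at exponent $p'$ gives a subsequence with $\tilde\xi_n\to\xi$ in $W^{1,p'}(\M;T\N)$ and $\tE_{p'}(\xi)=0$; hence $\xi_n\to\xi$ in $W^{1,p}(\M;T\N)$, and since $\tE_{p'}(\xi)=0$ makes the integrands of $\tE_p$ vanish a.e., $\tE_p(\xi)=0$ as well. Finally, by \propref{prop:tE_p}, $\tE_p(\xi)=0$ gives $f_\xi\in\Imm_p(\M;\N)$, $\xi=\hn_{f_\xi}$, and $Df_\xi\in\O(\g,\h)$ a.e.; since $\O(\g_q,\h)$ is bounded uniformly in $q\in\M$, $f_\xi$ is Lipschitz, and since $|D\hn_{f_\xi}|^2=|Df_\xi|^2+|K^\h\circ D\hn_{f_\xi}|^2$ with $K^\h\circ D\hn_{f_\xi}=-Df_\xi\circ S$ bounded, $\xi=\hn_{f_\xi}\in W^{1,\infty}(\M;T\N)$, i.e.\ $f_\xi\in\Imm_\infty(\M;\N)$. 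Its first fundamental form is $\g$ and its shape operator is $S$, both smooth, so $f_\xi$ is smooth by \thmref{thm:reg}, and then so is $\xi=\hn_{f_\xi}$.

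\emph{Main obstacle.} The crux is the rigidity step: identifying the zero set $\widehat K_x$ of the relaxed stretching energy --- a copy of $\SO(d)$ sitting inside the $(d+1)\times d$ matrices, ``frozen'' by the limiting unit normal field $\xi$ and by the background metrics $\g$ and $\h$ --- and reducing the statement ``a gradient Young measure supported in $\widehat K_x$ is Dirac'' to classical $\SO(d)$-rigidity while respecting the curl-free constraint. A second, more bookkeeping difficulty, handled by \lemref{lem:f_convergence}, \lemref{lem:connector_convergence}, \propref{prop:D1} and \propref{prop:trivialization}, is that the relevant sequences live in fibres over the moving footpoints $f_{\xi_n}$, so one must pass through trivializations to compare them; and one must be careful to obtain strong convergence of $Df_{\xi_n}$ from the stretching term before the bending term can be exploited.
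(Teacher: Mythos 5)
Your treatment of the regime $p>\dim\M$ is essentially correct and rests on the same mechanism as the paper's: weak continuity of minors plus the strict convexity of $\SO$ inside the sphere of radius $\sqrt{d+1}$. The packaging differs: you take the gradient Young measure of $f_{\xi_n}$ alone, freeze the (uniformly convergent) normal $\xi$ to identify the pointwise well $\widehat K_x\cong A_x\SO(d)B_x$, localize, and invoke classical $\SO(d)$-rigidity of homogeneous gradient Young measures; the paper instead applies the Young-measure machinery directly to the augmented object $\alpha_n=R\circ(Df_{\xi_n}\oplus\xi_n)$, which is square, so $\cofop$ and $\det$ act on it directly and the extra column causes no harm because it converges uniformly. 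Your route needs a few extra justifications (the localization principle, and that left-multiplication by $A_x^T$ and the affine change of the domain variable are legitimate fibrewise since $A_x,B_x$ vary with $x$), but these can be supplied; and your passage from $\nu_x=\delta_{df_\xi(x)}$ to strong $L^p$-convergence via convergence in measure plus equi-integrability (Vitali) is a clean alternative to the paper's truncated test function $W(A)=|A-\alpha|^p\vp(|A|/3\sqrt{d+1})$. The subsequent handling of the bending term via \lemref{lem:connector_convergence} and \propref{prop:D1} matches the paper.

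The genuine gap is in the truncation step for $p\le\dim\M$. You truncate at levels $\lambda_n\to\infty$ and claim that $\tE_{p'}(\txi_n)\to0$ for a fixed $p'>\dim\M$ is achievable ``because $\tE_p(\xi_n)\to0$ and $|D\xi_n|^p$ is equi-integrable''. Equi-integrability only gives $\sup_n\int_{\{|D\xi_n|>\lambda\}}|D\xi_n|^p\le\e(\lambda)$ with $\e(\lambda)\to0$ at an uncontrolled rate, so the bad-set contribution $\lambda_n^{p'}|E_n|\lesssim\lambda_n^{p'-p}\e(c\lambda_n)$ need not tend to zero for any choice of $\lambda_n\to\infty$ (take $\e(\lambda)\sim1/\log\lambda$). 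What makes the truncation work in the paper is a pointwise estimate (Lemma~\ref{lem:auxcalc}): wherever $|D\xi_n|$ exceeds the fixed threshold $(3+2M)\sqrt{d+1}$, it is dominated by the energy density itself, so $\int_{\{|D\xi_n|>(3+2M)\sqrt{d+1}\}}|D\xi_n|^p\lesssim\tE_p(\xi_n)\to0$. This permits a truncation at a \emph{fixed} level, producing uniformly Lipschitz $\txi_n$ with $\tE_q(\txi_n)\to0$ for every $q$, with no diagonal argument at all; your scheme either needs this estimate to close, or does not close. A second, related omission: $T\N$ is non-compact, so before projecting the Lipschitz approximants back onto $\iota(T\N)$ you must first cut $\xi_n$ off into a compact disc bundle $T^\rho\N$ (the paper's $\hat\xi_n=\vp(|\xi_n|)\xi_n$, harmless because $\{|\xi_n|>2\sqrt{d+1}\}$ is contained in a set where the stretching density is bounded below); otherwise the tubular neighbourhood and the uniform smoothness of the nearest-point projection are not available.
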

 
\subsection{Basic compactness considerations}

\begin{lemma}
Let $p>1$, and assume $\tE_p(\xi_n) \to 0$.
Then $\xi_n$ has a weakly converging subsequence $\xi_n \wc \xi$ in $W^{1,p}(\M;T\N)$.
In particular, if $p>d$ then $\xi_n\to \xi$ uniformly.
The same (weak/uniform convergence) holds for $f_{\xi_n}\to f_{\xi}$.
\end{lemma}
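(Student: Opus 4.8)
The claim is a basic compactness statement: from $\tE_p(\xi_n)\to 0$ with $p>1$, extract a subsequence with $\xi_n\wc\xi$ in $W^{1,p}(\M;T\N)$, with uniform convergence when $p>d$, and the corresponding convergence for $f_{\xi_n}$. The plan is to produce a uniform $W^{1,p}$-bound on $\xi_n$ and then invoke the weak closedness of $W^{1,p}(\M;T\N)\hookrightarrow W^{1,p}(\M;\R^D)$ recalled in \secref{sec:imm_p}, together with Lemma~\ref{lem:f_convergence} for the statement about $f_{\xi_n}$.

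First I would bound $\tE_p^S(\xi_n)\to 0$: since $\SO(G,\h)$ is a fixed compact set of linear maps (fiberwise) and $\M$ is compact, $\dist^p(Df_{\xi_n}\oplus\xi_n,\SO(G,\h))\to 0$ in $L^p$ gives, via the triangle inequality $|Df_{\xi_n}\oplus\xi_n|\le \dist(Df_{\xi_n}\oplus\xi_n,\SO(G,\h)) + \sup_{T\in\SO(G,\h)}|T|$ and the fact that elements of $\SO(G,\h)$ have bounded norm (they are isometries between the relevant inner-product spaces), a uniform bound $\|Df_{\xi_n}\oplus\xi_n\|_{L^p(\M)}\le C$. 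By definition of $Df_{\xi_n}\oplus\xi_n$ this controls both $\|Df_{\xi_n}\|_{L^p}$ and $\|\xi_n\|_{\frakx}$ in $L^p$ (taking $(v,0)$ and $(0,1)$ respectively). Now $|D\xi_n|^2 = |Df_{\xi_n}|^2 + |K^\h\circ D\xi_n|^2$ by \eqref{eq:Dxi}, and $\|K^\h\circ D\xi_n\|_{L^p} \le \|Df_{\xi_n}\circ S + K^\h\circ D\xi_n\|_{L^p} + \|Df_{\xi_n}\circ S\|_{L^p} \le \tE_p^B(\xi_n)^{1/p} + \|S\|_\infty\|Df_{\xi_n}\|_{L^p}$, which is uniformly bounded since $\tE_p^B(\xi_n)\to 0$ and $S$ is a fixed smooth tensor on the compact $\M$. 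Combined with the $L^p$-bound on $\xi_n$ itself (the zeroth-order term, using that the zero section is a bounded set and $\xi_n$ takes values in the sphere bundle-like region... more carefully: $|\xi_n|_\frakx$ is $L^p$-bounded as shown, and $f_{\xi_n}$ has image in the compact $\N$, so $\iota\circ\xi_n$ is $L^p$-bounded), we get $\|\iota\circ\xi_n\|_{W^{1,p}(\M;\R^D)}\le C$ uniformly.

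With this uniform bound, weak compactness of $W^{1,p}(\M;T\N)$ for $p\in(1,\infty)$ — recalled in \secref{sec:imm_p} as weak closedness of $\iota\circ W^{1,p}(\M;T\N)$ in $W^{1,p}(\M;\R^D)$, together with reflexivity of $W^{1,p}(\M;\R^D)$ — yields a subsequence and $\xi\in W^{1,p}(\M;T\N)$ with $\xi_n\wc\xi$. When $p>d=\dim\M$, the compact Sobolev embedding $W^{1,p}(\M;\R^D)\hookrightarrow\hookrightarrow C^0(\M;\R^D)$ upgrades this (along a further subsequence, hence along the same one by uniqueness of the limit) to $\iota\circ\xi_n\to\iota\circ\xi$ uniformly, i.e.\ $\xi_n\to\xi$ uniformly. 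Finally, Lemma~\ref{lem:f_convergence} transfers weak (resp.\ strong/uniform, again via the compact embedding applied to $\jmath\circ f_{\xi_n}$) $W^{1,p}$-convergence of $\xi_n$ to the corresponding convergence of $f_{\xi_n}\to f_\xi = \pi\circ\xi$.

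\textbf{Main obstacle.} There is no deep difficulty here; the one point requiring care is the bookkeeping that keeps all estimates fiberwise-uniform — in particular, that $\sup_{T\in\SO(G,\h)}|T|$ and $\|S\|_\infty$ are genuinely finite, which uses compactness of $\M$ and $\N$, and that the zeroth-order $L^p$-bound on $\iota\circ\xi_n$ follows from the $L^p$-bound on $|\xi_n|_\frakx$ plus boundedness of $f_{\xi_n}(\M)\subset\N$. The case $p=1$ is (rightly) excluded, since then one only gets an equi-integrable bound and weak-$L^1$ compactness of derivatives, not $W^{1,1}$-weak compactness; the lemma is stated for $p>1$ precisely to avoid this.
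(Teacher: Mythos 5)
Your proof is correct and follows essentially the same route as the paper's: bound $Df_{\xi_n}\oplus\xi_n$ in $L^p$ via the triangle inequality and the fact that elements of $\SO(G,\h)$ have norm $\sqrt{d+1}$, then bound $K^\h\circ D\xi_n$ using the bending energy and $\|S\|_\infty$, combine via the Sasaki metric identity \eqref{eq:Dxi} to get a uniform $W^{1,p}$-bound, and conclude by weak compactness, Morrey embedding for $p>d$, and Lemma~\ref{lem:f_convergence}. No gaps.
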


\begin{proof}
Let $\tE_p(\xi_n) \to 0$.
The uniform boundedness of the stretching energy $\tE_p^S(\xi_n)$ and the compactness of $\N$ imply that $\xi_n$ is a bounded sequence in $L^p(\M,T\N)$ and that $Df_{\xi_n}$ is a bounded sequence in $L^p(\M;T^*\M\otimes T\N)$:
Indeed, 
\[
|Df_\xi\oplus \xi| \ge \max\{|Df_\xi|, |\xi|\},
\]
and  
\[
\dist(Df_\xi\oplus \xi,\SO(G,\h)) \ge |Df_\xi\oplus \xi| - \sqrt{d+1},
\]
since every element of $\SO(G,\h)$ is of norm $\sqrt{d+1}$.
The boundedness of the bending energy $\tE_p^B(\xi_n)$ implies in turn that $K^\h\circ D\xi_n$ is a bounded sequence in $L^p(\M;T^*\M\otimes T\N)$.
By the definition \eqref{eq:sasaki} of the Sasaki metric, 
this implies that $D\xi_n$ is a bounded sequence in $L^p(\M;T^*\M\otimes TT\N)$, i.e., 
$\xi_n$ is bounded in $W^{1,p}(\M;T\N)$.
The claim on $\xi_n$ follows immediately, and the claim on $f_{\xi_n}$ follows from Lemma~\ref{lem:f_convergence}.
\end{proof}

\subsection{The case $p>d$}

We first prove \thmref{thm:main2} for $p>d$; this restriction is relaxed further below.
The analysis for $p>d$ can also be applied directly to the functional $\E_p:\Imm_p(\M;\N)\to [0,\infty)$; the extension to $p\le d$ in the next part, however, cannot.

\paragraph{Analysis of the limiting map}
Let $\calU$, $\calV$ and $R$ define trivializations of $T\N$  as in \secref{sec:imm_p} and let $U\in\calU$; note that $TU = T\M|_U$ is a vector bundle over $U$.
Then,
\[
\alpha_n = R\circ (Df_{\xi_n}\oplus \xi_n) \in L^p\Gamma((TU\oplus\R)^*\otimes\R^{d+1}),
\]
which is a section of a vector bundle over $U$,
is a trivialization of $Df_{\xi_n}\oplus \xi_n$ satisfying
\[
\dist(Df_{\xi_n}\oplus \xi_n,\SO(G,\h)) = \dist(\alpha_n,\SO(G,\euc)).
\]
The fact that $\tE_p(\xi_n) \to 0$ implies in particular that
\beq
\limn \int_U \dist^p(\alpha_n,\SO(G,\euc))\,\Volume = 0.
\label{eq:distG1}
\eeq

It follows from \eqref{eq:distG1} that $\alpha_n$ is an $L^p$-bounded sequence of sections of $(TU\oplus\R)^*\otimes\R^{d+1}$.
By the fundamental theorem of Young measures (see \cite[Section~3.1]{KMS19} for a version for vector bundles), there exists an $L^p$-Young measure $\nu\in Y^p(U,(TU\oplus\R)^*\otimes\R^{d+1})$ such that 
\[
\alpha_n \xrightarrow{Y} \nu.
\]
That is, for every vector bundle $E\to U$ and every (generally nonlinear) bundle map  $W:(TU\oplus\R)^*\otimes\R^{d+1}\to E$ for which $W(\alpha_n):U\to E$ is $L^1$-weakly precompact,
\[
W(\alpha_n) \wc 
\BRK{q \to \int_{(TU\oplus\R)^*\otimes\R^{d+1}} W(A) \, d\nu_q(A)} 
\qquad \text{ in $L^1\Gamma(E)$}.
\]

Take $W(A)=\dist(A,SO(G,\euc))$. Since $W(\alpha_n)$ is bounded in $L^p(U)$ for $p>1$, it is $L^1$-weakly precompact, from which follows that
\[
\begin{split}
0 &= \limn \int_U \dist(\alpha_n,\SO(G,\euc))\,\Volume \\\
&=  \int_U \int_{(TU\oplus\R)^*\otimes\R^{d+1}} \dist(A,\SO(G,\euc))\, d\nu(A)\, \Volume,
\end{split}
\]  
i.e., $\nu_q$ is supported on $\SO(G,\euc)$ for a.e. $q\in U$.

For general oriented inner-product spaces $(V,\g)$ and $(W,\h)$ of equal dimension $s$, and a linear map $A\in\Hom(V,W)$, the determinant and the cofactor of $A$, $\det A \in \Hom(\R;\R) \simeq \R$ and $\cofop A \in \Hom(V,W)$ can be defined in a basis-independent manner by
\[
\det A = \star_\h \circ (\wedge^s A)\circ \star_\g
\Textand
\cofop A = (-1)^{s+1} \star_\h \circ (\wedge^{s-1} A)\circ \star_\g,
\]
where $\star_\g:\Lambda^k V\to \Lambda^{s-k}V$ and $\star_\h:\Lambda^k W\to \Lambda^{s-k}W$ are the hodge-dual isomorphisms of the respective spaces, and $\wedge^k A$ is the $k$-minor of $A$, defined by
\[
\wedge^k A (v_1,\dots,v_k) = A(v_1) \wedge \dots\wedge A(v_k) .
\]  
These definitions can be equivalently obtained by choosing positive orthonormal bases for $V$ and $W$ and evaluating the determinant and cofactor of their matrix representations in these bases.
It is well-known that
\[
A\in \SO(\g,\h)
\qquad\text{if and only if}\qquad
\cofop A = A
\textand
\det A = 1.
\]

With that, consider next the test functions $W(A) = A$, $W(A) = \cofop A$ and $W(A)=\det A$. 
By definition, they are compositions of minors of $A$ of ranks 1, $d$ and $(d+1)$, respectively, and hodge-dual isometries.
Since $\h$ and $\h^{-1}$ are bounded uniformly (in coordinates) and $Df_{\xi_n}$ is bounded in $L^p$, both the cofactor and the determinant of $\alpha_n$ are uniformly bounded in $L^{p/d}$, and therefore $W(\alpha_n)$ are $L^1$-weakly precompact for all above choices of $W$. It follows from the definition of the Young measure limit that
\begin{align*}
\alpha_n &\wc \BRK{q \to \int_{(TU\oplus\R)^*\otimes\R^{d+1}} A \, d\nu_q(A)}  &\text{ in $L^1\Gamma((TU\oplus\R)^*\otimes\R^{d+1})$}  \\
\cofop(\alpha_n) &\wc  \BRK{q \to \int_{(TU\oplus\R)^*\otimes\R^{d+1}} \cofop A \, d\nu_q(A)}  & \text{ in $L^1\Gamma((TU\oplus\R)^*\otimes\R^{d+1})$}  \\
\det(\alpha_n) &\wc  \BRK{q \to \int_{(TU\oplus\R)^*\otimes\R^{d+1}} \det A \, d\nu_q(A)} &  \text{ in $L^1(U)$} . \\
\end{align*}

Since $\nu_q$ is supported on $\SO(G,\euc)$ for a.e. $q\in U$, and on that set $\cofop A=A$ and $\det A=1$, it follows that 
\begin{align}
\alpha_n &\wc \BRK{q \to \int_{\SO(G,\euc)} A \, d\nu_q(A)} & \text{ in $L^1\Gamma((TU\oplus\R)^*\otimes\R^{d+1})$} \nonumber \\
\cofop(\alpha_n) &\wc  \BRK{q \to \int_{\SO(G,\euc)}  A \, d\nu_q(A)} & \text{ in $L^1\Gamma((TU\oplus\R)^*\otimes\R^{d+1})$}  \label{eq:YM1}
\\
\det(\alpha_n)& \wc  1 & \text{ in $L^1(U)$} . \nonumber
\end{align}

Recall that $\xi_n \wc \xi$  (hence also $f_{\xi_n} \wc f_\xi$) in $W^{1,p}$. It follows from the second clause of \propref{prop:trivialization} that
\beq
\alpha_n \wc \alpha = R\circ(Df_\xi\oplus\xi)
\qquad\text{in $L^p\Gamma((TU\oplus\R)^*\otimes\R^{d+1})$}.
\label{eq:alpha_n_alpha}
\eeq

\begin{lemma}
The weak $L^p$-convergence $\alpha_n\wc\alpha$ implies that
\beq
\begin{aligned}
\cofop(\alpha_n) &\wc  \cofop(\alpha) \qquad \text{ in $L^{p/d}\Gamma((TU\oplus\R)^*\otimes\R^{d+1})$}  \\
\det(\alpha_n) &\wc \det(\alpha)  \qquad \text{ in $L^{p/d}(U)$} .
\end{aligned}
\label{eq:lem3.1}
\eeq
\end{lemma}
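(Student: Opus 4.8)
The plan is to exploit the fact that the cofactor and determinant are, respectively, built out of $d$-minors and $(d+1)$-minors of $\alpha_n$, composed with fixed (smooth, bounded) Hodge-star isometries. So it suffices to prove weak continuity of minors: if $\beta_n \wc \beta$ in $L^p$ with $p \ge d$, then the $k$-minors $\wedge^k \beta_n$ converge weakly to $\wedge^k \beta$ in $L^{p/k}$ for $k = d$ and $k = d+1$. This is the standard weak-continuity-of-minors (weak continuity of Jacobians) statement, adapted to our setting; the only subtlety is that $\alpha_n$ are sections of a vector bundle over $U$ covering the varying basepoints $f_{\xi_n}$, rather than genuine differentials of maps into $\R^{d+1}$.

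To handle this, first I would recall that $\alpha_n = R\circ(Df_{\xi_n}\oplus \xi_n)$ can be written in the local coordinates on $U$ (which exists since $U$ is covered by a single chart up to shrinking) as an honest $\R^{(d+1)\times(d+1)}$-matrix-valued $L^p$ function. Indeed, the vector-part of $\alpha_n$ is $(\partial_i f_{\xi_n}^\alpha, \xi_n^\alpha)$ in the frame $R$, which is bounded in $L^p$ and converges weakly to the corresponding quantity for $\alpha$ by \eqref{eq:alpha_n_alpha}. But the first $d$ columns of this matrix, namely $R_{f_{\xi_n}}\circ df_{\xi_n}$, are exactly the partial derivatives of the composite $\R^D$-valued map $R\circ f_{\xi_n}$ in that chart up to correction terms involving the basepoint — more precisely, $d(\text{chart}\circ f_{\xi_n})$ — so its $k$-minors for $k\le d$ are genuine Jacobian minors of a $W^{1,p}$ map $U\to\R^d$ (or $\R^{d+1}$), and weak continuity of minors (see, e.g., the div-curl / null Lagrangian argument of \cite{Res67} or the bundle-valued version in \cite[Section~3]{KMS19}) gives $\wedge^d(R\circ df_{\xi_n})\wc \wedge^d(R\circ df_\xi)$ in $L^{p/d}$. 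The last column $\xi_n$ converges strongly in $L^p$ (since $\xi_n\wc\xi$ in $W^{1,p}$ with $p>1$ gives $\xi_n\to\xi$ in $L^p$ by Rellich), so expanding $\cofop(\alpha_n)$ and $\det(\alpha_n)$ along that last column — a product of a strongly $L^p$-convergent factor with a weakly $L^{p/d}$-convergent $d$-minor, which has a joint limit because one factor converges strongly — yields \eqref{eq:lem3.1}.

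More carefully, the cleanest route: write $\det(\alpha_n) = \sum_\alpha (\xi_n^\alpha)\,(\text{signed }d\text{-minor of the }df_{\xi_n}\text{-block omitting row }\alpha)$, and similarly each entry of $\cofop(\alpha_n)$ is either a $d$-minor purely of the $df_{\xi_n}$-block (these converge weakly in $L^{p/d}$ by minor weak-continuity) or a $(d-1)$-minor of that block times one entry of $\xi_n$ (the $(d-1)$-minor converges weakly in $L^{p/(d-1)}$, and $\xi_n^\alpha\to\xi^\alpha$ strongly in $L^p$, so the product converges weakly in $L^{p/d}$ by the elementary lemma that strong-times-weak is weak when the exponents are conjugate-compatible). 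Summing finitely many such terms gives the claim. The main obstacle I anticipate is purely bookkeeping: justifying the weak continuity of the $d$- and $(d-1)$-minors of $R_{f_{\xi_n}}\circ df_{\xi_n}$ in the presence of the basepoint-dependent frame $R$ and the chart identifications, i.e.\ making precise that these minors really are (up to lower-order, uniformly-convergent corrections) Jacobian minors of genuine $W^{1,p}$ maps into Euclidean space so that the classical theorem applies — this is essentially the content of \propref{prop:trivialization} together with the div-curl lemma, so I expect it to go through, but it requires care with the uniform convergence $f_{\xi_n}\to f_\xi$ (available since $p>d$) to control the frame terms.
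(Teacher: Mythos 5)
Your proposal is correct and follows essentially the same route as the paper: expand $\cofop(\alpha_n)$ and $\det(\alpha_n)$ along the $\xi_n$-column, reduce the $Df_{\xi_n}$-block to genuine Jacobian minors by replacing the orthonormal frame $R$ with a coordinate frame $dh$ (so that $R\circ Df_{\xi_n}$ becomes $d(h\circ f_{\xi_n})$ up to a smooth, uniformly convergent frame factor), and invoke the classical weak continuity of minors. The only real difference is that the paper exploits the \emph{uniform} convergence $\xi_n\to\xi$ (available since $p>d$) rather than strong $L^p$ convergence to handle the $\xi_n$-factors; with your strong-times-weak product lemma the determinant term a priori lands only in $L^{p/(d+1)}$, and you need the uniform $L^\infty$ bound on $\xi_n$ to upgrade this to the stated $L^{p/d}$-weak convergence.
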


\begin{proof}
Since the hodge-dual operator is a linear isometry and $\xi_n\to\xi$ uniformly, it suffices to show that 
\[
\wedge^k \alpha_n \wc  \wedge^k \alpha \qquad \text{ in $L^{p/d}\Gamma(\Lambda^k(TU\oplus\R)^*\otimes \Lambda^k \R^{d+1})$}  
\]
for $k=d,d+1$. 
We show more generally that for every $k=1,\dots,d+1$
\[
\wedge^k \alpha_n \wc \wedge^k \alpha 
\qquad
\text{in $L^{p/\min(k,d)}\Gamma(\Lambda^k(TU\oplus\R)^*\otimes\Lambda^k \R^{d+1})$}.
\]
We start by noting that for $v_1,\ldots,v_k \in T_qU$ and $t_1,\dots,t_k\in\R$,
\[
\begin{split}
\wedge^k\alpha_n((v_1,t_1),\dots,(v_k,t_k)) &= \wedge_{j=1}^k (R_{f_{\xi_n}(q)}\circ D_qf_{\xi_n}(v_j) + t_j R_{f_{\xi_n}(q)}\circ\xi_n(q)) \\
&= \wedge_{j=1}^k R_{f_{\xi_n}(q)}\circ D_qf_{\xi_n}(v_j) \\
&\qquad + \sum_{\ell=1}^k (-1)^\ell t_\ell \wedge_{j\ne \ell} \brk{R_{f_{\xi_n}(q)}\circ D_qf_{\xi_n}(v_j)}\wedge (R_{f_{\xi_n}(q)}\circ \xi_n(q)),
\end{split}
\]
i.e.,
\[
\wedge^k\alpha_n = \wedge^k (R\circ Df_{\xi_n}\oplus 0) + 
\wedge^{k-1} (R\circ Df_{\xi_n}\oplus 0) \wedge (0\oplus R\circ \xi_n).
\]
Since $\xi_n\to\xi$ uniformly, and since the product of a weakly $L^p$-convergent sequence and a uniformly convergence sequence is weakly $L^p$-convergent, it suffices to show that
\[
\wedge^k (R\circ Df_n) \wc \wedge^k (R\circ Df)
\qquad  
\text{in $L^{p/\min(k,d)}\Omega^k(U;\Lambda^k \R^{d+1})$}.
\]
Since $\M$ and $\N$ are compact, the orthonormal frame $R$ can be replaced by any other smooth frame, for example, $R = dh$, where $h:V\to\R^{d+1}$ is a local coordinate system. The claim reduces then to the standard weak-continuity of minors, proved inductively on  $k$ (see, e.g., \cite[Lemma~5.10]{Rin18}).
\end{proof}

Since $\cofop(AB) = \cofop(A)\,\cofop(B)$ and $\det(AB) = \det(A)\,\det(B)$, and since $R$ is an orientation-preserving isometry, 
\begin{align*}
\cofop(R\circ A) &= R\circ \cofop(A)\\
\det(R\circ A) &= \det(A),
\end{align*}
we obtain by combining \eqref{eq:lem3.1} with \eqref{eq:YM1} that,
\[
\cofop(Df_\xi\oplus \xi) = Df_\xi\oplus \xi
\Textand
\det(Df_\xi\oplus \xi) = 1 \qquad\text{a.e.,}
\]
which implies that
\[
Df_\xi\oplus \xi \in \SO(G,\h) \qquad\text{a.e.}
\]
Thus $\tE_p^S(\xi) = 0$. By \propref{prop:tE_p}, $f_\xi \in \Imm_p(\M;\N)$ with $Df_\xi\in \O(\g,\h)$, and $\xi = \hn_{f_\xi}$. 

Thus, we have obtained  at this stage  that $\xi_n \wc \xi$ in $W^{1,p}$ where, $\xi = \hn_{f_\xi}$.
In order to complete the proof for the case $p>d$, we need to show that the convergence is in fact strong, that the shape operator of $\hn_{f_\xi}$ is $S$, and deduce from Theorem~\ref{thm:reg} that the limit is smooth.

\paragraph{Strong convergence and second fundamental form of the limit}
From the uniqueness of the limit, combining \eqref{eq:YM1} and \eqref{eq:alpha_n_alpha},
\[
\int_{\SO(G_x,\euc)} A \, d\nu_x(A) = \alpha_x
\qquad
\text{for a.e. $x\in U$}.
\] 
The left-hand side is a convex combination of elements in $\SO(G_x,\euc)$ and the right-hand side is in $\SO(G_x,\euc)$  ($\alpha$ is the composition of an element of $\SO(G,\h)$ and an element of $\SO(\h,\euc)$).
Since $\SO(G_x,\euc)$ is strictly convex, this convex combination must be trivial, namely,  
\[
\nu_x = \delta_{\alpha_x}
\qquad
\text{for a.e. $x\in U$}.
\]
As a consequence, for every $W:(TU\otimes\R)^*\otimes\R^{d+1}\to\R$ for which 
$W(\alpha_n)$ is $L^1$-weakly precompact,
\beq
\limn \int_U W(\alpha_n)\,\Volume = \int_U W(\alpha)\,\Volume.
\label{eq:YM3}
\eeq

We now show that $f_{\xi_n}\to f_\xi$ strongly in $W^{1,p}(\M;\N)$ (for the moment, we have only established weak convergence, hence in particular strong $L^p$ convergence).
By the first clause of \propref{prop:trivialization}, it suffices to prove that $\alpha_n\to\alpha$ in $L^p\Gamma((TU\otimes\R)^*\otimes\R^{d+1})$. We would be done if we could use \eqref{eq:YM3} for
\[
W(A) = |A - \alpha|^p,
\]
however, $W(\alpha_n)$ is only bounded in $L^1(U)$, which does not guarantee sequential weak precompactness. 
Instead, let 
\[ 
W(A)=|A- \alpha|^p \,\vp\brk{\frac{|A|}{3\sqrt{d+1}}},
\]
where $\vp:[0,\infty)\to\R$ is continuous, non-negative, compactly-supported, and satisfies $\vp(t)=1$ for $t\le1$ and $\vp(t)<1$ for $t>1$.
Clearly, $W(\alpha_n)$ is uniformly bounded, hence $L^1$-weakly precompact, which since $W(\alpha)=0$ implies that
\[
\limn \int_U W(\alpha_n) \,\Volume =  0.
\]
On the set
\[
B_n = \{x\in U ~:~ |\alpha_n|<3\sqrt{d+1}\},
\]
we have $W(\alpha_n) = |\alpha_n - \alpha|^p$, whereas on its complement $U\setminus B_n$,
\[
|\alpha_n - \alpha|  \le  2(|\alpha_n| - |\alpha|) \le 2\,\dist(\alpha_n,\SO(G,\euc)),
\]
where the first inequalities follow from the fact that $|\alpha| = \sqrt{d+1}$ and $|\alpha_n|\ge 3\sqrt{d+1}$, and the second inequality follows from the reverse triangle inequality.
Thus, in $U\setminus B_n$,
\[
|\alpha_n - \alpha|^p \le 2^p\,\dist^p(\alpha_n,\SO(G,\euc)).
\]
Combining the two sets,
\[ 
\begin{split}
\int_U |\alpha_n- \alpha|^p\,\Volume &= \int_{B_n} |\alpha_n- \alpha|^p\,\Volume + \int_{U\setminus B_n} |\alpha_n- \alpha|^p\,\Volume \\
&\le   \int_U W(\alpha_n)\,\Volume + 2^p \int_{U} \dist^p(\alpha_n,\SO(G,\euc))\,\Volume,
\end{split}
\]
and both term tend to zero as $n\to\infty$. We conclude that $Df_{\xi_n} \to Df_{\xi}$ in $L^p$ and therefore obtain that $f_{\xi_n} \to f_{\xi}$ in $W^{1,p}(\M;\N)$.

It remains to show that $S$ is the shape operator of $f_\xi$, i.e., that $K^h\circ D\xi=- Df_{\xi} \circ S$, and that $\xi_n\to\xi$ strongly in $W^{1,p}(\M;T\N)$.
We observe that
\[
\begin{split}
d\jmath\circ K^\h\circ D\xi_n  &=
d\jmath\circ(K^\h\circ D\xi_n + Df_{\xi_n}\circ S) \\
&\quad + (d\jmath\circ Df_\xi - d\jmath\circ Df_{\xi_n})\circ S \\
&\quad - d\jmath\circ Df_\xi \circ S 
\end{split}
\]
(the composition with $d\jmath$ is necessary in order to be able to add and subtract the various terms).
The first term on the right-hand side tends to zero in $L^p$ since the bending energy $\tE_p^B(\xi_n)$ tends to zero; the second term tends to zero in $L^p$ since $f_{\xi_n}\to f_\xi$ strongly in $W^{1,p}$.
Thus, the left-hand side converges strongly to $- d\jmath\circ Df \circ S$.

It follows from Lemma~\ref{lem:connector_convergence} that $K^\h\circ D\xi_n \to K^\h\circ D\xi$ in $L^p$ and that $K^h\circ D\xi=- Df \circ S$, thus proving that $S$ is the shape operator of $f_\xi$.
Since $\xi_n\wc \xi$ in $W^{1,p}$, $f_{\xi_n} \to f_{\xi}$ in $W^{1,p}$ and $K^\h\circ D\xi_n \to K^\h\circ D\xi$ in $L^p$, we obtain from \propref{prop:D1} that $\xi_n\to \xi$ strongly, as needed.
 
Lastly, $f_\xi$ satisfies the assumptions of Theorem~\ref{thm:reg} (which is proved in \secref{sec:smoothness} below), and thus $f_\xi$ is smooth, and by extension so is $\xi=\hn_{f_\xi}$.

\subsection{The case $p\le d$ via Sobolev truncation}

If $p \le d$, we first need to regularize $\xi_n \in W^{1,p}(\M;T\N)$, by replacing them with $\txi_n \in W^{1,\infty}(\M;T\N)$, which are close to $\xi_n$ in $W^{1,p}$ and also uniformly Lipschitz.
This is where the relaxation of the energy comes into play, since, even if $\xi_n = \hn_{f_{\xi_n}}$ project onto immersions $f_{\xi_n}\in \Imm_p(\M;\N)$, we do not know how to approximate them with more regular functions within the space $\Imm_p(\M;\N)$.
This construction follows a similar path as in other manifold-valued elasticity results, namely \cite{KMS19,KM21}, with some technical complications due to the structure of the energy in codimension-1  and the non-compactness of $T\N$.

First, we need the following pointwise estimate:
\begin{lemma}\label{lem:auxcalc}
Let $\xi\in W^{1,p}(\M;T\N)$.
Denote $M = \|S\|_\infty$, where $S$ is the shape operator in $\E_p$.
If at a point in $\M$, $|D\xi| \ge (3+2M)\sqrt{d+1}$, then at that point,
\beq
|D\xi| \le (3+2M)\brk{\dist(Df_\xi\oplus \xi,\SO(G,\h)) + |Df_\xi\circ S + K^\h\circ D\xi|}.
\label{eq:auxcalc}
\eeq
\end{lemma}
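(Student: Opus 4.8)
The plan is to prove \eqref{eq:auxcalc} as a purely pointwise inequality, using only two structural facts already recorded in the excerpt: the Sasaki splitting $|D\xi|^{2} = |Df_\xi|^{2} + |K^\h\circ D\xi|^{2}$ from \eqref{eq:Dxi}, and the fact that every element of $\SO(G,\h)$ has (Hilbert--Schmidt) norm $\sqrt{d+1}$. Fix a point of $\M$ at which $|D\xi|\ge (3+2M)\sqrt{d+1}$ and abbreviate, at that point, $a = |Df_\xi|$, $b = |K^\h\circ D\xi|$, $D = \dist\brk{Df_\xi\oplus\xi,\SO(G,\h)}$ and $B = |Df_\xi\circ S + K^\h\circ D\xi|$; by \eqref{eq:Dxi}, $|D\xi| = \sqrt{a^{2}+b^{2}}\le a+b$.

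First I would bound $a$. Since $|Df_\xi\oplus\xi|\ge\max\{|Df_\xi|,|\xi|\}\ge a$, the reverse triangle inequality against any element of $\SO(G,\h)$ (of norm $\sqrt{d+1}$) gives $a\le D+\sqrt{d+1}$. Next I bound $b$: writing $K^\h\circ D\xi = \brk{Df_\xi\circ S + K^\h\circ D\xi} - Df_\xi\circ S$ and using that the Hilbert--Schmidt norm is submultiplicative against the operator norm together with $\|S\|_\infty = M$, we get $b\le B + |Df_\xi\circ S|\le B + M a$. Combining, $|D\xi|\le a+b\le (1+M)a + B\le (1+M)\brk{D+\sqrt{d+1}} + B$.

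The last step is absorption, and this is the only place the hypothesis $|D\xi|\ge (3+2M)\sqrt{d+1}$ is used: it gives $\sqrt{d+1}\le |D\xi|/(3+2M)$, hence $(1+M)\sqrt{d+1}\le \tfrac{1+M}{3+2M}|D\xi|$. Moving this term to the left-hand side yields $\tfrac{2+M}{3+2M}|D\xi|\le (1+M)D + B$, i.e.\ $|D\xi|\le \tfrac{3+2M}{2+M}\brk{(1+M)D + B}$; since $\tfrac{1+M}{2+M}\le 1$ and $\tfrac{1}{2+M}\le 1$, both coefficients are at most $3+2M$, which is exactly \eqref{eq:auxcalc}.

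I do not expect a genuine obstacle here: the statement is elementary once \eqref{eq:Dxi} and the norm identity for $\SO(G,\h)$ are in hand. The only points requiring care are keeping the norm inequalities pointed in the correct direction — in particular $|Df_\xi|\le|Df_\xi\oplus\xi|$ and $|Df_\xi\circ S|\le M|Df_\xi|$, both resting on working throughout with the Hilbert--Schmidt norm and on $M$ dominating the pointwise operator norm of $S$ — and checking that the threshold $(3+2M)\sqrt{d+1}$ is large enough for the absorption to close with final constant precisely $3+2M$, as the computation above confirms.
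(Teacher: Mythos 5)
Your proof is correct: every inequality is pointed the right way, and the final constants $\tfrac{(3+2M)(1+M)}{2+M}$ and $\tfrac{3+2M}{2+M}$ are indeed both at most $3+2M$, so the absorption closes with exactly the stated constant. The ingredients are the same as in the paper (the Sasaki splitting $|D\xi|\le|Df_\xi|+|K^\h\circ D\xi|$, the fact that elements of $\SO(G,\h)$ have norm $\sqrt{d+1}$, and $|Df_\xi\circ S|\le M|Df_\xi|$), but the organization differs: the paper splits into two cases according to whether $|Df_\xi|\ge 2\sqrt{d+1}$, using $\sqrt{d+1}\le\dist\brk{Df_\xi\oplus\xi,\SO(G,\h)}$ to absorb the additive constant in the first case and the hypothesis $|D\xi|\ge(3+2M)\sqrt{d+1}$ only in the second, whereas you always bound $|Df_\xi|\le\dist\brk{Df_\xi\oplus\xi,\SO(G,\h)}+\sqrt{d+1}$ and perform a single absorption of $(1+M)\sqrt{d+1}$ into $|D\xi|$ via the hypothesis. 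Your version avoids the case analysis at the cost of a slightly less transparent final constant check; both arguments are equally elementary and yield the same estimate.
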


\begin{proof}
By the definition of the Sasaki metric,
\[
|D\xi| \le |Df_\xi| + |K^\h\circ D\xi|.
\]
Consider first the case where
$|Df_\xi| \ge 2\sqrt{d+1}$. 
Then, by the triangle inequality and the fact that all the elements in $\SO(G,\h)$ are of norm $\sqrt{d+1}$,
\beq\label{eq:dist_SO_bound}
\dist(Df_\xi\oplus \xi,\SO(G,\h)) \ge |Df_\xi| - \sqrt{d+1} \ge \sqrt{d+1}.
\eeq
Thus,
\beq\label{eq:Dxi_lower_bnd}
\begin{split}
|D\xi| 
&\le |Df_\xi| + |K^\h\circ D\xi| \\
&\le |Df_\xi| + |Df_\xi\circ S| + |Df_\xi\circ S  + K^\h\circ D\xi| \\
&\le (1+M)|Df_\xi| + |Df_\xi\circ S + K^\h\circ D\xi| \\
&\le (1+M)\dist(Df_\xi\oplus \xi,\SO(G,\h)) + (1+M)\sqrt{d+1} +  |Df_\xi\circ S + K^\h\circ D\xi| \\
&\le 2(1+M)\dist(Df_\xi\oplus \xi,\SO(G,\h)) +  |Df_\xi\circ S + K^\h\circ D\xi|,
\end{split}
\eeq
which implies \eqref{eq:auxcalc}.

Otherwise, if $|Df_\xi| < 2\sqrt{d+1}$ and $|D\xi| \ge (3+2M)\sqrt{d+1}$, then 
\[
\begin{split}
|D\xi| &\le |Df_\xi| + |K^\h\circ D\xi| \\
&\le  |Df_\xi| + |Df_\xi\circ S| + |Df_\xi\circ S  + K^\h\circ D\xi| \\
&\le (1+M)|Df_\xi| + |Df_\xi\circ S + K^\h\circ D\xi| \\
&\le 2(1+M)\sqrt{d+1} + |Df_\xi\circ S + K^\h\circ D\xi| \\
&\le \frac{2(1+M)}{3 + 2M}|D\xi| + |Df_\xi\circ S + K^\h\circ D\xi|,
\end{split}
\]
from which follows that
\[
|D\xi|  \le (3 + 2M) |Df_\xi\circ S + K^\h\circ D\xi|,
\]
which implies \eqref{eq:auxcalc}.
\end{proof}

For a given $\rho>2\sqrt{d+1}$, denote $T^\rho\N := \{v\in T\N ~:~ |v|\le \rho\}$, which is a compact submanifold of $T\N$.
Denote by $\vp:[0,\infty)\to[0,1]$ a continuous, non-negative, compactly-supported function satisfying $\vp(t)=1$ for $t\le 2\sqrt{d+1}$ and $\vp(t)=0$ for $t\ge \rho$, and let $\hat{\xi}_n = \vp(|\xi_n|)\xi_n$.
Note that $\hat{\xi}_n: \M \to T^\rho\N$ is uniformly bounded, and $|D\hat{\xi}_n| \le C|D\xi_n|$ for some $C$ depending only on $\vp$.
Furthermore, 
\[
\begin{split}
\{x\in \M ~:~ \hat{\xi}_n(x) \ne \xi_n(x)\} 
&\subset \{|\xi_n|>2\sqrt{d+1}\} \\
&\subset  \{\dist(Df_{\xi_n}\oplus \xi_n,\SO(G,\h)) > \sqrt{d+1}\},
\end{split}
\]
where the last inclusion follows from the same argument as in \eqref{eq:dist_SO_bound}.
Since $\tE_p(\xi_n)\to 0$, it follows that 
\[
\dist(Df_{\xi_n}\oplus \xi_n,\SO(G,\h)) \to 0 \qquad\text{in measure}, 
\]
hence the volumes of all the sets above tend to zero. 
On the one hand,
\[
\begin{split}
\int_\M |\xi_n - \hat{\xi}_n|^p\,\Volume &= \int_{\{\xi_n \ne \hat{\xi}_n\}} |\xi_n - \hat{\xi}_n|^p\,\Volume \\
&\lesssim  \int_{\{\xi_n \ne \hat{\xi}_n\}} |\xi_n|^p\,\Volume + \int_{\{\xi_n \ne \hat{\xi}_n\}} |\hat{\xi}_n|^p\,\Volume.
\end{split}
\]
The first term tends to zero as $n\to\infty$ because $|\xi_n|^p$ is equi-integrable (it is bounded in $W^{1,p}$); the second term tends to zero because $|\hat{\xi}|$ is uniformly bounded.  Moreover,
\[
\int_\M |d\iota \circ D\hat{\xi}_n - d\iota \circ D\xi_n|^p\, \Volume \lesssim
\int_{\{\dist(Df_{\xi_n}\oplus \xi_n,\SO(G,\h)) > \sqrt{d+1}\}} |D\xi_n|^p\, \Volume \lesssim \tE_p(\xi_n) \to 0,
\]
where the last inequality follows from \eqref{eq:Dxi_lower_bnd}. Thus,
\[
\|\iota \circ \hat{\xi}_n - \iota \circ \xi_n\|_{W^{1,p}} \to 0.
\]

Denote $u_n = \iota \circ \hat{\xi}_n \in W^{1,p}(\M;\R^D)$.
By \cite[Proposition~A.1]{FJM02b}, there exists a constant $C>0$, depending on $\M,\g,p$ and $\|S\|_\infty$, and there exists a sequence of Lipschitz maps $\bar{u}_n \in W^{1,\infty}(\M;\R^D)$ having uniform Lipschitz constant $C$ such that
\[
\Vol_\g\brk{\{x\in \M ~:~ \bar{u}(x) \ne u(x)\}} \le C\int_{\{|Du_n|>(3+2M)\sqrt{d+1}\}} |Du_n|^p\, \Volume,
\]
and
\[
\|\bar{u}_n - u_n\|_{W^{1,p}}^p \le C\int_{\{|Du_n|>(3+2M)\sqrt{d+1}\}} |Du_n|^p \Volume.
\]
Using Lemma~\ref{lem:auxcalc}, and the fact that $|Du_n| = |D\hat{\xi}_n|$ (since $\iota$ is an isometric immersion), we obtain that
\beq
\label{eq:estimates_tu_n1}
\Vol_\g\brk{\{x\in \M ~:~ \bar{u}(x) \ne u(x)\}} \le C'\tE_p(\hat{\xi}_n),
\eeq
and
\beq
\label{eq:estimates_tu_n2}
\|\bar{u}_n - u_n\|_{W^{1,p}}^p \le C'\tE_p(\hat{\xi}_n),
\eeq
for some $C'>0$.
Note that the image of $\bar{u}_n$ is not in $\iota(T\N)$, which means that we cannot identify $\bar{u}_n:\M\to\R^D$ with a function $\bar{\xi}_n:\M\to T\N$; to rectify this problem, we resort to a projection.
Since $\tE_p(\hat{\xi}_n)\to 0$, it follows from \eqref{eq:estimates_tu_n1} that for every $\e>0$, and $n\in \mathbb{N}$ large enough (depending on $\e$), every ball of radius $\e$ in $\M$ contains a point $x$ for which $\bar{u}_n(x) = u_n(x)\in\iota (T^\rho \N)$.
Since $\bar{u}_n$ are uniformly Lipschitz, we obtain that
\[
\max_{x\in \M} \dist\brk{\bar{u}_n(x), \iota (T^\rho \N)} \to 0.
\]
Thus, for $n$ large enough, $\bar{u}_n(x)$ lies in a tubular neighborhood of $\iota(T\N)$ on which the orthogonal projection operator $P$ is well-defined and smooth (this is where we use the compactness of $T^\rho\N$ and the reason we had to truncate $\xi_n$ to $\hat{\xi}_n$ --- we had to ensure that the image of $\bar{u}_n$ is uniformly close to a compact subset of $\iota (T\N)$).
Define $\tilde{u}_n = P\circ \bar{u}_n$, and $\txi_n = \iota^{-1}\circ \tilde{u}_n$.
A direct calculation, as in \cite[pp.~391--2]{KMS19}, shows that $\tilde{u}_n$ and $\txi_n$ are uniformly Lipschitz,  satisfy \eqref{eq:estimates_tu_n1} and \eqref{eq:estimates_tu_n2} , and furthermore, that $\tE_q(\txi_n)\to 0$ for any $q\in (1,\infty)$.

We can now apply the previous step for $\txi_n$ , for some $q>d\ge p$, and obtain that $\txi_n\to \xi=\hn_f$ in $W^{1,q}(\M;T\N)$ for some isometric immersion $f\in \Imm_\infty(\M;\N)$ whose shape operator is $S$. 
Since $\|\tilde{u}_n - u_n\|_{W^{1,p}}\to 0$, it follows by definition that $\dist_{W^{1,p}}(\txi_n,\hat{\xi}_n) \to 0$, and thus also $\dist_{W^{1,p}}(\txi_n,\xi_n) \to 0$. 
It therefore follows that $\xi_n \to \hn_f$ in $W^{1,p}(\M;T\N)$.

\section{Proof of Theorem~\ref{thm:reg}}
\label{sec:smoothness}

In this section we prove \thmref{thm:reg}.
First, we prove that $ f\in W^{2,2} $, in the sense that any coordinate representation of $f$ is in $W^{2,2}$. Then, in order to prove that $f$ is smooth, we prove an analog version of \cite[Lemma~3.1]{MS19}, where the range is a Riemannian manifold instead of $\R^n$ (for our purpose, we are only interested in codimension-1). This is achieved by the following two propositions:

\begin{proposition}
Let $f\in\Imm_p(\M;\N)$, $p> d$. Suppose that $f^*\h$ (which is a priori only defined almost-everywhere) is a smooth metric on $\M$.
Then $f\in W^{2,2}(\M;\N)$ (in coordinates).
\end{proposition}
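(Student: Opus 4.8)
The plan is to establish interior $W^{2,2}$ regularity by working in local coordinates and exploiting the fact that, since $f^*\h$ is smooth, the map $f$ is an isometric immersion (up to a fixed diffeomorphism of the source) of a smooth Riemannian manifold into $(\N,\h)$ whose induced metric is smooth. The key geometric input is that the second fundamental form of an isometric immersion is controlled, via the Gauss and Codazzi equations, by the curvatures of the domain and target metrics --- but here we only have $W^{1,\infty}$ regularity, so one cannot differentiate $f$ twice classically. Instead, the idea is to invoke the compactness/regularity machinery for low-energy thin bodies from \cite{KS14}, exactly as advertised in the discussion following Theorem~\ref{thm:reg}: one thickens $(\M,\g)$ and the map $f$, observes that the corresponding thin-body energy is small (in fact the relevant configuration has vanishing stretching), and extracts from the FJM-type rigidity estimate that the thickened map, hence $f$ itself, gains one weak derivative in $L^2$.

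More concretely, I would proceed as follows. First, fix a coordinate chart $X:\Omega\subset\R^d\to\M$ and, using $p>d$ so that $f$ is continuous, a chart $Y:\tilde\Omega\subset\R^{d+1}\to\N$ with $f(X(\Omega))\subset Y(\tilde\Omega)$; write $F = Y^{-1}\circ f\circ X \in W^{1,\infty}(\Omega;\R^{d+1})$ and $\nu = (DY)^{-1}\circ\hn_f\circ X \in W^{1,\infty}(\Omega;\R^{d+1})$. The smoothness of $f^*\h$ means that the Gram matrix $(\partial_i F^\alpha)(\partial_j F^\beta)\h_{\alpha\beta}(F)$ is a smooth (positive-definite) matrix field on $\Omega$. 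Second, introduce the thickened map $\Phi:\Omega\times(-h,h)\to\N$, $\Phi(x,t) = \exp^\h_{f(X(x))}\!\big(t\,\hn_f(X(x))\big)$ for small $h$; because $\hn_f$ is a unit normal in $W^{1,\infty}$ and $f^*\h$ is smooth, the pullback metric $\Phi^\h$ is, modulo $W^{1,\infty}$ errors, the ``thickened'' metric $G = f^*\h \oplus dt^2 + O(t)$, and one checks that $\dist(D\Phi,\SO)$ is small in the relevant sense while $D\Phi$ is uniformly bounded. Third, apply the interior regularity result of \cite{KS14} for such thin-body configurations of arbitrary dimension and codimension: it gives $\Phi\in W^{2,2}_{\mathrm{loc}}$ on the thickened domain, with the $W^{2,2}$ norm controlled by the (smooth) data. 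Fourth, restrict to the slice $t=0$: the traces $\partial_\alpha\partial_\beta\Phi|_{t=0}$ and $\partial_t\partial_\alpha\Phi|_{t=0}$ recover the second derivatives of $F$ and the first derivatives of $\nu$ in $L^2_{\mathrm{loc}}$, yielding $F\in W^{2,2}_{\mathrm{loc}}(\Omega;\R^{d+1})$; a covering argument over finitely many charts then gives $f\in W^{2,2}(\M;\N)$ in coordinates, using compactness of $\M$.

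The main obstacle I anticipate is the construction and analysis of the thickening when only $\hn_f\in W^{1,\infty}$ (not $W^{2,p}$): one must verify that $\Phi$ is genuinely in $W^{1,\infty}$, that $\Phi^\h$ is the metric one expects up to controllable errors, and --- most delicately --- that the hypotheses of the \cite{KS14} regularity theorem are literally met in this low-regularity, curved-target setting (in particular that the smoothness of $f^*\h$, rather than of $f$ itself, suffices to run the estimate). A secondary technical point is patching the local $W^{2,2}$ estimates into a global statement; this is routine given compactness of $\M$ and the fact that $W^{2,2}$-in-coordinates is a chart-independent notion here because $f$ and the transition maps are Lipschitz, but it must be stated carefully. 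Once $f\in W^{2,2}$ is in hand, the remaining smoothness (Theorem~\ref{thm:reg}) follows by bootstrapping the coordinate formulas for the second derivatives of $f$ in terms of the (smooth) metric, (smooth) shape operator, and the frame $(\partial_i f,\hn_f)$, as indicated in the text --- the second Proposition referenced after this statement.
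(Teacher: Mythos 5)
Your high-level strategy --- thicken the body, verify the finite-bending property, and invoke the compactness theorem of \cite{KS14} --- is the same as the paper's, but your implementation has a genuine gap at exactly the point you flag as ``most delicate''. The result you want to cite, \cite[Thm.~5.1]{KS14}, concerns configurations of a thin Riemannian body mapped into \emph{Euclidean} space; as the paper notes, its proof rests on the Euclidean FJM rigidity estimate, for which no curved-target analogue is available here. Your thickening $\Phi(x,t)=\exp^\h_{f(x)}(t\,\hn_f(x))$ produces a map into the curved manifold $(\N,\h)$, so the hypotheses of that theorem are not met, and composing with a coordinate chart destroys the closeness of $D\Phi$ to $\SO$ (a chart is not an isometry). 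This is precisely why the paper's proof routes through a Nash isometric embedding $\iota:(\N,\h)\to(\R^D,\euc)$: it chooses a smooth orthonormal frame $\{\nu_i\}_{i=2}^{D-d}$ of the normal bundle of $\iota(\N)$ in $\R^D$ and extends $f$ \emph{affinely} in all $D-d$ normal directions of $\iota\circ f$,
\[
F(q,t)=\iota\circ f(q)+t_1\, d\iota(\hn_f(q))+\sum_{i=2}^{D-d}t_i\,\nu_i(\iota\circ f(q)),
\]
on $\M\times(-h,h)^{D-d}$ with the product metric $G=f^*\h\oplus I_{D-d}$. Then $F$ is a codimension-zero configuration with Euclidean target, $dF$ differs from an explicit field $A_f\in\SO(G,\euc)$ only by terms linear in $t$ involving $d\hn_f$ and $df$, and the finite-bending bound $\dashint\dist^2(dF,\SO(G,\euc))\,\dVol_G\le Ch^2$ follows from $df,d\hn_f\in L^2$ (which is why the paper assumes $d\ge 2$, hence $p>2$). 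To salvage your single-direction exponential thickening you would first have to prove a curved-target version of \cite{KS14}, which is not available.

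A secondary problem is your final step of ``restricting to the slice $t=0$'': knowing $\Phi\in W^{2,2}_{\mathrm{loc}}$ on the thickened domain does not give $L^2$ control of the second tangential derivatives on a hypersurface slice (traces of $W^{2,2}$ functions lose half a derivative, so $\partial_x\partial_x\Phi|_{t=0}$ lands only in $H^{-1/2}$). No such trace argument is needed: the conclusion of \cite[Thm.~5.1]{KS14} is $W^{2,2}$ regularity of the mid-surface map $f$ itself, not of the thickened configuration. The parts of your outline concerning the pointwise estimate $\dist(D\Phi,\SO)=O(t)(1+|d\hn_f|)$ and the patching over finitely many charts are fine in spirit, but they do not repair the two issues above.
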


In fact, the smoothness of $f^*\h$ is not needed here; $C^1$ would suffice, and possibly even Sobolev regularity.

\begin{proof}
Assume $d\geq 2$, hence $p>2$.
Let $\iota:(\N,\h) \to \R^D$ be an isometric embedding, i.e., $\iota(\N)$ is a smooth submanifold of $\R^D$ of codimension $D - d - 1$. 
We thicken $\M$ into $ \tM=\M\times (-h, h)^{D-d}$, with $h$ small enough, and endow it with the product metric 
\[
G= \begin{pmatrix}
f^*\h & 0\\
0 & I_{D-d}
\end{pmatrix}. 
\]
Let $\{\nu_i\}_{i=2}^{D-d}$ be a smooth orthonormal frame along the submanifold $\iota(\N)\subset \R^D$, orthogonally complementing the image of $d\iota$ on $\iota(\N)$.
We extend $f$ into a function  $F:\tM \mapsto \R^D$ by
\[ 
F(q,t_1,...,t_{D-d}) = \iota \circ f(q)+t_1 d\iota_{f(q)}(\hn_f(q)) +\sum_{i=2}^{D-d} t_i \nu_i(\iota\circ f(q)),
\] 
which we may also write in the form
\[ 
F = \iota \circ f \circ \pi +\pi_1 \cdot  (d\iota\circ \hn_f\circ \pi)  +\sum_{i=2}^{D-d} \pi_i \cdot  (\nu_i\circ \iota\circ f\circ \pi),
\] 
where $\pi:\tilde\M \to \M$ and $\pi_i :\tilde\M\to \R$ are the natural projections, $\pi(q,t_1,...,t_{D-d}) = q$ and $\pi_i(q,t_1,...,t_{D-d}) = t_i$, and in the second term we view $\hn_f$ as a map $\M\to T\N$.

Differentiating, $dF:T\tilde\M\to \R^D$ is given by
\[ 
\begin{split}
dF &=
d\iota \circ Df\circ D\pi + d\pi_1\otimes (d\iota\circ \hn_f\circ \pi) +\pi_1 \cdot (d^2\iota \circ D\hn_f\circ D\pi) \\
&+ \sum_{i=2}^{D-d} d\pi_i\otimes (\nu_i\circ \iota\circ f\circ \pi) 
+  \sum_{i=2}^{D-d} \pi_i \cdot (d\nu_i \circ D\iota \circ Df \circ D\pi).  
\end{split}
\] 
Consider
\[ 
A_f = d\iota \circ Df\circ D\pi + d\pi_1\otimes (d\iota\circ \hn_f\circ \pi) 
+ \sum_{i=2}^{D-d} d\pi_i\otimes (\nu_i\circ \iota\circ f\circ \pi).
\]
Since $f$ is an isometric immersion by the definition of $f^*\h$, $\iota$ is an isometric embedding and $\{\nu_i\}_{i=2}^{D-d}$ is an orthonormal frame along the submanifold $\iota(\N)\subset \R^D$, and $D\pi, d\pi_i$ are pairwise orthogonal by the choice of $G$, we obtain that $A_f  \in SO(G,\euc)$. 
Therefore, 
\[
\begin{split}
\dashint_{\tilde\M} \dist^2(dF,SO(G,\euc))\Vol_G
&\le \dashint_{\M} \dashint_{(-h_n, h_n)^{K-d}} |dF-A_f|^2\,dt\, \dVol_{f^*\h} \\
&\hspace{-4cm} \le C \dashint_{\M} \dashint_{(-h_n, h_n)^{D-d}} \brk{|t_1|^2|d^2\iota \circ D\hn_f\circ D\pi|^2 
 + \sum_{i=2}^{D-d} |t_i|^2 |d\nu_i \circ D\iota \circ Df \circ D\pi|^2} dt \,\dVol_{f^*\h}\\
&\hspace{-4cm}\le C \brk{\dashint_{(-h_n, h_n)^{D-d}}\sum_{i=1}^{D-d}|t_i|^2 dt  }\\
&\hspace{-4cm}\le Ch^2,
\end{split}
\]
where in the passage to the fourth line we used the facts that $d\hn_f$, and $df$ are bounded in $L^2(\M)$, and that $d\iota$,  $d^2\iota$ and  $d\nu_i$ are bounded in $ L^\infty $ as smooth functions on compact domains.

We have thus proved that $F$ satisfies the finite bending property defined in \cite[Eq.~(2.3)]{KS14}, where $F$ in the present  work corresponds to $f_h$ in \cite{KS14}. It follows from \cite[Thm.~5.1]{KS14} that $f$ (which unfortunately corresponds to $F$ in \cite{KS14}) is in $W^{2,2}$. 
\end{proof}

\begin{proposition}
Let $f\in\Imm_p(\M;\N)$, $p> d$. Suppose that $\g = f^*\h$ is a smooth metric on $\M$ and that the shape operator $S$ of $f(\M)$ in $\N$ is smooth. Then, $f$ is smooth.
\end{proposition}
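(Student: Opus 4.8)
The plan is to invoke the preceding proposition to place $f\in W^{2,2}$ (in coordinates), write down the Gauss--Weingarten structure equations satisfied by $f$ and its Gauss map $\hn_f$, observe that together they express every first derivative of the pair $(df,\hn_f)$ as a polynomial in $(df,\hn_f)$ with smooth coefficients, and then run a regularity bootstrap in the scale $W^{m,\infty}$.

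Fix coordinate charts $x_\M$ on $\M$ and $x_\N$ on $\N$ as in Section~\ref{sec:coordinates}. Since $p>d$, Morrey's embedding gives $f\in C^0$, so $f(\M)$ is contained in a compact subset of $\N$ and $\h\circ f$, the Christoffel symbols $\Gamma^\alpha_{\beta\gamma}\circ f$ of $\N$ along $f$, and all their derivatives, are bounded. Because $\g=f^*\h$ is a smooth Riemannian metric and $\h$ is uniformly positive-definite along $f$, the a.e.\ identity $\g_{ij}=\h_{\alpha\beta}(f)\,\partial_if^\alpha\partial_jf^\beta$ forces $df\in L^\infty$, and $\hn_f$ being a unit normal gives $\hn_f\in L^\infty$. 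From \eqref{eq:S_f}, in coordinates $\hn_f$ satisfies the Weingarten identity
\[
\partial_i\hn_f^\alpha=-\Gamma^\alpha_{\beta\gamma}(f)\,\partial_if^\beta\,\hn_f^\gamma-S^k_i\,\partial_kf^\alpha ,
\]
whose right-hand side is already in $L^\infty$, so $\hn_f\in W^{1,\infty}$. Using $f\in W^{2,2}$, the Gauss formula for the isometric immersion $f$ holds a.e.: the tangential part of $\nabla^{f^*T\N}_{\partial_i}(df\,\partial_j)$ is $df$ applied to the Levi-Civita connection of $\g=f^*\h$ (the usual Koszul computation, valid here because $\nabla^{f^*T\N}$ is compatible with $f^*\h$ and $\partial_i\partial_jf=\partial_j\partial_if$ in $L^2$), while its normal part is $\II_{ij}\hn_f$ with $\II_{ij}=\g(S\partial_i,\partial_j)$ smooth --- the latter obtained by differentiating $\h(df\,\partial_j,\hn_f)=0$ and substituting the Weingarten identity. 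In coordinates this is the Gauss identity
\[
\partial_i\partial_jf^\alpha=-\Gamma^\alpha_{\beta\gamma}(f)\,\partial_if^\beta\partial_jf^\gamma+\Gamma^k_{ij}(\g)\,\partial_kf^\alpha+\II_{ij}\,\hn_f^\alpha ,
\]
where $\Gamma^k_{ij}(\g)$ are the (smooth) Christoffel symbols of $\g$.

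The bootstrap proceeds by induction on $m\ge1$, the base case $m=1$ being the previous paragraph. Suppose $f,\hn_f\in W^{m,\infty}$ on each chart of a fixed finite atlas. Then $\Gamma^\alpha_{\beta\gamma}\circ f\in W^{m,\infty}$ by the chain rule (a smooth map composed with a $W^{m,\infty}$ map into a compact set is $W^{m,\infty}$), and $df\in W^{m-1,\infty}$; since $W^{k,\infty}$ on a bounded domain is a Banach algebra for every $k\ge0$ (with $W^{0,\infty}=L^\infty$), the right-hand side of the Gauss identity lies in $W^{m-1,\infty}$, hence $df\in W^{m,\infty}$, i.e.\ $f\in W^{m+1,\infty}$. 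Consequently $\Gamma^\alpha_{\beta\gamma}\circ f\in W^{m+1,\infty}$ and $df\in W^{m,\infty}$, so the right-hand side of the Weingarten identity lies in $W^{m,\infty}$, giving $\hn_f\in W^{m+1,\infty}$. Iterating over $m$ yields $f\in W^{m,\infty}$ for all $m$, and since $\M$ is covered by finitely many charts, $f\in C^\infty(\M;\N)$.

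The main obstacle, and the only point that is not bookkeeping, is establishing the Gauss identity with the regularity available a priori: one must know that the tangential part of the pullback connection applied to $df$ is the Levi-Civita connection of $\g=f^*\h$, and that its normal part is controlled by the given smooth shape operator, for $f$ merely in $W^{2,2}\cap W^{1,\infty}$ rather than $C^2$. This is precisely where the preceding proposition is needed --- it makes $\partial_i\partial_jf$ a genuine $L^2$ function, so that these classical identities can be derived using the Leibniz and chain rules for Sobolev functions. Once the two structure identities are in place, the remainder is the manifold-target analogue of \cite[Lemma~3.1]{MS19}, the only new feature relative to a Euclidean target being the harmless composed Christoffel terms $\Gamma^\alpha_{\beta\gamma}(f)$, which are absorbed by the chain rule at each step of the bootstrap.
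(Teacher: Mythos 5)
Your proposal is correct and follows essentially the same route as the paper: invoke the preceding proposition to get $f\in W^{2,2}$, derive the Gauss and Weingarten structure equations in coordinates from the a.e.\ identities for $f^*\h$, the orthogonality of $\hn_f$, and the shape operator (valid at this regularity by the Sobolev Leibniz rule for $W^{1,p}\cap L^\infty$ functions), and then bootstrap using the algebra property of these spaces. The only cosmetic difference is that you write out the Gauss identity explicitly and phrase the induction in the scale $W^{m,\infty}$, whereas the paper describes the same bootstrap more tersely as differentiating the metric and orthogonality identities and substituting the Weingarten equation.
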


The proof below works, with minor adjustments, also for a smooth second fundamental form instead of a smooth shape operator.

\begin{proof}
Since $f$ is continuous and since smoothness is a local property, we can consider the claim in coordinates; henceforth, $f:\Omega\subset\R^d \to \R^{d+1}$ and $\hn_f:\Omega\to\R^{d+1}$, satisfying the following set of equations,
\beq
\g_{ij}(x) = \h_{\alpha\beta}(f(x)) \, \partial_i f^\alpha(x) \, \partial_j f^\beta(x),
\label{eq:smoothness1}
\eeq
\beq
\partial_i \hn_f^\alpha(x) - \partial_i f^\beta(x)\, \Gamma^\alpha_{\beta\gamma}(f(x)) \hn_f^\gamma(x) = -\partial_j f^\alpha(x)\, S^j_i(x),
\label{eq:smoothness2}
\eeq
and
\beq
\h_{\alpha\beta}(f(x)) \, \partial_i f^\alpha(x) \, \hn_f^\beta(x) =0.
\label{eq:smoothness3}
\eeq
Here and below we use Latin indexes for coordinates in $\M$ and Greek indexes for coordinates in $\N$; the Einstein summation rule is assumed; $\Gamma^\alpha_{\beta\gamma}$ are the Christoffel symbols of the Levi-Civita connection in $\N$.
It is given that $\g_{ij}$ and $S^j_i$ are smooth.

It follows from the previous proposition that $f^\alpha\in W^{2,2}(\Omega)$, which together with \eqref{eq:smoothness1} implies that 
\[
\partial_i f^\alpha \in W^{1,2}(\Omega)\cap L^{\infty}(\Omega).
\]
It then follows from \eqref{eq:smoothness2} that
\[
\hn_f^\alpha\in W^{2,2}(\Omega)\cap W^{1,\infty}(\Omega).
\]

The rest of the proof follows a standard bootstrapping procedure, differentiating \eqref{eq:smoothness1} and \eqref{eq:smoothness3} and substituting \eqref{eq:smoothness2}, using the fact that $\{\partial_if(x)\}\cup\{\hn_f(x)\}$ forms an $f^*\h$-orthonormal basis for $\R^{d+1}$. The bootstrapping uses  the following product rule for Sobolev spaces \cite[Prop.~9.4]{Bre11}:
Let $u,v \in W^{1,p}(\Omega)\cap L^{\infty}(\Omega)$ with $1\leq p\leq \infty$. Then $uv\in W^{1,p}(\Omega)\cap L^{\infty}(\Omega)$  and 
\[
\partial_i (uv)=\partial_i u\, v+u\,\partial_i v.
\]
The fact that the application of the Leibniz rule requires to the very least functions in $W^{1,p}(\Omega)\cap L^{\infty}(\Omega)$ is the reason we had to first establish that $\partial_if^\alpha$ and $n_f^\alpha$ are in this space. 
\end{proof}

\paragraph{Acknowledgments}
RK was partially funded by ISF Grant 560/22 and CM was partially funded by ISF Grant 1269/19 and BSF Grant 2022076.

\appendix

\section{Quantitative stability in codimension-1}\label{sec:FJM_codim1}

In this appendix we prove a quantitative version of Theorem~\ref{thm:main} for the case $\N = \R^{d+1}$, under the assumption that the metric $\g$ and the shape operator $S$ are compatible with the geometry of $\R^{d+1}$.

\begin{theorem}
\label{thm:FJM_codim1}
Let $(\M,\g)$ be an oriented, connected, simply-connected, compact $d$-dimensional Riemannian manifolds with Lipschitz boundary, and let $p\in (1,\infty)$. 
Let $S$ by a smooth symmetric $(1,1)$ tensor field on $\M$. 
If $\g$ and $S$ satisfy the Gauss-Codazzi compatibility conditions, then there exists a constant $C$ depending on $(\M,\g)$, $S$ and $p$, such that there exists for every $f\in \Imm_p(\M;\R^{d+1})$ a smooth isometric immersion $f_0:\M\to \R^{d+1}$ having shape operator $S$, satisfying
\[
\begin{split}
& \|f - f_0\|_{W^{1,p}(\M;\R^{d+1})} + \|\hn_f - \hn_{f_0}\|_{W^{1,p}(\M;\R^{d+1})} \le \\
&\qquad C\brk{\|\dist(df,\O(\g,\euc))\|_{L^p(\M)} + \|\nabla \hn_f + df\circ S\|_{L^p(\M)}},
\end{split}
\]
where $\hn_f$ and $\hn_{f_0}$ are the unit normals to $f(\M)$ and $f_0(\M)$.
\end{theorem}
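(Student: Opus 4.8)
The plan is to reduce Theorem~\ref{thm:FJM_codim1} to the classical Friesecke--James--M\"uller (FJM) rigidity estimate \cite{FJM02b} by thickening $\M$ into a $(d+1)$-dimensional manifold which, because the target is Euclidean, is \emph{flat}. Fix a small $h>0$ depending only on $\|S\|_\infty$, put $\tM=\M\times(-h,h)$, and equip it with the metric $G$ given at $(q,t)$ by $(\id-tS)^{*}\g\oplus dt^{2}$, which involves only $\g$ and $S$ and is a genuine metric once $\id-tS$ is invertible everywhere. By the fundamental theorem of surface theory — here connectedness, simple connectivity, compactness, the Lipschitz boundary and the Gauss--Codazzi hypothesis all enter — there is a smooth isometric immersion $f_\star:\M\to\Rd$ with shape operator $S$; its normal tube map $\Phi(q,t)=f_\star(q)+t\,\hn_{f_\star}(q)$ is an immersion for $h$ small, and a one-line computation, using that $\hn_{f_\star}$ satisfies $d\hn_{f_\star}=-df_\star\circ S$, gives $\Phi^{*}\euc=G$. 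Thus $(\tM,G)$ is a compact, connected, simply-connected \emph{flat} manifold with Lipschitz boundary and $\Phi$ is a global developing immersion, i.e. a local isometry into $\Rd$.

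Given $f\in\Imm_p(\M;\Rd)$, I would extend it to $\tilde F(q,t)=f(q)+t\,\hn_f(q)\in W^{1,p}(\tM;\Rd)$, which is affine in $t$. Writing $S_f$ for the shape operator of $f$ (so $d\hn_f=-df\circ S_f$ pointwise, and $|df\circ(S_f-S)|$ is precisely the bending density of $\E_p^B(f)$), one has $d\tilde F_{(q,t)}(v,s)=df_q\brk{(\id-tS_f)v}+s\,\hn_f(q)$. Comparing with the element of $\SO(G,\euc)$ built from the nearest $P\in\O(\g,\euc)$ to $df_q$ together with its completing unit normal — exactly as $\Phi$ is built from $f_\star$ — gives $\dist(d\tilde F,\SO(G,\euc))\lesssim\dist(df,\O(\g,\euc))+|t|\,|df\circ(S_f-S)|$ as long as $\dist(df,\O(\g,\euc))$ stays below a fixed threshold; in the complementary large-stretching regime one bounds $\dist(d\tilde F,\SO(G,\euc))\le|d\tilde F|+\sqrt{d+1}$ and absorbs it into the same right-hand side, exactly in the manner of \lemref{lem:auxcalc} (this is where the absence of any a-priori bound on $S_f$ is handled). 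Integrating the $p$-th power over $t$ and over $\M$ then yields $\|\dist(d\tilde F,\SO(G,\euc))\|_{L^p(\tM)}^{p}\lesssim h\,\E_p^{S}(f)+h^{1+p}\,\E_p^{B}(f)\lesssim\E_p(f)$.

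The heart of the argument is a quantitative rigidity statement on $(\tM,G)$: there is $C=C(\M,\g,S,p)$ such that for every $\tilde F\in W^{1,p}(\tM;\Rd)$ there is a rigid motion $Q$ of $\Rd$ with rotation part in $\SO(d+1)$ and $\|\tilde F-Q\circ\Phi\|_{W^{1,p}(\tM;\Rd)}\le C\,\|\dist(d\tilde F,\SO(G,\euc))\|_{L^p(\tM)}$. I would prove it by covering $(\tM,G)$ with finitely many connected charts $\psi_i:B_i\to\Rd$ that are $(G,\euc)$-isometries onto Lipschitz subdomains, applying the classical FJM estimate to each $\tilde F\circ\psi_i^{-1}$ to obtain rigid motions $Q_i$, and noting that $\Phi\circ\psi_i^{-1}$, being a local isometry of a connected open subset of $\Rd$, is the restriction of a rigid motion $P_i$; hence $\hat Q_i:=Q_i\circ P_i^{-1}$ obeys $\tilde F\approx\hat Q_i\circ\Phi$ on $B_i$. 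On each nonempty overlap $B_i\cap B_j$ the maps $\hat Q_i\circ\Phi$ and $\hat Q_j\circ\Phi$ are $L^p$-close on a fixed set of positive measure, which forces $\hat Q_i$ and $\hat Q_j$ to be close as rigid motions; since the cover is finite with connected nerve — and since the global $\Phi$ exists, so no monodromy arises, which is again where simple connectivity is used — all $\hat Q_i$ lie within $C\|\dist\|_{L^p(\tM)}$ of one common $Q$, and summing the chart estimates gives the claim. Finally I would set $f_0=Q\circ f_\star=(Q\circ\Phi)|_{t=0}$, a smooth isometric immersion with shape operator $S$ (unchanged under an orientation-preserving rigid motion) and unit normal $\hn_{f_0}=\partial_t(Q\circ\Phi)$; since $\tilde F$ and $Q\circ\Phi$ are both affine in $t$, their difference equals $(f-f_0)(q)+t(\hn_f-\hn_{f_0})(q)$, and because on the space of affine curves $t\mapsto a+tb$ the $L^p(-h,h)$-norm is equivalent, for fixed $h$, to $|a|+|b|$, Fubini yields $\|\tilde F-Q\circ\Phi\|_{W^{1,p}(\tM)}^{p}\asymp\|f-f_0\|_{W^{1,p}(\M)}^{p}+\|\hn_f-\hn_{f_0}\|_{W^{1,p}(\M)}^{p}$. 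Combining this with the energy estimate and the elementary equivalence $\E_p(f)^{1/p}\asymp\|\dist(df,\O(\g,\euc))\|_{L^p}+\|\nabla\hn_f+df\circ S\|_{L^p}$ finishes the proof.

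The main obstacle is the rigidity statement on the abstract flat manifold $(\tM,G)$: the FJM theorem is available only for Lipschitz subdomains of Euclidean space, so one must transplant it along the developing immersion $\Phi$ and glue the local rigid motions coherently — and it is precisely here that compactness (a finite chain of charts), connectedness, and simple connectivity of $\tM$, equivalently the global existence of $f_\star$ and $\Phi$, are all genuinely needed. A smaller but real nuisance, dealt with as in \lemref{lem:auxcalc}, is that $f$ carries no a-priori control on $S_f$, so the pointwise energy comparison in the thickening step must be split according to whether $df$ is close to $\O(\g,\euc)$.
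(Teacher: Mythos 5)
Your proposal is correct and follows the same overall architecture as the paper's proof: the thickening $\M\times(-h,h)$ with the metric $G=(\id-tS)^*\g\oplus dt^2$, the tube map $\Phi(q,t)=\iota(q)+t\,\hn_\iota(q)$ of the reference immersion supplied by Gauss--Codazzi, the affine-in-$t$ extension $F(q,t)=f(q)+t\,\hn_f(q)$, the comparison of $dF$ with the element of $\SO(G,\euc)$ built from the projection $\O(df)$, and the reduction to the Euclidean FJM estimate. The differences are in execution. First, the paper invokes FJM directly on the map $F\circ\Phi^{-1}:\Phi(\M_h)\to\R^{d+1}$, implicitly treating $\Phi$ as a bi-Lipschitz parametrization of a Lipschitz domain; your intrinsic rigidity lemma on the flat manifold $(\tM,G)$, proved by covering with finitely many isometric charts and gluing the local rigid motions across overlaps, is longer but strictly more robust --- it does not require $\iota$ (hence $\Phi$) to be injective, which the paper's one-line application does. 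Second, to pass from $\|dF-Q\circ d\Phi\|_{L^p(\M_h)}$ down to the $W^{1,p}$ distances on $\M$, the paper restricts the integral to the subset of $\M_h$ on which the cross term $2t\,(df-df_0,d\hn_f-d\hn_{f_0})_\g$ is nonnegative, whereas you use equivalence of norms on the finite-dimensional space of affine functions $t\mapsto a+tb$; both are valid and yours is arguably cleaner. Third, your case split for large $\dist(df,\O(\g,\euc))$ in the upper bound is harmless but unnecessary: since $f\in\Imm_p$, $d\hn_f=-df\circ S_f\in L^p$, and $|df\circ S_f-\O(df)\circ S|\le|\nabla\hn_f+df\circ S|+\|S\|_\infty\,|df-\O(df)|$ pointwise, so no threshold is needed --- the paper does without one.
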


Note that the right-hand side in the equality is essentially $\E_p^{1/p}(f)$.
If $\M$ is not simply-connected, one can rather assume that $\g$ and $S$ are compatible in the sense that there exists an isometric immersion $\M\to \R^{d+1}$ whose shape operator is $S$.

\begin{proof}
Since $\g$ and $S$ are compatible, there exists, modulo a rigid transformation, a unique
smooth immersion $\iota:\M\to\R^{d+1}$, such that $\g = \iota^*\euc$ and $d\hn_\iota = -d\iota\circ S$, where  $\hn_\iota$ is the unit-normal to $\iota(\M)$. 

Consider the following diagram:
\[
\begin{xy}
(-10,0)*+{(\M,\g)}="S";
(30,0)*+{(\M_h,G)}="RS";
(30,20)*+{(\R^{d+1},\euc)}="R";
{\ar@{->}_{\zeta}"S";"RS"};
{\ar@{->}^{\iota}"S";"R"};
{\ar@{->}_{\Phi}"RS";"R"};
\end{xy}
\]
where $\M_h = \M\times(-h,h)$, with $h$ to be specified below, $\zeta(q) = (q,0)$, and 
\[
\Phi(q,t) = \iota(q) + t\, \hn_\iota(q).
\]
The metric $G$ on $\M_h$ is defined by
\[
\ip{(v,s),(w,r)}_{G_{(q,t}} = \ip{v - t\, S_q(v),w - t\, S_q(w)}_{\g_q} + sr ,
\]
where $v,w\in T_q\M$ and $s,r\in\R$. For $G$ to be a metric, $h$ has to be restricted; the choice of $h = 2/\|S\|_\infty$ guarantees that $h$ is metric. Furthermore, $G$ is equivalent to the product metric $\tilde{G} = \g + dt\otimes dt$, namely,
\[
c\brk{|v|_\g^2 + s^2} \le |(v,s)|_G^2 \le C\brk{|v|_\g^2 + s^2} 
\]
for some constants $c$ and $C$, depending only on $\g$ and $S$.
Finally, it follows that $\iota$, $\zeta$ and $\Phi$ are all isometric immersions.

Let $f\in \Imm_p(\M;\R^{d+1})$, which we extend into a map $F:\M_h\to\R^{d+1}$, given by
\[
F(q,t) = f(q) + t\, \hn_f(q),
\]
where $\hn_f:\M\to \R^{d+1}$ is the unit-normal to $f(\M)$.
Denote by $S_f$ the shape operator of $f(\M)$. 

Consider the map
\[
F\circ\Phi^{-1} : \Phi(\M_h) \subset\R^{d+1} \to \R^{d+1}.
\]
By the FJM inequality, there exists an isometry $Q\in\SO(d+1)$, such that
\[
\|d(F\circ\Phi^{-1}) - Q\|_{L^p(\Phi(\M_h);\R^{d+1})} \le C\,\|\dist(d(F\circ\Phi^{-1}),\SO(d+1))\|_{L^p(\Phi(\M_h);\R^{d+1})},
\]
where $C$ only depends on $\M$, $h$ and $p$. Since $\Phi$ is an isometric immersion, changing variables,
\beq
\|dF - Q\circ d\Phi\|_{L^p(\M_h;\R^{d+1})} \le C\,\|\dist(dF,\SO(G,\euc))\|_{L^p(\M_h;\R^{d+1})}.
\label{eq:FJMxxx1}
\eeq

The goal is to obtain an inequality involving only $f$.
We start with the right-hand side of \eqref{eq:FJMxxx1}.
First,
\[
dF = df \oplus \hn_f - t\, (df\circ S_f\oplus 0).
\]

Denote
\[
A = (\O(df) \oplus \hn_f) - t\, (\O(df)\circ S\oplus 0),
\]
where $\O(df)$ is the projection of $df$ on $O(\g,\euc)$ (for immersions the projection is unique).
We show that $A\in \SO(G,\euc)$. Indeed,
For $(q,t)\in\M_h$ and  $(v,s)\in T_{(q,t)}\M_h$,
\[
A(v,s) = \O(df)(v) + s\, \hn_f - t\, O(df)\circ S(v).
\]
Since $\O(df)(v) \perp \hn_f$ and since $\O(df)\in \O(\g,\euc)$,
\[
|A(v,s)|^2 = |\O(df)(v)  - t\, O(df)\circ S(v)|^2 + s^2 = |v - t\,S(v)|^2 + s^2 = |(v,s)|^2.
\]
The condition on the orientation holds by the very definition of $\hn_f$. 

Thus,
\[
\begin{split}
\dist(dF,\SO(G,\euc)) &\le |dF - A| \\
&\le |(df - \O(df)) \oplus \hn_f - t\, ((df\circ S_f - \O(df)\circ S)\oplus 0)| \\
&\le |(df - \O(df)) \oplus \hn_f| + t\, |(df\circ S_f - \O(df)\circ S)\oplus 0|,
\end{split}
\]
where all the norms are with respect to $G$ and $\euc$. By the equivalence between $G$ and the product metric $\tilde{G} = \g + dt\otimes dt$, the right hand side can be bounded by
\[
C  \brk{|df - \O(df)| + |df\circ S_f - \O(df)\circ S|},
\]
where $C$ depends only on $\g$, $S$ and $h$. 
Using once again the equivalence of the metrics on $\M_h$,
\beq
\|\dist(dF,\SO(G,\euc))\|_{L^p(\M_h)} \le C\brk{\|\dist(df,\O(\g,\euc))\|_{L^p(\M)} + \|df\circ S_f - \O(df)\circ S\|_{L^p(\M)}}.
\label{eq:FJMxxx2}
\eeq
for some constant $C$ depending only on $(\M,\g)$ and $p$.

We proceed with the left-hand side of \eqref{eq:FJMxxx1}.
We have
\[
\begin{split}
Q\circ d\Phi &= Q\circ(d\iota \oplus \hn_\iota) - t\, Q\circ (d\iota\circ S\oplus 0) \\
&=d(Q\circ \iota) \oplus \hn_{Q\circ \iota} - t\, (d(Q\circ\iota)\circ S\oplus 0),
\end{split}
\]
hence
\[
\begin{split}
dF - Q\circ d\Phi 
	&= df \oplus \hn_f - df_0 \oplus \hn_{f_0} - t\, \brk{(df\circ S_f  - df_0 \circ S_{f_0})\oplus 0}\\
	&= df \oplus \hn_f - df_0 \oplus \hn_{f_0} - t\, \brk{(d\hn_f  - d\hn_{f_0})\oplus 0},
\end{split}
\]
where $f_0 = Q\circ \iota$, which is an isometric immersion of $(\M,\g)$ having shape operator $S_{f_0} = S$. 

Therefore,
\[
\begin{split}
|dF - Q\circ d\Phi|^2 &\gtrsim |df- df_0|^2 + |\hn_f -  \hn_{f_0}|^2 
+ t^2|d\hn_f  - d\hn_{f_0}|^2  \\
&\quad + 2t (df- df_0, d\hn_f  - d\hn_{f_0})_\g. \\
\end{split}
\]
Define the set 
\[
A_+=\{q\in \M ~:~ ((df)_q- (df_0)_q, (d\hn_f)_q  - (d\hn_{f_0})_q)_\g \ge 0 \} \subset \M
\]
and let
\[
D = \{(q,t)\in \M_\h ~:~ q\in A_+\,\,\, t\ge 0\} 
\cup  \{(q,t)\in \M_\h ~:~ q\not\in A_+ \,\,\, t\le 0\}  \subset \M_h.
\]
On the set $D$,
\[
|dF - Q\circ d\Phi|^2 \gtrsim |df- df_0|^2 + |\hn_f -  \hn_{f_0}|^2 + t^2|d\hn_f  - d\hn_{f_0}|^2,
\]
and thus
\[
\begin{split}
\int_{\M_h} |dF - Q\circ d\Phi|^p \,\dVol_G 
&\ge \int_{D} |dF - Q\circ d\Phi|^p \,\dVol_G \\
&\gtrsim  \int_D \brk{|df- df_0|^p + |\hn_f -  \hn_{f_0}|^p + |t|^p|d\hn_f  - d\hn_{f_0}|^p} \,\dVol_G \\
&\gtrsim  \int_D \brk{|df- df_0|^p + |\hn_f -  \hn_{f_0}|^p + |t|^p|d\hn_f  - d\hn_{f_0}|^p} \,\dVol_{\tilde{G}} \\
&= \int_\M \brk{h|df- df_0|^p + h|\hn_f -  \hn_{f_0}|^p + \frac{h^{p+1}}{p+1}|d\hn_f  - d\hn_{f_0}|^2} \,\dVol_{\g},
\end{split}
\]
where in the passage to the third line we used the equivalence of $G$ and the product metric $\tilde{G}$, and in the passage to the last line we used Fubini's theorem.
From this and the Poincar\'e inequality (possibly by translating $f_0$) we obtain
\beq\label{eq:FJMxxx3}
\|dF - Q\circ d\Phi\|_{L^p(\M_h;\R^{d+1})} \ge c\brk{ \|f - f_0\|_{W^{1,p}(\M;\R^{d+1})} + \|\hn_f - \hn_{f_0}\|_{W^{1,p}(\M;\R^{d+1})} },
\eeq
where $c>0$ depends only on $(\M,\g)$, $p$ and $h$.
Combining \eqref{eq:FJMxxx1}, \eqref{eq:FJMxxx2} and \eqref{eq:FJMxxx3}, we obtain the desired result.
\end{proof}
 
\footnotesize{

\providecommand{\bysame}{\leavevmode\hbox to3em{\hrulefill}\thinspace}
\providecommand{\MR}{\relax\ifhmode\unskip\space\fi MR }
\providecommand{\MRhref}[2]{%
  \href{http://www.ams.org/mathscinet-getitem?mr=#1}{#2}
}
\providecommand{\href}[2]{#2}

}

\end{document}